\definecolor{linkblue}{RGB}{1,1,190}
\definecolor{citegreen}{RGB}{1,190,1}
\theoremstyle{plain}
\newtheorem{theorem}{Theorem}[section]
\newtheorem{corollary}[theorem]{Corollary}
\newtheorem{lemma}[theorem]{Lemma}
\newtheorem{proposition}[theorem]{Proposition}
\theoremstyle{definition}
\newtheorem{definition}[theorem]{Definition}
\newtheorem{remark}[theorem]{Remark}
\newcommand{\subref}[1]{\dosubref\cref#1\relax}
\newcommand{\Subref}[1]{\dosubref\Cref#1\relax}
\def\dosubref#1#2:#3\relax{#1{#2}\ref{#2:#3}}
\crefname{pluralequation}{Equations}{Equations}
\Crefname{pluralequation}{Equations}{Equations}
\numberwithin{equation}{section} % equations numbered within each section
\setlist[enumerate,1]{label=\textup{(\arabic*)}, ref=\textup{(}\arabic*\textup{)}, leftmargin=0.75cm}
\setlist[enumerate,2]{label=\textup{(\roman*)}, ref=\textup{(}\roman*\textup{)}, leftmargin=0.5cm}
\setlist[enumerate,3]{label=\textup{(\Alph*)}, ref=\textup{(}\Alph*\textup{)}, leftmargin=0.5cm}
\setlist[itemize,1]{leftmargin=0.75cm}
\newlist{equivenumerate}{enumerate}{1}
\setlist[equivenumerate,1]{label=\textup{(\alph*)},ref=\textup{(\alph*)},leftmargin=0.75cm}
\newcommand{\bN}{\mathbb N}
\newcommand{\bQ}{\mathbb Q}
\newcommand{\bR}{\mathbb R}
\newcommand{\bZ}{\mathbb Z}
\newcommand{\cA}{\mathcal A}
\newcommand{\cR}{\mathcal R}
\newcommand{\cU}{\mathcal U}
\newcommand{\fp}{\mathfrak p}
\renewcommand{\sc}{\mathsf c}   % This ovverides \sc = smallcaps
\newcommand{\sd}{\mathsf d}
\newcommand{\sv}{\mathsf v}
\newcommand{\sD}{\mathsf D}
\newcommand{\sL}{\mathsf L}
\newcommand{\sZ}{\mathsf Z}
\newcommand{\val}{\sv}
\newcommand{\fmon}[1]{\mathcal F^*({#1})}
\DeclareMathOperator{\spec}{spec}
\DeclareMathOperator{\adj}{adj}
\DeclareMathOperator{\chr}{char}
\DeclareMathOperator{\nr}{nr}
\DeclareMathOperator{\tr}{tr}
\DeclareMathOperator{\rad}{rad}
\DeclarePairedDelimiter{\length}{\lvert}{\rvert}
\DeclarePairedDelimiter{\abs}{\lvert}{\rvert}
\DeclarePairedDelimiter{\card}{\lvert}{\rvert}
\newcommand{\quat}[1]{{\mathsf{#1}}} % Formatting for quaternion basis elements (i,j,k); Use different font to avoid collision with indices i, j, k
\newcommand{\res}{\mathbb k}     % symbol for residue field of the DVR d
\def\rfop{*}
\newcommand\rigidfactorization[2][]{%
  \def\rf@delim{\rfop}
  \newif\ifrf@notfirst
  #1
  \@for\next:=#2\do{%
    \ifrf@notfirst
      \rf@delim
    \fi
    \rf@notfirsttrue
    \next
  }%
}
\newcommand\rf\rigidfactorization
\newenvironment{claim}[1][]{%
\begin{description}[leftmargin=0pt]%
\ifthenelse{\isempty{#1}}{\item[Claim]}{\item[Claim {#1}]}
}%
{\end{description}}
\begin{document}

\title{Arithmetical invariants of local quaternion orders}

\author[N.~Baeth]{Nicholas R.~Baeth}
\author[D.~Smertnig]{Daniel Smertnig}

\address{Department of Mathematics\\
         Franklin \& Marshall College \\
         Lancaster, PA, 17604, USA}
\email{nicholas.baeth@fandm.edu}
\address{University of Graz\\
         NAWI Graz\\
         Institute for Mathematics and Scientific Computing\\
         Heinrichstra\ss e 36\\
         8010 Graz, Austria}
\email{daniel.smertnig@uni-graz.at}
\thanks{Much of this work was completed when the first author was employed at the University of Central Missouri. The second author was supported by the Austrian Science Fund (FWF) project P26036-N26.}

\keywords{elasticity, factorization theory, quaternion orders}

\subjclass[2010]{16H10, 11R27, 11S45}

\begin{abstract}
  Let $D$ be a DVR, let $K$ be its quotient field, and let $R$ be a $D$-order in a quaternion algebra $A$ over $K$.
  The \emph{elasticity} of $R^\bullet$ is $\rho(R^\bullet) = \sup\{\, k/l : u_1\cdots u_k = v_1 \cdots v_l \text{ with $u_i$, $v_j$ atoms of  $R^\bullet$ and $k$, $l \ge 1$} \,\}$ and is one of the basic arithmetical invariants that is studied in factorization theory.
  We characterize finiteness of $\rho(R^\bullet)$ and show that the set of distances $\Delta(R^\bullet)$ and all catenary degrees $\sc_\sd(R^\bullet)$ are finite.
  In the setting of noncommutative orders in central simple algebras, such results have only been understood for hereditary orders and for a few individual examples.
\end{abstract}

\maketitle

\section{Introduction}

If $R$ is a Noetherian ring, every non-zero-divisor $a \in R^\bullet$ can be written as a (finite) product of atoms (irreducible elements).
In general, such a factorization is not unique, and arithmetical invariants are used to describe this non-uniqueness.
If $\sL(a) = \{\, k \in \bN_0 : a = u_1\cdots u_k \text{ with } u_1, \ldots\,, u_k \text{ atoms} \,\}$ is the \emph{set of lengths} of $a$, then $\rho(a) = \sup \sL(a)/\min \sL(a) \in \bQ_{\ge 1} \cup \{\infty\}$ is the \emph{elasticity} of $a$ (where we set $\rho(a)=1$ for $a \in R^\times$).
The elasticity of $R^\bullet$ is then $\rho(R^\bullet) = \sup\{\, \rho(a) : a \in R^\bullet \,\} \in \bR_{\ge 1} \cup \{\infty\}$.
The study of arithmetical invariants, such as sets of lengths and elasticities, is part of factorization theory, a field which has been well-developed in the commutative setting (see \cite{Anderson,Chapman,GHK06,Fontana-Houston-Lucas,Chapman-Fontana-Geroldinger-Olberding,Geroldinger16} for recent monographs and surveys) and which has been recently extended to noncommutative settings (see \cite{BBG,Baeth-Smertnig,Smertnig16b,Smertnig17,Baeth-Jeffries,Bell-Heinle-Levandovskyy} for recent results).

In \emph{transfer Krull monoids of finite type} (as defined in \cite{Geroldinger16}), arithmetical invariants are always finite and can be expressed in terms of certain combinatorial invariants of an (abelian) class group associated to $R^\bullet$.
This holds in particular when $R$ is a ring of algebraic integers in a number field, where the class group is the usual one.
More recently, these results have been extended to a noncommutative setting: If $R$ is a hereditary order in a central simple algebra over a number field, then $R^\bullet$ is a transfer Krull monoid as well (see \cite{Smertnig17} or \cite{Smertnig,Baeth-Smertnig} for the special case of maximal orders).
Here, the class group is isomorphic to a ray class group of the center.
The proofs of these results proceed via multiplicative ideal theory, respectively, a structure theory for finitely generated projective modules over such rings.

In the commutative setting, non-hereditary (or, what is in this case equivalent, non-maximal) orders have been studied as well.
In particular, if $R$ is an order in a number field and $\overline R$ is its integral closure, then $\rho(R^\bullet) < \infty$ if and only if the map $\spec(\overline R) \to \spec(R)$ given by $\fp \mapsto \fp \cap R$ is bijective.
This can be proved by passing to localizations (using the notion of \emph{defining systems}), and studying the local situation, where $R_\fp^\bullet$ (for $\fp \in \spec(R)$) turns out to be a \emph{finitely primary monoid}.
(See \cite[Theorem 3.7.1]{GHK06} or see \cite{Kainrath} for a vast generalization.)

As orders in number fields have provided the nucleus for the development of the commutative theory, their natural noncommutative analogues, orders in central simple algebras over number fields, provide a good starting point and benchmark for the development of a noncommutative theory.
Since hereditary orders have been dealt with, the next step is to consider non-hereditary orders.

In this paper, we consider the simplest of these cases: Let $D$ be a discrete valuation ring, let $K$ be its quotient field, let $A$ be a quaternion algebra over $K$, and let $R$ be a $D$-order in $A$ (in particular, we restrict to the local case).

The main result of the present paper is as follows.
The definitions of the arithmetical invariants can be found in \cref{s-background}.
For a discussion of the Structure Theorem for Unions of Sets of Lengths and the definition of almost arithmetical progressions (AAPs) see \cite[Theorem 2.6 and Definition 2.5]{Geroldinger16}.
See also \cite{Fan-Geroldinger-Kainrath-Tringali}, where Fan, Geroldinger, Kainrath, and Tringali characterize when the Structure Theorem for Unions of Sets of Lengths holds.

\begin{theorem} \label{t-main}
  Let $D$ be a discrete valuation ring, let $K$ be its quotient field, and let $A$ be a central simple $K$-algebra.
  Let $R$ be a $D$-order in $A$.
  Suppose that the completion $\widehat A\cong A \otimes_K \widehat K$ is either a division ring or that $\widehat A \cong M_2(\widehat K)$.
  If $R$ is hereditary, then $\rho(R^\bullet)=1$, $\Delta(R^\bullet)=\emptyset$, and $\sc_\sd(R^\bullet) \le 2$ for any distance $\sd$ on $R^\bullet$.
  If $R$ is not hereditary, then
  \begin{enumerate}
  \item\label{t-main:rho} $\rho(R^\bullet) < \infty$ if and only if $\widehat A$ is a division ring.
  \item\label{t-main:delta} $\card{\Delta(R^\bullet)} < \infty$ and $\sc_\sd(R^\bullet) < \infty$ for any distance $\sd$ on $R^\bullet$.
  \item\label{t-main:structure} The Structure Theorem for Unions of Sets of Lengths holds for $R^\bullet$.
    Thus, if $\Delta(R^{\bullet}) \ne \emptyset$, there exist constants $k^*$ and $M^* \in \bN$ such that for all $k \ge k^*$, the union of sets of lengths $\cU_k(R^\bullet)$ is an AAP with difference $\min\Delta(R^\bullet)$ and bound $M^*$.
    \textup{(}If $\Delta(R^\bullet) = \emptyset$, then $\cU_k(R^\bullet)=\{k\}$.\textup{)}
  \end{enumerate}
  If $R$ is an Eichler order of level $n\ge 2$, then $\sc_\sd(R^\bullet) \le n+6$, $\min\Delta(R^\bullet)=1$, and $\max \Delta(R^\bullet)\le n+4$.
  In particular, $\cU_k(R^\bullet)=\bN_{\ge 2}$ for $k \ge 2$.
\end{theorem}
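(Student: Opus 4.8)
The plan is to pass to a concrete local model and then convert each assertion into a combinatorial statement about factorizations there. First I would replace $R$ by its $\pi$-adic completion $\widehat R$: this is again an Eichler order of level $n$, now in $M_2(\widehat K)$, and after conjugation it becomes the explicit order $\mathcal O = \{\, M \in M_2(\widehat D) : M_{21} \in \pi^n \widehat D \,\}$. It lies inside the maximal order $\Lambda = M_2(\widehat D)$, with $\mathcal O = \Lambda_0 \cap \Lambda_n$ for the geodesic $\Lambda = \Lambda_0, \Lambda_1, \dots, \Lambda_n$ of maximal orders in the Bruhat--Tits tree, and has conductor $\mathfrak f = \pi^n \Lambda$ in $\Lambda$. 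Since the invariants in question are unchanged under this completion --- via a transfer homomorphism $R^\bullet \to \widehat R^\bullet$, exactly as in the commutative local analysis recalled in the introduction --- it suffices to treat $\mathcal O^\bullet$, and I drop the hats.

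Second, I would set up the combinatorial model. Because $\mathcal O \subseteq \Lambda$ and the reduced norm $\nr = \det$ is multiplicative with $v(\det u) = 1$ on every atom $u$ of the hereditary order $\Lambda$, each factorization of $a \in \mathcal O^\bullet$ into atoms of $\mathcal O$ refines --- upon factoring the atoms inside $\Lambda$ --- to a factorization of $a$ into $v(\det a)$ atoms of $\Lambda$; conversely an $\mathcal O$-factorization is recovered from a $\Lambda$-factorization $a = w_1 \cdots w_m$ by a choice of ``cuts'' $0 = i_0 < i_1 < \cdots < i_k = m$ for which every block $w_{i_{j-1}+1} \cdots w_{i_j}$ is an atom of $\mathcal O$. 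Recording the position of the running lattice relative to the length-$n$ segment $\Lambda_0, \dots, \Lambda_n$ and to $\mathfrak f$ attaches to every position a ``state'' in $\{0, 1, \dots, n\}$, and the admissible cuts together with the possible shapes of $\mathcal O$-atoms are dictated by this finite-state bookkeeping; in particular $\mathcal O$ has atoms of unbounded reduced norm valuation. Since $\Lambda$ is hereditary, $\Lambda^\bullet$ is half-factorial with $\sc_\sd(\Lambda^\bullet) \le 2$ (the hereditary case of the theorem), so all genuinely new behaviour occurs at the level of re-blocking, controlled by $n$.

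Third, I would extract the invariants. The centerpiece is the bound $\sc_\sd(\mathcal O^\bullet) \le n+6$: given two factorizations of a common $a$, first refine both to $\Lambda$-factorizations and connect those through $\Lambda$-factorizations at pairwise distance $\le 2$ (half-factoriality of $\Lambda$), and then link any two admissible blockings of a fixed $\Lambda$-factorization by ``small'' moves --- merging two consecutive blocks, splitting a block, or sliding a single cut --- each of which, in the worst case where the state has to run to one end of the segment and back, changes the factorization by at most $n$, with a bounded amount of slack for the endpoints and $\mathfrak f$; optimizing the accounting gives $n+6$ (respectively $n+4$). From $\sc_\sd(\mathcal O^\bullet) \le n+6$ together with the standard inequality $2 + \sup \Delta(\mathcal O^\bullet) \le \sc(\mathcal O^\bullet)$ one obtains $\max\Delta(\mathcal O^\bullet) \le n+4$. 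For $\min\Delta(\mathcal O^\bullet) = 1$ and the last assertion I would produce elements realizing long initial intervals of lengths: a ``long'' atom $u$ of reduced norm valuation $m$ pairs with a complementary atom $u^\ast = \pi^m u^{-1} \in \mathcal O$ with $u u^\ast = \pi^m I$, and, combined with central powers and short atoms, one obtains a family $a_m$ with $\min \sL(a_m) = 2$ and $\{2, 3, \dots, m\} \subseteq \sL(a_m)$ for all $m$; since $\Delta(\mathcal O^\bullet) \ne \emptyset$ (as $\rho(\mathcal O^\bullet) = \infty$) the consecutive lengths $2$ and $3$ give $\min\Delta = 1$, and letting $m \to \infty$ yields $\{2, \dots, m\} \subseteq \cU_k(\mathcal O^\bullet)$ for every $k \ge 2$; since $1 \in \sL(b)$ forces $\sL(b) = \{1\}$, this gives $\cU_k(\mathcal O^\bullet) = \bN_{\ge 2}$.

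The step I expect to be the main obstacle is the catenary bound, and within it the re-blocking analysis. A single atom of $\mathcal O$ can occupy a $\Lambda$-subword of length comparable to $2n$, so one cannot swap one block structure for another in a single cheap step; instead one must produce, for any two admissible blockings of a $\Lambda$-factorization, an interpolating sequence of blockings in which each step changes the factorization by a bounded amount and respects the state constraints throughout, and then verify that the constants that emerge are exactly $n+6$ and $n+4$ rather than some larger bound linear in $n$. This requires a careful classification of the $\mathcal O$-atoms by their state profile, and an analysis of precisely when two consecutive such atoms may be merged and when a long atom may be split --- which is where the bulk of the technical work lies, including the construction of the family $a_m$ used for the last assertion.
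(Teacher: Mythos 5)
Your proposal addresses only one of the three cases that the theorem covers. The statement concerns an arbitrary non-hereditary $D$-order $R$ with $\widehat A$ a division ring or $\widehat A\cong M_2(\widehat K)$; the Eichler case is only the final refinement. You say nothing about the case where $\widehat A$ is a division ring (where the paper extends the valuation of $\widehat K$ to $\widehat A$, finds $n$ with $\gamma^nS\subset R$, and deduces that every atom left-divides any product of $3n-1$ atoms, giving finiteness of $\rho$, $\Delta$, and $\sc_\sd$ as in \cref{t-div}), and nothing about non-Eichler orders in $M_2(K)$ --- which is where the bulk of the paper's work lies: one must show $\rho(R^\bullet)=\infty$ and $\min\sL$ bounded there too, and the paper does this via the parametrization $R\cong C_0(M,q)$ by ternary quadratic forms, producing $z\in J(R)\setminus J(R)^2$ with $\nr(z)=0$ and hence atoms $\pi^k+z$ of unbounded norm valuation (\cref{t-noneichler}). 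Without that case, claims \labelcref{t-main:rho,t-main:delta,t-main:structure} are unproved for a whole class of orders, and there is no obvious way to reduce a non-Eichler order to your $\Lambda$-and-cuts model since such $R$ need not sit inside a single nicely positioned maximal order with controllable conductor.

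Within the Eichler case your overall strategy (complete, then compare $\mathcal O$-factorizations with $\Lambda$-factorizations) is reasonable in spirit but the decisive step is missing, as you yourself flag. The claim that whether a block of $\Lambda$-atoms is an atom of $\mathcal O$ is ``dictated by finite-state bookkeeping'' with states in $\{0,\dots,n\}$ is not substantiated and is the entire difficulty: the paper instead classifies the atoms explicitly (\cref{t-atom}, \cref{c-atomr}), and the classification of type \ref{t-atom:ii} atoms involves congruence data modulo $\pi^{\val(\det U)}$, which is unbounded. The paper's route to $\sc_\sd\le n+6$ is different and more robust: it proves that every $A\notin J(R)$ has a \emph{unique} rigid factorization and every $A\in J(R)$ satisfies $\min\sL(A)\le n+5$ (\cref{t-lengths}), and then a general lemma about ideals (\cref{p-shortcat}, using that $R/J(R)\cong\res\oplus\res$ so that a product of non-units landing in $J(R)$ has a short subproduct in $J(R)$) yields $\sc_\sd\le N+1$ and $\max\Delta\le N-1$ with $N=n+5$. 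Your re-blocking interpolation would have to reprove something equivalent, and nothing in the proposal shows the constants come out to $n+6$ and $n+4$ rather than some larger multiple of $n$. Your constructions for $\min\Delta=1$ and for $\cU_k=\bN_{\ge2}$ (long atoms $u$ with $u\,\overline{u}=\pi^m$) do match the paper's, and the transfer to the completion is a genuine step the paper proves via the bijection of right ideals (\cref{p-strongtransfer}); citing it is acceptable, but the two substantive gaps above mean the proposal does not establish the theorem.
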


In this paper we deal with non-hereditary orders, as hereditary orders have been treated before.
If $R$ is a hereditary $D$-order in a central simple algebra (where $D$ is a DVR), then $R$ is a semilocal hereditary Noetherian prime ring.
In this case stable isomorphism of finitely generated projective $R$-modules implies isomorphism, and $R$ has trivial ideal class group (see \cite[\S40]{LevyRobson11}).
As a consequence of \cite[Theorem 4.4]{Smertnig17}, we then have $\rho(R^\bullet) = 1$, $\Delta(R^\bullet)=\emptyset$, and $\sc_\sd(R^\bullet) \le 2$.
Moreover, more precise information about unique factorization is available in the form of \cite[Proposition 4.12]{Smertnig17}.
(In this case, $\rho(R^\bullet)=1$ and $\Delta(R^\bullet)=\emptyset$ can also be derived using \cite{Estes}.)

Together with the results on hereditary orders, \cref{t-main} completely characterizes the finiteness of the stated arithmetical invariants for orders in quaternion algebras ($\dim_K A=4$) over the quotient field of a DVR.
In particular, if $R$ is such an order, then $\rho(R^\bullet) < \infty$ if and only if $R$ is hereditary or $\widehat A$ is a division ring.

\begin{remark}\label{r-largemindelta}
We have noted in \cref{t-main} that $\min\Delta(R^\bullet)=1$ if $R$ is an Eichler order. In fact, the authors know of no examples where $\min\Delta(R^\bullet)>1$, even when $R$ is not Eichler. It would be interesting to find such an example, if one exists.
\end{remark}

In the proof we distinguish three separate cases: Where the completion $\widehat A$ is a division ring, where $\widehat A \cong M_2(\widehat K)$ and $\widehat R$ is an Eichler order, and where $\widehat A \cong M_2(\widehat K)$ and $\widehat R$ is a non-Eichler order.
The first case is the simplest, running essentially parallel to the commutative one for finitely primary monoids of rank $1$ (in fact, here it is not necessary to require $\dim_K A =4$).
The second case is dealt with by explicit computations (here we also have the most explicit bounds on the invariants and are able to provide a complete classification of the atoms).
In the third and final case we make use of a parametrization of quaternion orders by means of ternary quadratic forms (a special case of \cite{Voight11}).

The paper is structured accordingly: After recalling some necessary background, we study the three cases as outlined above in \cref{s-division,s-eichler,s-noneichler}.
In \cref{s-eichler,s-noneichler} we assume at first that $D$ is complete.
In the final section, \cref{s-wrapup}, we tie everything together, culminating in the proof of \cref{t-main}.
The passage to the non-complete case is dealt with by constructing a transfer homomorphism to the completion.
This also allows us to apply the conclusions of \cref{s-eichler,s-noneichler} to the case when $D$ is not complete.

We thank the referee for a careful reading of a previous version of this manuscript and, in particular, for pointing out the questions in \cref{r-largemindelta} and \cref{r-finitelyprimarymonoid}.

\section{Background}
\label{s-background}

By $\bN_0$ we denote the set of nonnegative integers, and by $\bN$ the set of positive integers.
If $a$,~$b \in \bZ$ we denote by $[a,b]=\{\, x \in \bZ : a \le x \le b \,\}$ a discrete interval.
The symbol $\subset$ denotes an inclusion of sets that is not necessarily proper.
By a \emph{monoid} we mean a cancellative semigroup with identity.
If $H$ is a multiplicative semigroup with identity, we denote by $H^\bullet$ its monoid of cancellative elements, and by $H^\times$ its unit group.
A discrete valuation ring (DVR) is a commutative principal ideal domain with a unique non-zero prime ideal.
If $D$ is a DVR, we will always write $\pi \in D$ for a prime element.

\subsection{Factorizations and arithmetical invariants}

Let $H$ be a monoid.
An element $u \in H$ is an \emph{atom} if $u=ab$ with $a$,~$b \in H$ implies $a \in H^\times$ or $b \in H^\times$.
The set of atoms of $H$ is denoted by $\cA(H)$.
The monoid $H$ is \emph{atomic} if every non-unit can be expressed as a product of finitely many atoms.
Two elements $a$,~$b \in H$ are \emph{associated} if there exist $\varepsilon$, $\delta \in H^\times$ such that $a = \varepsilon b \delta$, and \emph{right associated} [\emph{left associated}] if $a=b\varepsilon$ [$a=\varepsilon b$].

Denote by $\fmon{\cA(H)}$ the free monoid with basis $\cA(H)$.
On the cartesian product $H^\times \times \fmon{\cA(H)}$ we define the following operation:
If $(\varepsilon,y)$,~$(\varepsilon',y') \in H^\times \times \fmon{\cA(H)}$ with $y=u_1\cdots u_k$ and $y'=v_1\cdots v_l$ where $u_1$, $\ldots\,$, $u_k$, $v_1$, $\ldots\,$,~$v_l \in \cA(H)$, then
\[
(\varepsilon,y)(\varepsilon',y') =
\begin{cases} (\varepsilon,u_1\cdots (u_k\varepsilon')v_1\cdots v_l) &\text{if $k >0$,} \\
              (\varepsilon\varepsilon', v_1\cdots v_l)               &\text{if $k=0$}.
\end{cases}
\]
With this product, $H^\times \times \fmon{\cA(H)}$ is a monoid.
On $H^\times \times \fmon{\cA(H)}$, we define a congruence $\sim$ by $(\varepsilon,y)\sim(\varepsilon',y')$ if all of the following hold:
\begin{itemize}
  \item $k=l$,
  \item $\varepsilon u_1\cdots u_k=\varepsilon' v_1\cdots v_k$ as product in $H$,
  \item either $k=0$, or there exist $\delta_2$, $\ldots\,$,~$\delta_k \in H^\times$ and $\delta_{k+1}=1$ such that
    \[
    \varepsilon' v_1 = \varepsilon u_1 \delta_2^{-1} \quad\text{and}\quad v_i = \delta_i u_i \delta_{i+1}^{-1} \quad\text{for all $i \in [2,k]$.}
    \]
\end{itemize}
\begin{definition}
  Let $H$ be a monoid.
  The quotient $\sZ^*(H) = H^\times \times \fmon{\cA(H)}/\sim$ is the \emph{monoid of \textup{(}rigid\textup{)} factorizations of $H$}.
  The class of $(\varepsilon,u_1\cdots u_k)$ in $\sZ^*(H)$ is denoted by $\rf[\varepsilon]{u_1,\cdots,u_k}$.
  The symbol $\rfop$ also denotes the operation on $\sZ^*(H)$.
  There is a natural homomorphism
  \[
  \pi=\pi_H \colon \sZ^*(H) \to H, \quad \rf[\varepsilon]{u_1,\cdots,u_k} \mapsto \varepsilon u_1\cdots u_k.
  \]
  For $a \in H$, the set $\sZ^*(a)=\sZ_H^*(a) = \pi^{-1}(a)$ is the set of \emph{\textup{(}rigid\textup{)} factorizations of $a$}.
  If $z=\rf[\varepsilon]{u_1,\cdots,u_k}$, then $\length{z}=k$ is the \emph{length} of $z$.
\end{definition}

By construction, $\rf[\varepsilon]{u_1,\cdots,u_{i},u_{i+1},\cdots,u_k}=\rf[\varepsilon]{u_1,\cdots,u_i\delta^{-1},\delta u_{i+1},\cdots,u_k}$ for all $\delta \in H^\times$ and for all $i \in [1,k-1]$.
Similarly, $\rf[\varepsilon]{u_1,\cdots,u_k} = \rf[1]{(\varepsilon u_1),\cdots,u_k}$ if $k \ge 1$.
In particular, as long as $k \ge 1$ (equivalently, $\pi(z) \not\in H^\times$), we may represent $z$ as $z=\rf{u_1',\cdots,u_k'}$ with atoms $u_1'$, $\ldots\,$,~$u_k'$ and omit the unit at the beginning.

We now define a number of arithmetical invariants, based on the lengths of factorizations alone.
More background on these arithmetical invariants can be found in the recent survey \cite{Geroldinger16}.

\begin{definition}
  Let $H$ be an atomic monoid and let $a \in H$.
  \begin{enumerate}
  \item $\sL(a)=\sL_H(a)=\{\, \length{z} : z \in \sZ_H^*(a) \,\} \subset \bN_0$ is the \emph{set of lengths} of $a$.
  \item $H$ is \emph{half-factorial} if $\card{\sL(a)}=1$ for all $a \in H$.
  \item A natural number $d \in \bN$ is a \emph{distance} of $a$ if there exist $k$,~$l \in \sL(a)$ with $l-k=d$ and $\sL(a)\cap [k,l]=\{k,l\}$.
    By $\Delta(a) \subset \bN$ we denote the \emph{set of distances} of $a$, and by
    \[
      \Delta(H) = \bigcup_{a \in H} \Delta(a)
    \]
    the \emph{set of distances} of $H$.
  \item For $k \in \bN$ we define
    \[
      \cU_k(H) = \bigcup_{\substack{a \in H\\ k \in \sL(a)}} \sL(a),
    \]
    the \emph{union of sets of lengths} containing $k$.
  \item For $k \in \bN$ the \emph{$k$-th elasticity} is $\rho_k(H) = \sup\, \cU_k(H) \in \bN \cup \{\infty\}$.
  \item The \emph{elasticity} of $a$ is
    \[
      \rho(a) = \frac{\sup \sL(a)}{\min \sL(a)} \in \bQ_{\ge 1} \cup \{\infty\}
    \]
    if $a \not \in H^\times$, and $\rho(a)=1$ if $a \in H^\times$.
    The \emph{elasticity} of $H$ is then $\rho(H) = \sup\{\, \rho(a) : a \in H \,\} \in \bR_{\ge 1} \cup \{\infty\}$.
  \end{enumerate}
\end{definition}

With these definitions, we have
\[
\rho(H) = \sup\Big\{\, \frac{\rho_k(H)}{k} \,:\,  k \in \bN \,\Big\} = \lim_{k\to \infty} \frac{\rho_k(H)}{k}.
\]

An \emph{almost arithmetical progression} (AAP) with difference $d\in \mathbb N$ and bound $M\in \mathbb N_0$ is a subset $L\subset \bN_0$ with $L\subset \min L+d\mathbb Z$ and
\[
  L\cap [\min L + M, \sup L-M]= (\min L + d \bZ) \cap [\min L + M, \sup L - M].
\]
We say that the \emph{Structure Theorem for Unions of Sets of Lengths} holds for $H$ if there are constants $d$, $k^\ast$ and $M$ such that for all $k\geq k^\ast$ every $\cU_k(H)$ is an AAP with distance $d$ and bound $M$. Again, we refer to \cite{Geroldinger16} for additional background.

Often it is easier to study sets of lengths in an alternate monoid and then pull back arithmetical information via an appropriate monoid homomorphism. A \emph{transfer homomorphism} $\varphi\colon H \to T$ of two monoids $H$, $T$ is a homomorphism satisfying:
\begin{itemize}
\item $T = T^\times \varphi(H) T^\times$ and $\varphi^{-1}(T^\times) = H^\times$.
\item If $a \in H$, $s$, $t \in T$ and $\varphi(a)=st$, then there exist $b$,~$c \in H$ and $\varepsilon \in T^\times$ such that $a=bc$, $\varphi(a) = s\varepsilon^{-1}$, and $\varphi(b) = \varepsilon t$.
\end{itemize}
A transfer homomorphism $\varphi$ is \emph{isoatomic} if, whenever $u$,~$v \in \cA(H)$ are such that $\varphi(u)$ and $\varphi(v)$ are (two-sided) associated in $T$, then already $u$ and $v$ are associated in $H$.
Transfer homomorphisms allow one to lift many factorization theoretical properties from $T$ to $H$.
In particular, $\sL_H(a) = \sL_T(\varphi(a))$ for all $a \in H$.
See \cite{Baeth-Smertnig}.

To introduce a more refined invariant, namely the catenary degree, we first need the notion of a distance between two factorizations.
(Note that the term `distance' in the following definition is unrelated to the one for the set of distances $\Delta(H)$.)

\begin{definition} \label{d-distance}
  Let $H$ be a monoid.
  Let $D = \{\, (z,z') \in \sZ^*(H) \times \sZ^*(H) : \pi(z)=\pi(z') \,\}$.
  A \emph{distance on $H$} is a map $\sd\colon D \to \bN_0$ satisfying each of the following properties for all $z$,~$z'$,~$z'' \in \sZ^*(H)$ with $\pi(z)=\pi(z')=\pi(z'')$ and all $x \in \sZ^*(H)$:
  \begin{enumerate}[label=\textup{(\textbf{D\arabic*})},ref=\textup{(D\arabic*)},leftmargin=*]
    \item\label{d:ref} $\sd(z,z) = 0$.
    \item\label{d:sym} $\sd(z,z') = \sd(z',z)$.
    \item\label{d:tri} $\sd(z,z') \le \sd(z,z'') + \sd(z'',z')$.
    \item\label{d:mul} $\sd(x\rfop z, x \rfop z') = \sd(z,z') = \sd(z \rfop x, z' \rfop x)$.
    \item\label{d:len} $\abs[\big]{\length{z} - \length{z'}} \le \sd(z,z') \le \max\left\{ \length{z}, \length{z'}, 1 \right\}$.
  \end{enumerate}
\end{definition}

This notion of a distance was introduced in \cite[\S3]{Baeth-Smertnig}, where several examples of distances can be found.
For any distance we can now define a corresponding catenary degree.

\begin{definition}[Catenary degree]
  Let $H$ be an atomic monoid and let $\sd$ be a distance on $H$.
  \begin{enumerate}
    \item
      Let $a \in H$ and $z$,~$z' \in \sZ^*(a)$.
      A finite sequence of rigid factorizations $z_0$, $\ldots\,$,~$z_n$ of $a$ is called an \emph{$N$-chain} (in distance $\sd$) between $z$ and $z'$ if
      \[
      z=z_0, \quad \sd(z_{i-1},z_i) \le N\text{ for all }i\in [1,n], \quad \text{and }z_n=z'.
      \]

    \item The \emph{catenary degree \textup{(}in distance $\sd$\textup{)} of $a$}, denoted by $\sc_\sd(a)$, is the minimal $N \in \bN_0 \cup \{\infty\}$ such that for any two factorizations $z$,~$z' \in \sZ^*(a)$, there exists an $N$-chain between $z$ and $z'$.

    \item The \emph{catenary degree \textup{(}in distance $\sd$\textup{)} of $H$} is
      \[
      \sc_\sd(H) = \sup\{\, \sc_\sd(a) : a \in H \,\} \in \bN_0 \cup \{\infty\}.
      \]
  \end{enumerate}
\end{definition}

\subsection{Orders in central simple algebras}

We refer to \cite{Reiner} for background on orders in central simple algebras.
Let $D$ be a DVR with quotient field $K$ and let $A$ be a central simple $K$-algebra.
The algebra $A$ is a \emph{quaternion algebra} if $\dim_K A = 4$.
A subring $R \subset A$ is a \emph{\textup{(}$D$-\textup{)}order} in $A$ if $D \subset R$, the $D$-module $R$ is finitely generated, and $KR=A$.

We denote by $\nr\colon A \to K$ the reduced norm and by $\tr\colon A \to K$ the reduced trace.
Every $x \in A$ is a root of its reduced characteristic polynomial,
\begin{equation} \label{e-redchar}
X^n - \tr(x)X^{n-1} + a_{n-2} X^{n-2} + \dots + a_1 X + (-1)^n \nr(x) \in K[X].
\end{equation}
If $x \in R$, then $x$ is integral over $D$, and hence the coefficients of the reduced characteristic polynomial are all in $D$.

We recall that some basic multiplicative properties of an element $x$ of $R$ may be characterized in terms of $\nr(x)$.

\begin{lemma} \label{l-basic}
  Let $x \in R$.
  \begin{enumerate}
    \item\label{l-basic:unit} $x \in R^\times$ if and only if $\nr(x) \in D^\times$.
    \item\label{l-basic:zd} $x \in R^\bullet$ if and only if $\nr(x) \ne 0$.
    \item\label{l-basic:atom} If $\nr(x) \in \cA(R^\bullet)$ then $x \in \cA(R^\bullet)$.
  \end{enumerate}
\end{lemma}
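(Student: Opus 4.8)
The plan is to exploit that the reduced norm $\nr \colon A \to K$ is multiplicative, that $\nr(x) \in D$ whenever $x \in R$ (since $x$ is integral over $D$, so all coefficients of its reduced characteristic polynomial \eqref{e-redchar} lie in $D$), and that $R$ is a $D$-order — in particular $R$ is a module-finite $D$-algebra, hence Noetherian, and $K R = A$. For \ref{l-basic:unit}, one direction is immediate: if $x \in R^\times$, then $\nr(x)\nr(x^{-1}) = \nr(1) = 1$ with both factors in $D$, so $\nr(x) \in D^\times$. Conversely, suppose $\nr(x) \in D^\times$. Then in particular $\nr(x) \neq 0$, so $x$ is a unit in $A$, with inverse $x^{-1} \in A$. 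I would use the reduced characteristic polynomial \eqref{e-redchar}: since $x$ satisfies $x^n - \tr(x) x^{n-1} + \dots + a_1 x + (-1)^n \nr(x) = 0$ with all $a_i, \tr(x), \nr(x) \in D$, multiplying through by $(-1)^n \nr(x)^{-1} x^{-1}$ expresses $x^{-1}$ as a $D$-linear combination of $1, x, \dots, x^{n-1}$, all of which lie in $R$. Hence $x^{-1} \in R$, so $x \in R^\times$.

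For \ref{l-basic:zd}, if $\nr(x) = 0$ then $x$ is a zero divisor (or zero) in the central simple algebra $A$ — one can see this because $A$ is Artinian, so $x$ is either a unit or a zero divisor, and $x$ is a unit exactly when $\nr(x) \neq 0$; alternatively, $\nr(x) = 0$ forces $x$ to lie in a proper one-sided ideal. Either way $x \notin A^\bullet \supset R^\bullet$. Conversely, if $\nr(x) \neq 0$, then $x \in A^\times$, and since $R \subset A$, left or right multiplication by $x$ on $R$ is injective, so $x \in R^\bullet$.

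For \ref{l-basic:atom}, suppose $\nr(x) \in \cA(R^\bullet)$; in particular $\nr(x) \neq 0$, so $x \in R^\bullet$ by \ref{l-basic:zd}, and $\nr(x) \notin D^\times$, so $x \notin R^\times$ by \ref{l-basic:unit}. Now write $x = y z$ with $y, z \in R^\bullet$ (note $R^\bullet$ is closed under such factorizations since $R$ is a domain-like cancellative monoid on non-zero-divisors); then $\nr(x) = \nr(y)\nr(z)$ in $D$. Because $\nr(x)$ is an atom of $R^\bullet$ — here I would want to observe that $\nr(D^\bullet) = D^\bullet$ and an atom of $D^\bullet$ that happens to lie in $R^\bullet$ (via $D \subset R$) remains an atom, or more carefully interpret "$\nr(x) \in \cA(R^\bullet)$" as saying $\nr(x)$, viewed as an element of the central subring $D \subset R$, is irreducible in $R^\bullet$ — one of $\nr(y), \nr(z)$ must be a unit in $D$. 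By \ref{l-basic:unit} the corresponding factor $y$ or $z$ is in $R^\times$, so $x$ is an atom.

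The main obstacle is the correct reading and handling of \ref{l-basic:atom}: the statement compares an atom of the noncommutative monoid $R^\bullet$ with the commutative norm, so one must be careful that irreducibility of the scalar $\nr(x)$ inside $R^\bullet$ (rather than merely inside $D^\bullet$) is what is assumed, and that a factorization $\nr(x) = \nr(y)\nr(z)$ genuinely yields a factorization of $\nr(x)$ in $R^\bullet$ (which it does, since $\nr(y), \nr(z) \in D \subset R$ are non-zero-divisors). The part \ref{l-basic:unit} hinges on the integrality argument via \eqref{e-redchar}, which is the standard but essential input; everything else is bookkeeping with the multiplicativity of $\nr$.
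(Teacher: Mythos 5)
Your proposal is correct and follows essentially the same route as the paper: the forward direction of \ref{l-basic:unit} via multiplicativity of $\nr$, the converse via the reduced characteristic polynomial \eqref{e-redchar} (which the paper invokes without spelling out the adjoint-style computation you give), part \ref{l-basic:zd} by reducing to invertibility in $A$, and part \ref{l-basic:atom} by pushing a factorization $x=yz$ through the norm and applying \ref{l-basic:unit} (the paper phrases this as the contrapositive, but it is the same argument). Your care about reading $\cA(R^\bullet)$ versus $\cA(D^\bullet)$ is reasonable but immaterial, since by \ref{l-basic:unit} an element of $D$ is a unit of $R$ exactly when it is a unit of $D$, so both readings yield the same conclusion.
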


\begin{proof}
  \ref*{l-basic:unit}
  If $xy =1$ with $y \in R$, then $\nr(x)\nr(y)=\nr(xy)=1$, and hence $\nr(x) \in D^\times$.
  If $\nr(x) \in D^\times$, then \cref{e-redchar} implies $x \in R^\times$.

  \ref*{l-basic:zd}
  As in \ref*{l-basic:unit}, one shows $x \in A^\times$ if and only if $\nr(x) \ne 0$.
  Thus, if $\nr(x) \ne 0$ then $x \in R^\bullet$.
  If $x \in R^\bullet$, then $x$ is also cancellative in $A$.
  Since $A^\bullet=A^\times$, we have $x\in A^\times$ and so $\nr(x) \ne 0$.

  \ref*{l-basic:atom}
  Suppose $x$ is not an atom.
  Then $x=yz$ with $y$,~$z \in R^\bullet \setminus R^\times$.
  But then $\nr(x) =\nr(y)\nr(z)$ with $\nr(y)$,~$\nr(z) \in D^\bullet \setminus D^\times$ by \ref*{l-basic:unit}.
  Thus $\nr(x)$ is not an atom.
\end{proof}

We now note two simple facts that hold for all orders that are also local rings. These observations will be used in later sections.
By $J(R)$ we denote the Jacobson radical of $R$.

\begin{lemma} \label{l-pipower}
  Suppose $R$ is a local ring \textup{(}that is, $R/J(R)$ is a division ring\textup{)}.
  \begin{enumerate}
  \item \label{l-pipower:1}
    $\cA(R^\bullet) = J(R)^\bullet \setminus \{\, ab : a, b \in J(R) \,\}$.
    In particular, every element in $J(R)^\bullet \setminus J(R)^2$ is an atom.

  \item \label{l-pipower:2}
    There exists an $N \in \bN_0$ such that any product of $N$ atoms of $R^\bullet$ is divisible by $\pi$.
    In particular, every $x \in R^\bullet$ can be represented in the form $x=\pi^m \varepsilon u_1\cdots u_n$ with $\varepsilon \in R^\times$, $m \in \bN_0$, $u_1$, $\ldots\,$,~$u_n \in \cA(R^\bullet)$, and $n < N$.
  \end{enumerate}
\end{lemma}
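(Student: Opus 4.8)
For part \ref{l-pipower:1}, the plan is to use that in a local ring the atoms are precisely the elements of $J(R)^\bullet$ that admit no proper factorization. First I would note that $R^\bullet \setminus R^\times = J(R)^\bullet$: by \cref{l-basic}\ref{l-basic:unit} a non-unit of $R^\bullet$ is exactly a non-zero-divisor with reduced norm in the maximal ideal of $D$, and since $R$ is local, $R \setminus R^\times$ is the (two-sided) ideal $J(R)$; intersecting with the non-zero-divisors gives $J(R)^\bullet$. An element $x \in J(R)^\bullet$ fails to be an atom precisely when it factors as $x = ab$ with $a$, $b \in R^\bullet \setminus R^\times = J(R)^\bullet \subset J(R)$, i.e.\ when $x \in \{\, ab : a, b \in J(R) \,\}$ (the converse inclusion, that such a product lies in $J(R)^\bullet$, uses that $J(R)^2 \subset J(R)$ and that a product of non-zero-divisors is a non-zero-divisor). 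This gives the displayed equality. The "in particular" is immediate: $\{\, ab : a, b \in J(R)\,\} \subset J(R)^2$, so $J(R)^\bullet \setminus J(R)^2$ is contained in the atom set.

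For part \ref{l-pipower:2}, the idea is a pigeonhole/filtration argument using that $R$ is a finitely generated $D$-module and $\bigcap_m \pi^m R = 0$. Since $R$ is a free $D$-module of finite rank, say $r = \dim_K A$, each quotient $R/\pi^m R$ has finite length $mr$ as a $D$-module. Now if $x \in R^\bullet$ is a product of atoms $x = u_1 \cdots u_n$, consider the chain of right ideals (or just $D$-submodules) $R \supsetneq u_1 R \supsetneq u_1 u_2 R \supsetneq \dots$; each step is proper because the $u_i$ are non-units and $R^\bullet$ is contained in $A^\times$, so right multiplication by $u_i$ is injective and $u_1 \cdots u_i R = u_1 \cdots u_{i-1} R$ would force $u_i R = R$. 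Each $u_i \in J(R)$, so $u_1\cdots u_n R \subset J(R)^n$. Since $J(R) \supset \pi R$ and $R/\pi R$ is a finite-length module, $J(R)$ is nilpotent modulo $\pi$, i.e.\ there is $t$ with $J(R)^t \subset \pi R$; then $J(R)^n \subset \pi R$ whenever $n \ge t$, so any product of $N := t$ atoms is divisible by $\pi$. For the "in particular": given $x \in R^\bullet$, write $x = \pi^m x'$ with $m$ maximal (possible since $\bigcap_m \pi^m R = 0$ and $x \ne 0$), so $\pi \nmid x'$; factor $x'$ into atoms, pulling out an associated unit at the front, and the previous sentence forces the number of atomic factors of $x'$ to be $< N$.

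The routine points are the module-theoretic bookkeeping — that $R$ is $D$-free of finite rank, that the quotients $R/\pi^m R$ have finite length, and the nilpotence of $J(R)/\pi R$ in the finite-length ring $R/\pi R$. The one step deserving a little care is the strict descent of the chain $u_1 \cdots u_i R$: one must invoke that elements of $R^\bullet$ are units in $A$ (from \cref{l-basic}\ref{l-basic:zd} together with $A^\bullet = A^\times$) to conclude that $u_i R \subsetneq R$ actually forces each inclusion in the chain to be strict, so that $n$ is bounded by the length of $R/\pi R$ once we know $u_1 \cdots u_n R \subset \pi R$. I expect no genuine obstacle here; the main thing is to organize these observations cleanly.
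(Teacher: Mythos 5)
Your proposal is correct and follows essentially the same route as the paper: part \ref{l-pipower:1} is the observation that $R\setminus R^\times=J(R)$ in a local ring (together with divisor-closedness of $R^\bullet$), and part \ref{l-pipower:2} rests on $R/\pi R$ being an Artinian (finite-length) $D/\pi D$-algebra, so that $J(R)^N\subset \pi R$ for some $N$ and any product of $N$ atoms, all lying in $J(R)$, is divisible by $\pi$. The extra chain-of-right-ideals argument you sketch is not needed once $J(R)^N\subset\pi R$ is in hand, but it does no harm.
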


\begin{proof}
  \ref*{l-pipower:1}
  This is immediate from the definitions, since $J(R) = R \setminus R^\times$.

  \ref*{l-pipower:2}
  Since $\overline R = R/\pi R$ is a finite dimensional $D/\pi D$-algebra, it is Artinian.
  Thus there exists $N \in \bN_0$ such that $J(\overline R)^N = \mathbf 0$.
  This implies $J(R)^N \subset \pi R$.
  Since every product of $N$ or more atoms is contained in $J(R)^N$, the result follows.
\end{proof}

\paragraph{\textbf{Quaternion orders.}}
Suppose now that $A$ is a quaternion algebra, that is $\dim_K A = 4$.
Then there exists a standard involution $\overline{\mkern2mu\cdot\mkern2mu}\colon A \to A$, the \emph{conjugation}.
Thus, $\overline{\mkern2mu\cdot\mkern2mu}$ is a $K$-linear anti-automorphism of $A$ with $\overline{\overline{x}}=x$ and $x \overline x \in K$ for all $x \in A$. In particular, we have
\[
\tr(x) = x + \overline{x} \quad\text{and}\quad \nr(x) = x \overline{x} = \overline{x} x \quad\text{for $x \in A$}.
\]
If $x \in R$, then $\overline{x}=\tr(x)-x \in R$.
Therefore, $\overline{\mkern2mu\cdot\mkern2mu}$ restricts to an involution of $R$.

It is immediate from the definitions that $\cA(R^\bullet) = \cA((R^\bullet)^{\text{op}})$.
Since conjugation provides an isomorphism $R^\bullet \to (R^\bullet)^{\text{op}}$, it follows that $x \in R^\bullet$ is an atom if and only if $\overline x$ is an atom.

We denote by $\val=\val_K\colon K \to \bZ \cup \{\infty\}$ the valuation on $K$.
For quaternion orders we are able to relate the $k$-th elasticities to the maximal valuation of the norm of an atom.
The following lemma runs parallel to \cite[Proposition 6.1]{Geroldinger16}.

\begin{lemma} \label{l-rho}
  Let $M=\sup\{\, \val(\nr(u)) : u \in \cA(R^\bullet) \,\} \in \bN \cup \{\infty\}$, let $m = \min\{\, \val(\nr(u)) : u \in \cA(R^\bullet) \,\}$, and let $\sD = 2M/m \in \bQ_{\ge 2} \cup \{\infty\}$.
  \begin{enumerate}
  \item \label{l-rho:denom} If $\pi \in \cA(R^\bullet)$, then $m=2$ and otherwise $m=1$.
    Moreover, $\sL(\pi) = \{ 2/m \}$ and $\sD \in \bN \cup \{\infty\}$.
  \item \label{l-rho:rhok} For $k \in \bN$, $\rho_{2k}(R^\bullet) = k\sD$,
    \[
      1 + k\sD \le \rho_{2k+1}(R^\bullet) \le k\sD + \left\lfloor \frac{\sD}{2} \right\rfloor,
    \]
    and
    \[
      \rho(R^\bullet) = \frac{\sD}{2}.
    \]
  \end{enumerate}
  In particular, the following statements are equivalent:
  \begin{enumerate}[label=\textup{(}\alph*\textup{)}]
  \item\label{l-rho:2} $\rho_2(R^\bullet)=\infty$.
  \item\label{l-rho:allk} $\rho_k(R^\bullet)=\infty$ for all $k \ge 2$.
  \item\label{l-rho:onek} $\rho_k(R^\bullet)=\infty$ for some $k \ge 2$.
  \item\label{l-rho:elasticity} $\rho(R^\bullet)=\infty$.
  \item\label{l-rho:atom} For every $m \in \bN$, there exists an atom $u \in \cA(R^\bullet)$ such that $\val(\nr(u)) \ge m$ \textup{(}that is, $D=\infty$\textup{)}.
  \end{enumerate}
\end{lemma}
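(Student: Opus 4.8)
The plan is to organize the whole argument around the composite monoid homomorphism $w := \val \circ \nr \colon R^\bullet \to \bN_0$. By \cref{l-basic}, $w(x) = 0$ holds precisely for $x \in R^\times$, so each atom $u$ has $w(u) \ge 1$, hence $w(u) \in [m,M]$; and if $x = u_1 \cdots u_n$ is any atomic factorization then $w(x) = \sum_{i=1}^n w(u_i) \in [nm, nM]$. Since $R$ is Noetherian, $R^\bullet$ is atomic, and the estimate $n \le w(x)$ already shows every factorization of $x$ has length at most $w(x)$. Two further facts, both available above, will be used constantly: $\nr(c) = c^2$ for $c \in K$ (the reduced norm of a scalar in a degree-two algebra), so $w(\pi) = 2$; and $\overline u$ is an atom whenever $u$ is, with $w(\overline u) = w(u)$ and $u \overline u = \nr(u) \in D$ central.

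For part \ref{l-rho:denom}: writing $\pi$ as a product of atoms and using $w(\pi) = 2$ forces at most two factors, so some atom has $w$-value $\le 2$ and thus $m \in \{1,2\}$. Next I would show $m = 1$ if and only if $\pi \notin \cA(R^\bullet)$: if an atom $u$ has $w(u) = 1$ then $\nr(u) = \pi c$ with $c \in D^\times$, so $\pi = u(\overline u c^{-1})$ exhibits $\pi$ as a product of two nonunits; conversely, if $\pi = ab$ with $a$,~$b$ nonunits then $w(a) = w(b) = 1$, and a one-line length count shows $a \in \cA(R^\bullet)$, whence $m = 1$. From this $\sL(\pi) = \{2/m\}$ is immediate (if $m = 2$ then $\pi$ is an atom; if $m = 1$ then $\pi$ has an atomic factorization of length $2$, none of length $1$, and none of length $> w(\pi) = 2$), and $\sD = 2M/m$ equals $M$ or $2M$ according as $m = 2$ or $m = 1$, so $\sD \in \bN \cup \{\infty\}$.

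For part \ref{l-rho:rhok}, the upper bounds are formal: if $n$,~$l \in \sL(x)$ then $lm \le w(x) \le nM$, so $l \le nM/m = n\sD/2$, giving $\rho_n(R^\bullet) \le n\sD/2$; hence $\rho_{2k} \le k\sD$, and since $\rho_{2k+1}$ is an integer, $\rho_{2k+1} \le \lfloor (2k+1)\sD/2 \rfloor = k\sD + \lfloor \sD/2 \rfloor$. For the lower bounds, assume first $M < \infty$ and pick an atom $u$ with $w(u) = M$. Then $u\overline u = \nr(u)$ is associate in $D$ to $\pi^M$, so $(u\overline u)^k$ admits one atomic factorization given by $2k$ alternating copies of $u$ and $\overline u$ and a second one, obtained by inserting the factorization of $\pi^{kM}$ into $2kM/m$ atoms supplied by part \ref{l-rho:denom}, of length $2kM/m = k\sD$; thus $\rho_{2k} \ge k\sD$, and with the upper bound $\rho_{2k}(R^\bullet) = k\sD$. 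Appending one further factor $u$ to each of these two factorizations of $(u\overline u)^k$ gives factorizations of $(u\overline u)^k u$ of lengths $2k+1$ and $1 + k\sD$, so $\rho_{2k+1} \ge 1 + k\sD$. When $M = \infty$, running the same two constructions with atoms of arbitrarily large $w$-value shows directly that $\rho_{2k}$ and $\rho_{2k+1}$ are infinite, matching $k\sD = \infty$. Finally $\rho(R^\bullet) = \sup_{k} \rho_k(R^\bullet)/k = \sD/2$, since $\rho_k(R^\bullet)/k \le \sD/2$ for all $k$ while $\rho_{2k}(R^\bullet)/(2k) = \sD/2$.

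The equivalence of \ref{l-rho:2}--\ref{l-rho:atom} is then read off from part \ref{l-rho:rhok} together with the finiteness of $m$ from part \ref{l-rho:denom}: condition \ref{l-rho:atom} is exactly $M = \infty$, i.e.\ $\sD = \infty$; by $\rho_{2k} = k\sD$ and $1 + k\sD \le \rho_{2k+1} \le k\sD + \lfloor\sD/2\rfloor$ this is equivalent to $\rho_n(R^\bullet) = \infty$ for any single $n \ge 2$, hence for all $n \ge 2$, and it is equivalent to $\rho(R^\bullet) = \sD/2 = \infty$. The main obstacle is part \ref{l-rho:denom}: the bookkeeping that pins $m$ to $\{1,2\}$ and identifies $m = 1$ with $\pi$ being reducible is the only genuinely non-formal step, and it is precisely what makes the two-factorization trick $(u\overline u)^k$ deliver the sharp constant $k\sD$ in the even case. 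I do not expect the odd case to sharpen — $\rho_{2k+1}$ is only bracketed between $1 + k\sD$ and $k\sD + \lfloor \sD/2\rfloor$ — and closing that gap is neither attempted nor, presumably, needed later.
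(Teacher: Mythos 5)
Your proposal is correct and follows essentially the same route as the paper's proof: the upper bounds come from $lm \le \val(\nr(a)) \le kM$, and the lower bounds from the two factorizations of $(u\overline u)^k = \nr(u)^k$, an associate of a power of $\pi$, together with $\max\sL(\pi) = 2/m$. The only cosmetic difference is that you split the cases $M<\infty$ and $M=\infty$, whereas the paper takes the supremum over all atoms uniformly; both are fine.
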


\begin{proof}
  \ref*{l-rho:denom}
  Since $R^\bullet$ is atomic, there exist $u_1$,~$\ldots\,$,~$u_l \in \cA(R^\bullet)$ such that $\pi = u_1 \cdots u_l$.
  Since $\nr(\pi)=\pi^2$, we must have $l \le 2$ and $\val(\nr(u_i)) \le 2$ for all $i \in [1,l]$.
  Thus, $m \le 2$.
  If $\pi \not\in\cA(R^\bullet)$, then $l = 2$ and hence $m=1$.
  Conversely, if $m=1$ and $u \in \cA(R^\bullet)$ with $\val(\nr(u)) =1$, then $u \overline u = \nr(u) = \varepsilon \pi$ for some $\varepsilon \in D^\times$ and so $\pi \not \in \cA(R^\bullet)$.
  Note also that $\sL(\pi) = \{2/m\}$ and $\sD \in \bN \cup \{\infty\}$.

  \ref*{l-rho:rhok}
  We first determine an upper bound for $\rho_k(R^\bullet)$.
  Suppose $k$, $l \in \bN$ with $k \le l$ and that $a=u_1\cdots u_k = v_1\cdots v_l$ with $a \in R^\bullet$, $u_1$, $\ldots\,$,~$u_k$, $v_1$, $\ldots\,$,~$v_l \in \cA(R^\bullet)$.
  Then $lm \le \val(\nr(a)) \le kM$ and hence $l \le kM/m$.
  We can conclude that $\rho_k(R^\bullet) \le \lfloor k M/m \rfloor$ and so $\rho_{2k}(R^\bullet) \le k \sD$ and $\rho_{2k+1}(R^\bullet) \le k\sD + \lfloor \sD/2\rfloor$.

  Let $u \in \cA(R^\bullet)$. Then $\nr(u) = \varepsilon \pi^l$ for some $l \in \bN$ with $l \le M$ and $\varepsilon \in D^\times$.
  Since $(u \overline{u})^k = \nr(u)^k = \varepsilon^k \pi^{lk}$, we find $\max \sL((u \overline{u})^k) \ge kl \max\sL(\pi)$.
  We can conclude that $\rho_{2k}(R^\bullet) \ge kM \max \sL(\pi) = k \sD$.
  Similarly, $(u\overline{u})^ku = \varepsilon^k \pi^{lk} u$ shows that $\rho_{2k+1}(R^\bullet) \ge lk \max\sL(\pi) + 1$ and hence $\rho_{2k+1}(R^\bullet) \ge k\sD + 1$.

  Finally, $\rho(R^\bullet) = \lim_{k\to \infty} \frac{\rho_{2k}(R^\bullet)}{2k} = \sD /2$.
\end{proof}

\section{Orders of \texorpdfstring{$A$}{A} when \texorpdfstring{$\widehat A$}{\^A} is a division ring}
\label{s-division}

We begin by considering the case where $D$ is a DVR and $A$ is a finite dimensional division ring over the quotient field $K$ of $D$, having the additional property that the completion $\widehat A$ is also a division ring.
This case is particularly easy since the valuation on $K$ extends to $A$.

\begin{theorem} \label{t-div}
  Let $D$ be a DVR, let $K$ be its quotient field, and let $A$ be a finite-dimensional central division ring over $K$.
  Suppose, moreover, that the completion $\widehat A \cong A \otimes_K \widehat K$ is also a division ring.
  If $R$ is an order in $A$, then
  \begin{enumerate}
    \item\label{t-div:rho} $\rho(R^\bullet) < \infty$ and hence $\rho_k(R^\bullet) < \infty$ for all $k \in \bN$,
    \item\label{t-div:cat} $\sc_\sd(R^\bullet) < \infty$ for any distance $\sd$ on $R^\bullet$,
    \item\label{t-div:delta} $\card{\Delta(R^\bullet)} < \infty$,
    \item\label{t-div:rhokgaps} there exists $M \in \bN$ such that $\rho_k(R^\bullet) - \rho_{k-1}(R^\bullet) \le M$ for all $k \in \bN_{\ge 2}$,
    \item\label{t-div:structure}
      the Structure Theorem for Unions of Sets of Lengths holds for $R^\bullet$.
    \end{enumerate}
\end{theorem}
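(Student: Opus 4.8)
The plan is to extend the valuation of $K$ to all of $A$ --- which is possible precisely because $\widehat A$ is a division ring --- thereby exhibiting $R^\bullet$ as a (noncommutative) rank-one finitely primary monoid, and then to adapt the machinery from the commutative rank-one finitely primary case. Concretely: since $\widehat A$ is a finite-dimensional division ring over the complete field $\widehat K$, the valuation $\val$ extends to a discrete valuation of $\widehat A$, and restriction to $A$ (rescaled to value group $\bZ$) gives a discrete valuation $w$ on $A$ that is proportional to $\val\circ\nr$, by compatibility of the reduced norm with base change. Its valuation ring $\Lambda = \{\, x \in A : w(x) \ge 0 \,\}$ is the unique maximal order of $A$: it is closed under addition exactly because $\widehat A$ is a division ring, and any order of $A$, being integral over $D$, is contained in it. Moreover $\Lambda$ is a chain domain with $J(\Lambda) = \pi_\Lambda\Lambda = \Lambda\pi_\Lambda$, $w(\pi_\Lambda)=1$, and $J(\Lambda)^k = \{\, x : w(x) \ge k\,\}$, so every nonzero $x \in \Lambda$ has the form $\varepsilon\pi_\Lambda^{w(x)}$ with $\varepsilon \in \Lambda^\times$.

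Next I would pin down the position of $R$ inside $\Lambda$ and settle \cref{t-div:rho}. By \cref{l-basic}, $x \in R^\times$ iff $\nr(x)\in D^\times$ iff $w(x)=0$; hence $R^\times = R \setminus J(\Lambda)$, so $R$ is local with $J(R) = R \cap J(\Lambda)$. Because $R$ and $\Lambda$ are full $D$-lattices in $A$, some $\pi^s\Lambda$ is contained in $R$; writing $\pi^s\Lambda = J(\Lambda)^t$ we get $J(\Lambda)^t \subset J(R)$ and hence $J(\Lambda)^{2t} \subset J(R)^2$. (If $R=\Lambda$, then $R$ is hereditary and all claims are classical; so assume $R \subsetneq \Lambda$, whence $t \ge 1$.) By \cref{l-pipower} no atom of $R^\bullet$ lies in $J(R)^2$, so $1 \le w(u) \le 2t-1$ for every $u \in \cA(R^\bullet)$. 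Consequently, if $u_1\cdots u_k = v_1\cdots v_l$ with atoms $u_i,v_j$ and $k \le l$, then $l \le w(v_1\cdots v_l) \le (2t-1)k$; thus $\rho(R^\bullet) \le 2t-1$ and $\rho_k(R^\bullet) \le (2t-1)k < \infty$ for all $k$, which is \cref{t-div:rho}. (When $\dim_K A = 4$ one could instead feed the boundedness of $\val(\nr(u))$ into \cref{l-rho} to get the exact value $\rho(R^\bullet)=\sD/2$.)

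The relations $R^\bullet \subset \Lambda^\bullet$, $R^\times = R \cap \Lambda^\times$, and $J(\Lambda)^t \subset R$ say precisely that $R^\bullet$ is a finitely primary monoid of rank one and exponent at most $t$, and I would derive \cref{t-div:cat,t-div:delta,t-div:rhokgaps,t-div:structure} from this exactly as in the commutative case (cf.\ \cite[Ch.~3]{GHK06}, \cite{Geroldinger16}). The crux is that any two rigid factorizations of a fixed $x \in R^\bullet$ can be connected by a chain of moves, each replacing a block of at most $C = C(t)$ consecutive atoms by another block of at most $C$ atoms with the same product: one peels off a prefix of at most $t$ atoms whose product $y$ satisfies $t \le w(y) < 3t$ and --- since every element of $A$ of valuation $\ge t$ already lies in $R$ --- re-factors $y$, together with the matching initial segment of the second factorization, with enough freedom to synchronize the two factorizations step by step. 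Axioms \cref{d:mul,d:len} then bound $\sd$ along each such move by $C$, so $\sc_\sd(R^\bullet) \le C$ for every distance $\sd$ (this is \cref{t-div:cat}); the general estimate $\max\Delta(R^\bullet) \le \sc_\sd(R^\bullet)$, immediate from \cref{d:len}, then gives \cref{t-div:delta}; the same structure makes $k \mapsto \rho_k(R^\bullet)$ quasi-linear with bounded wobble, i.e.\ \cref{t-div:rhokgaps}; and finiteness of $\Delta(R^\bullet)$ together with $\rho_k(R^\bullet)<\infty$ for all $k$ yields \cref{t-div:structure} by \cite{Fan-Geroldinger-Kainrath-Tringali}.

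I expect the main obstacle to be the bounded block-move property in the previous paragraph, and with it a bound on $\sc_\sd$ uniform over all distances $\sd$ and all elements of $R^\bullet$: this is exactly what the rank-one finitely primary structure (the slack provided by $J(\Lambda)^t \subset R$) is there to supply, whereas the first two steps are essentially formal.
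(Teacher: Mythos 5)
Your proposal follows essentially the same route as the paper: extend the valuation to $A$ via the completion, identify the maximal order $S=\Lambda$, use $\gamma^nS\subset R$ (your $J(\Lambda)^t\subset R$) to bound atom valuations by $2n-1$ and hence the elasticity, and then run the ``peel off a bounded prefix and resynchronize'' chain argument --- exactly the finitely-primary-rank-one template the paper itself invokes in \cref{r-finitelyprimarymonoid}. The only nitpicks are cosmetic: \subref{l-pipower:1} excludes atoms from the set of products $ab$ with $a,b\in J(R)$ rather than from the ideal $J(R)^2$ (your elements of $J(\Lambda)^{2t}$ do factor as such products, so the conclusion stands), and \cref{t-div:structure} needs the bounded gaps of \cref{t-div:rhokgaps} (which you prove) rather than mere finiteness of each $\rho_k$.
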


\begin{proof}
  We briefly summarize the essential properties of the extension of the valuation $\mathsf v_K$ to $A$.
  These results follow from \cite[\S12 and \S13]{Reiner} when $A=\widehat A$ and descend to the non-complete case by \cite[\S18]{Reiner}.

  Since $\widehat A$ is a division ring and $\widehat D$ is complete, there exists a valuation $\val_{\widehat A} \colon \widehat A \to \bZ \cup \{\infty\}$ such that $\val_{\widehat A}(x) = \tfrac{e}{m} \val_{\widehat K}(\nr(x))$ where $m^2=\dim_{\widehat K} {\widehat A}=\dim_K A$ and $e=e(\widehat A/\widehat K)$ is the ramification index.
  This restricts to a valuation $\val_A = \val_{\widehat A}|_A$ on $A$.
  The ring
  \[
  S = \{\, x \in A : \val_A(x) \ge 0 \,\}
  \]
  is the unique maximal $D$-order in $A$.
  Let $\gamma \in S$ with $\val_A(\gamma)=1$.
  Every element of $A$ may be represented in the form $\gamma^n \varepsilon$ with $n \in \bZ$ and $\varepsilon \in S^\times$.
  Since $S$ and $R$ are equivalent orders, there exist $a$, $b \in S$ such that $aSb \subset R$.
  Writing $a=\gamma^{\val_A(a)}\varepsilon$ and $b=\delta \gamma^{\val_A(b)}$ with $\varepsilon$,~$\delta \in S^\times$ and using $\gamma S = S \gamma$, we see $\gamma^{\val(a)+\val(b)}S \subset R$.
  It follows that there exists a minimal $n \in \bN_0$ such that $\gamma^n S = \{\, x \in A : \val_A(x) \ge n \,\} \subset R$.
  In particular:
  \begin{itemize}
    \item $\gamma^n \in R$, and
    \item If $a$,~$b \in R^\bullet$ such that $\val_A(b) + n \le \val_A(a)$, then there exists $c \in R^\bullet$ with $a=bc$.
  \end{itemize}
  In particular, if $u \in \cA(R^\bullet)$, then $\val_A(u) \le 2n-1$.
  By \subref{l-basic:unit}, an element $a \in R^\bullet$ is a unit if and only if $\val_A(a)=0$.
  We are now able to prove the claims of the theorem.

  \ref*{t-div:rho}
  Let $k, l \in \bN$ with $k\leq l$ and let $u_1$,~$\ldots\,$,~$u_k$, $v_1$, $\ldots\,$,~$v_l \in \cA(R^\bullet)$ be such that $a= u_1\cdots u_k = v_1\cdots v_l$.
  Then $l \le \val_A(a) \le k(2n-1)$, and hence $\tfrac{l}{k} \le 2n-1$.
  It follows that $\rho(R^\bullet) \le 2n-1$.

  \ref*{t-div:cat}
  Let $\sd$ be a distance on $R^\bullet$ and let $a \in R^\bullet$.
  If $a \in R^\times$, then $\sc_\sd(a)=0$, and hence we may suppose $a \not \in R^\times$.
  We claim $\sc_\sd(a) \le 3n-1$.
  Suppose that $z=\rf{u_1,\cdots,u_k}$ and $z'=\rf{v_1,\cdots,v_l}$ with $u_1$, $\ldots\,$,~$u_k$, $v_1$, $\ldots\,$,~$v_l \in \cA(R^\bullet)$ are two factorizations of $a$.
  We need to show that there exists a $(3n-1)$-chain between $z$ and $z'$.
  Since $\val_A(v_i) \ge 1$ for all $i \in [1,l]$ and $\val(u_1) \le 2n-1$, there exists $m \in [1,l]$ with $m \le 3n-1$ such that $v_1\cdots v_m = u_1 c$ with $c \in R^\bullet$.
  Taking $m$ minimal, we must have $\val_A(v_1\cdots v_{m-1}) < \val_A(u_1) + n$.
  Since $\val_A(v_m) \le 2n-1$, it follows that $\val_A(c) < 3n-1$.
  Hence $c=w_1\cdots w_r$ with $r \in [1,3n-2]$ and $w_1$, $\ldots\,$,~$w_r \in \cA(R^\bullet)$.
  Thus
  \[
  \sd(\rf{u_1,w_1,\cdots,w_r,v_{m+1},\cdots,v_l},\, \rf{v_1,\cdots,v_l}) \le 3n-1.
  \]
  Since $w_1\cdots w_r v_{m+1} \cdots v_l = u_2\cdots u_k$, we can iterate this process to find a $(3n-1)$-chain between $z$ and $z'$.

  \ref*{t-div:delta}
  $\max \Delta(R^\bullet) \le \sc_\sd(R^\bullet) < \infty$ by \cite[Lemma 4.2]{Baeth-Smertnig}.

  \ref*{t-div:rhokgaps}
  Let $k$,~$l \in \bN_{\ge 2}$ and $u_1$,~$\ldots\,$,~$u_k$, $v_1$, $\ldots\,$,~$v_l \in \cA(R^\bullet)$ be such that $u_1\cdots u_k = v_1\cdots v_l$.
  As in \ref*{t-div:cat}, there exists $m \le 3n-1$ such that $u_1$ left divides $v_1\cdots v_m$, say $v_1\cdots v_m = u_1 c$ with $c \in R^\bullet$.
  If $c \in R^\times$, then $m=1$, and $l-1 \le \rho_{k-1}(R^\bullet)$.
  If $c \not \in R^\times$, then $\max \sL(c) \ge 1$, and $u_2\cdots u_k = c v_{m+1} \cdots v_l$ implies
  \[
    1 + (l-m) \le \max \sL(c) + \max \sL(v_{m+1}\cdots v_l) \le \rho_{k-1}(R^\bullet).
  \]
  Thus $l \le \rho_{k-1}(R^\bullet) + 3n-2$.
  We conclude $\rho_k(R^\bullet) \le \rho_{k-1}(R^\bullet) + 3n-2$.

  \ref*{t-div:structure}
  Holds by \cite[Theorem 2.6]{Geroldinger16} since $\Delta(R^\bullet)$ is finite and \ref*{t-div:rhokgaps} holds.
\end{proof}

\begin{remark}\label{r-finitelyprimarymonoid}
  The proof of Theorem \ref{t-div} runs completely parallel to the one for finitely primary monoids of \ as given in \cite[Theorem 3.1.5]{GHK06} and \cite[Proposition 3.6]{Geroldinger-Kainrath09}. The fundamental property is that every atom $u \in \cA(R^\bullet)$ left divides any product of at least $3n-1$ elements. This can be viewed as a (very strong) variant of the $\omega$-invariant being bounded by $3n-1$. We leave as an open question whether or not there is a transfer homomorphism from $R^\bullet$ to some finitely primary monoid of rank $1$.
\end{remark}

\section{Eichler orders in \texorpdfstring{$M_2(K)$}{M\texttwoinferior(K)}}
\label{s-eichler}

Let $D$ be a DVR (not necessarily complete) with prime element $\pi$ and valuation $\val$, and let $K$ be the quotient field of $D$.
We now consider the case where $R$ is an Eichler order in $A=M_2(K)$.
Thus, without restriction,
\begin{equation}\label{d-Eichler}
R=\begin{bmatrix} D & \pi^nD \\ D & D \end{bmatrix}=\left\{\,\begin{bmatrix} a & b\pi^n \\ c & d \end{bmatrix} : a,b,c,d \in D\,\right\},
\end{equation}
for some $n \in \bN_0$.
The number $n$ is referred to as the \emph{level} of the Eichler order $R$.
The reduced norm coincides with the determinant, and conjugation coincides with the adjugate.

In case $n \in \{0,1\}$, the ring $R$ is a hereditary order, and hence the results of \cite{Estes,Smertnig17} apply.
Before proceeding, we briefly summarize the known results in these cases.
If $n=0$, then $R=M_2(D)$ is a maximal order.
Since $R$ is a principal ideal ring, $R^\bullet$ is similarity factorial; that is, as a consequence of the Jordan-H\"{o}lder Theorem, factorizations are unique up to order and similarity of the atoms.
Moreover, since every $A\in R$ has a Smith Normal Form, the determinant is a transfer homomorphism.

If $n=1$, then $R$ is a non-maximal hereditary order.
The determinant is again a transfer homomorphism (see \cite{Estes}).
Moreover, $R^\bullet$ is composition series factorial, but not similarity factorial (see \cite[Proposition 4.12]{Smertnig17}).

In particular, if $n\in \{0,1\}$, then $R^\bullet$ is half-factorial and so $\rho(R^\bullet)=1$ and $\Delta(R^\bullet)=\emptyset$.
It is also known that $\sc_\sd(R^\bullet) \le 2$ for every distance $\sd$ on $R^\bullet$.
(This follows from \cite[Theorem 4.10]{Smertnig17} together with the fact that $R^\bullet$ has trivial class group.)

In this section, we study the remaining cases, in which $R$ is non-hereditary.
\begin{center}
  \emph{For the remainder of this section, we assume $R$ is as in \cref{d-Eichler} and $n \ge 2$.}
\end{center}

For $A \in R$, let $\val_{i,j}(A)$ denote the valuation of the $(i,j)$-th entry of $A$.

We shall repeatedly make use of the fact that for $2\times 2$ matrices, taking the adjugate, which we denote by $\adj$, is the standard involution on $M_2(R)$.
This often allows us to treat cases involving $\val_{1,1}(A)$ and $\val_{2,2}(A)$ by symmetry.

Note that, by \cref{l-basic}, we have $R^\bullet = \{\, A \in R : \det(A) \ne 0 \,\}$ and $R^\times = \{\, A \in R : \det(A) \in D^\times \,\}$.
The units can be described even more explicitly.
\begin{lemma} \label{l-units-1}
  Let
  \[
  A = \begin{bmatrix} a & b\pi^n \\ c & d \end{bmatrix} \in R.
  \]
  Then $A \in R^\times$ if and only if $a$,~$d \in D^\times$.
\end{lemma}

\begin{proof}
  We have $\det(A) = ad - bc\pi^n$ and $A$ is a unit if and only if $\det(A) \in D^\times$, that is, $\val(\det(A))=0$.
  Suppose $A \in R^\times$.
  Since $\val(ad - bc\pi^n) \ge \min \{ \val(a) + \val(d), \val(b) + \val(c) + n \}$ we must have $\val(a)=\val(d)=0$. Thus $a$,~$d \in D^\times$.

  Conversely, if $\val(a)=\val(d) = 0$, then $0 = \val(ad) < \val(bc\pi^n)$ and hence $\val(\det(A)) = \min\{\val(ad), \val(bc\pi^n) \} = 0$.
  Thus $\det(A) \in D^\times$ and so $A \in R^\times$.
\end{proof}

\Subref{l-basic:atom} also implies that if $A \in R^\bullet$ with $\det(A) \in \cA(D^\bullet)$, then $A \in \cA(R^\bullet)$.
We now prove a partial converse.
\begin{lemma} \label{l-det-atom}
  Let
  \[
  A=\begin{bmatrix} a & b\pi^n \\ c & d\end{bmatrix} \in R^\bullet
  \]
  with $\val(b)+\val(c) > 0$.
  Then $A$ is an atom in $R^\bullet$ if and only if $\det(A)$ is an atom in $D^\bullet$.
\end{lemma}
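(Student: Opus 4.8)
The plan is to notice that one implication is already in hand: by \Subref{l-basic:atom} (recall that $\nr=\det$ on $M_2$), if $\det(A)$ is an atom of $D^\bullet$ then $A$ is an atom of $R^\bullet$; this is exactly the observation made just before the statement. So only the converse needs an argument, and I would prove it by contraposition: assuming $\det(A)$ is \emph{not} an atom of $D^\bullet$, I exhibit a factorization $A=BC$ with $B,C\in R^\bullet\setminus R^\times$, so that $A$ is not an atom. Since $A\in R^\bullet$ we have $\det(A)\ne 0$ by \cref{l-basic}; if $\det(A)\in D^\times$ then $A\in R^\times$ by \cref{l-units-1} and there is nothing to prove, so I may assume $m:=\val(\det(A))\ge 2$. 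The task is then to split off, on the left, a non-unit of reduced norm $\pi$.

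I would then split into cases by the valuations of the diagonal entries $a,d$. If $\val(a)=0$, so $a\in D^\times$, I use the identity over $D$
\[
  A \;=\; \begin{bmatrix} 1 & 0 \\ c/a & \pi \end{bmatrix}
          \begin{bmatrix} a & b\pi^n \\ 0 & \det(A)/(a\pi) \end{bmatrix};
\]
both factors lie in $R$ (for the second, $\val(\det(A)/(a\pi))=m-1\ge 1$, so that entry is in $D$), both lie in $R^\bullet$, and their determinants have valuations $1$ and $m-1$, so by \cref{l-units-1} both are non-units. The case $\val(d)=0$ reduces to this one by applying the adjugate: $\adj(A)$ again satisfies the hypotheses of the lemma, its $(1,1)$-entry $d$ is a unit, so the previous case gives $\adj(A)=B'C'$ with $B',C'\in R^\bullet\setminus R^\times$, and then $A=\adj(\adj(A))=\adj(C')\adj(B')$ is a factorization of $A$ into non-units, since $\adj$ is the standard involution on $R$, hence an anti-automorphism preserving $\det$. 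Finally, if $\val(a)\ge 1$ and $\val(d)\ge 1$, the hypothesis $\val(b)+\val(c)>0$ forces $\val(b)\ge 1$ or $\val(c)\ge 1$; in the first case I take $B=\begin{bmatrix} \pi & 0 \\ 0 & 1 \end{bmatrix}$ and $C=B^{-1}A$, in the second $B=\begin{bmatrix} 1 & 0 \\ 0 & \pi \end{bmatrix}$ and $C=B^{-1}A$, and in either case one checks that $C\in R$ precisely because of those valuation conditions, that $C\in R^\bullet$, and that $\det(C)$ has valuation $m-1\ge 1$, so $B$ and $C$ are non-units. These three cases are exhaustive, so $A$ is not an atom.

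The only real point is the last case: the hypothesis $\val(b)+\val(c)>0$ is used exactly there (the first two cases do not need it), and it cannot be dropped. Everything else is bookkeeping — choosing in each case an elementary matrix of determinant $\pi$ that can be divided off on the left while keeping the cofactor inside $R$ — and the verifications are routine $2\times 2$ computations over $D$ that use nothing about completeness of $D$.
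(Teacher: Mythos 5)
Your proposal is correct and takes essentially the same approach as the paper: prove the converse by contraposition, splitting off an explicit non-unit factor of determinant $\pi$ in each case and using the adjugate to transfer between symmetric cases. The only difference is cosmetic — the paper organizes the cases by $\val(a)\ge 1$ versus $\val(a)=0$ and uses $\operatorname{diag}(\pi,1)$ on the left or right, while you organize them by which diagonal entry is a unit and use a lower-triangular factor in the unit case — but the verifications all check out, including the observation that the hypothesis $\val(b)+\val(c)>0$ is only needed when both diagonal entries are non-units.
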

\begin{proof}
  We have already noted that if $\det(A)$ is an atom of $D^\bullet$, then $A$ is an atom of $R^\bullet$.
  Suppose to the contrary that $\det(A)=ad-bc\pi^n$ is reducible, that is $\val(ad) \ge 2$.
  Suppose that $\val(a) \ge 1$.
  If $\val(b) \ge 1$, then
  \[
  A = \begin{bmatrix} \pi & 0 \\ 0 & 1 \end{bmatrix} \begin{bmatrix} \pi^{-1}a & b\pi^{-1}\pi^n \\ c & d \end{bmatrix}
  \]
  is a factorization of $A$ into two non-units.
  If $\val(c) \ge 1$, then
  \[
  A = \begin{bmatrix} \pi^{-1}a & b\pi^n \\ \pi^{-1}c & d \end{bmatrix} \begin{bmatrix} \pi & 0 \\ 0 & 1 \end{bmatrix}.
  \]
  If $\val(a) = 0$, then $\val(d) \ge 2$, and by what we have already shown, $\adj(A)$ is reducible.
  Thus $A=\adj(\adj(A))$ is also reducible.
\end{proof}
However, we now show that --- completely contrary to the cases $n \in \{0,1\}$ --- there exist atoms whose determinant is not an atom, and thus the converse of \subref{l-basic:atom} is false in general.

\begin{lemma} \label{l-large-atom}
  If
  \[
  A=\begin{bmatrix} a & b\pi^n \\ c & d \end{bmatrix} \in R^\bullet
  \]
  with $\val(b)=\val(c)=0$ and $\val(a) > 0$, $\val(d) > 0$, then $A$ is an atom in $R^\bullet$.
\end{lemma}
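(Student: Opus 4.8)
The plan is to argue by contradiction. Suppose $A = BC$ with $B$,~$C \in R^\bullet \setminus R^\times$, and write
\[
B = \begin{bmatrix} a_1 & b_1\pi^n \\ c_1 & d_1 \end{bmatrix}, \qquad C = \begin{bmatrix} a_2 & b_2\pi^n \\ c_2 & d_2 \end{bmatrix}
\]
with all entries in $D$. Multiplying out gives the four scalar identities
\[
a = a_1a_2 + b_1c_2\pi^n, \quad b = a_1b_2 + b_1d_2, \quad c = c_1a_2 + d_1c_2, \quad d = c_1b_2\pi^n + d_1d_2,
\]
and the goal is to show that the valuation hypotheses on $a$, $b$, $c$, $d$ force $B$ or $C$ to be a unit, contradicting \cref{l-units-1}.

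First I would read off the consequences of each hypothesis. Since $\val(b) = 0$ and a sum of two elements of $D$ has valuation $0$ only if one of the summands is a unit, the identity $b = a_1b_2 + b_1d_2$ gives that either $a_1, b_2 \in D^\times$ or $b_1, d_2 \in D^\times$; symmetrically, $\val(c) = 0$ yields either $c_1, a_2 \in D^\times$ or $d_1, c_2 \in D^\times$. Because $n \ge 1$, the summand $b_1c_2\pi^n$ has positive valuation, so $\val(a) > 0$ forces $\val(a_1) + \val(a_2) \ge 1$; likewise $\val(d) > 0$ forces $\val(d_1) + \val(d_2) \ge 1$ from the last identity. This is the only place where $n \ge 1$ (hence $n \ge 2$) enters. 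Then I would run a short case analysis on the dichotomy coming from $\val(b) = 0$. If $a_1, b_2 \in D^\times$, then $\val(a_1) = 0$ forces $\val(a_2) \ge 1$, which excludes the branch ``$c_1, a_2 \in D^\times$'' of the $\val(c)$-dichotomy; hence $d_1, c_2 \in D^\times$, so $a_1, d_1 \in D^\times$, and \cref{l-units-1} gives $B \in R^\times$, a contradiction. The remaining case $b_1, d_2 \in D^\times$ is reduced to the one just handled by passing to $\adj(A) = \adj(C)\adj(B)$: the adjugate preserves the shape of the hypotheses (off-diagonal entries of valuation $0$, diagonal entries of positive valuation), sends units to units and non-units to non-units, and permutes the roles of the entries of $B$ and $C$ precisely so that $b_1, d_2 \in D^\times$ becomes the assumption of the first case.

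I do not expect a genuine obstacle; the argument is a finite bookkeeping of valuations. The one step requiring care is the symmetry reduction via $\adj$, where I must track exactly how the entries of the two factors permute under $A = BC \mapsto \adj(A) = \adj(C)\adj(B)$ so that the second case really does fall under the first.
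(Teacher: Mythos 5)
Your proposal is correct and follows essentially the same route as the paper: expand $A=BC$, split into the two cases according to which summand of $b=a_1b_2+b_1d_2$ is a unit, and in each case combine the $\val(c)=0$ dichotomy with the positivity of $\val(a)$ (resp.\ $\val(d)$) to force one factor to be a unit via \cref{l-units-1}. The only cosmetic difference is that you derive the contradiction as ``$B$ is a unit'' whereas the paper derives it as ``$\val(a)=0$'' (the same facts in a permuted order), and you formalize the second case via the adjugate where the paper simply says ``similarly''; both are fine.
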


\begin{proof}
Suppose that $A$ factors as
\begin{equation}\label{eq-aprod}
  A=\begin{bmatrix} a & b\pi^n \\ c & d\end{bmatrix}
  = \underbrace{\begin{bmatrix} e & f\pi^n \\ g & h \end{bmatrix}}_B \underbrace{\begin{bmatrix} p & q\pi^n \\ r & s \end{bmatrix}}_C
  = \begin{bmatrix} ep + fr \pi^n & (eq + fs) \pi^n \\ gp + hr & gq\pi^n + hs \end{bmatrix},
\end{equation}
a product of two non-units $B$ and $C$ of $R^\bullet$.
Considering the upper-right and lower-left corners of $A$, we obtain
\[
0 = \val(b) = \min\{ \val(e) + \val(q), \val(f) + \val(s) \}
\]
and
\[
0 = \val(c) = \min\{ \val(g) + \val(p), \val(h) + \val(r) \}.
\]

First suppose that $\val(e) + \val(q) = 0$.
Then $\val(h) > 0$ since $B$ is not a unit.
Thus $\val(g) + \val(p) = 0$.
But then
\[
\val(a)=\val(ep+fr\pi^n)=\min\{\val(e) + \val(p), \val(f)+\val(r)+n\} = 0,
\]
a contradiction to the assumption that $\val(a) > 0$.

In the case that $\val(f) + \val(s) = 0$, we similarly find $\val(p) > 0$ and hence $\val(h)+\val(r)=0$, a contradiction to $\val(d) > 0$.
\end{proof}

We will soon classify all of the atoms of $R^\bullet$.
Before doing so, we give a lemma that provides atoms of arbitrarily large valuation.
This, in turn, guarantees infinite elasticity as per \subref{l-rho:rhok}.

\begin{lemma} \label{l-eichler-longatom}
  For all $a \in D^\bullet \setminus D^\times$, there exists an atom $U \in \cA(R^\bullet)$ with $\det(U)=a$.
  In particular, we have $\rho_k(R^\bullet)=\infty$ for all $k \ge 2$.
\end{lemma}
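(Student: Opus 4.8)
The goal is to construct, for every non-unit $a \in D^\bullet \setminus D^\times$, an atom $U \in \cA(R^\bullet)$ with $\det(U) = a$. Write $a = \varepsilon \pi^k$ with $\varepsilon \in D^\times$ and $k \ge 1$. The natural candidate, guided by \cref{l-large-atom}, is a matrix whose off-diagonal entries are units (so that the adjugate-involution symmetry and the valuation bookkeeping of that lemma apply) and whose diagonal entries carry the required determinant valuation. Concretely I would try
\[
U = \begin{bmatrix} a & \pi^n \\ 1 & 0 \end{bmatrix} = \begin{bmatrix} \varepsilon \pi^k & \pi^n \\ 1 & 0 \end{bmatrix},
\]
which lies in $R$, has $\det(U) = -\pi^n$... — wait, that gives the wrong determinant, so instead take
\[
U = \begin{bmatrix} a & \pi^n \\ -1 & 1 \end{bmatrix}, \qquad \det(U) = a + \pi^n.
\]
This still does not give $\det(U) = a$ on the nose, so the cleaner choice is
\[
U = \begin{bmatrix} \pi & b\pi^n \\ c & \pi^{k-1} \end{bmatrix}
\]
with $b$, $c$ chosen so that $\det(U) = \pi^k - bc\pi^n$ equals (a unit multiple of) $a$; since $n \ge 2 > 1$, for $k$ small one simply takes $b = 0$ and adjusts by a unit, while for larger $k$ one solves $bc\pi^n = \pi^k - \varepsilon^{-1}\cdot(\text{unit})$, which is possible whenever $k \ge n$ by setting $c = 1$, $b \in D^\times$ appropriately, or $b = c = 0$ type degenerate adjustments. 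The key point I want to extract is: whatever the value of $k \ge 1$, one can pick the diagonal entries to have positive valuation and the off-diagonal entries to include a unit, so that \cref{l-large-atom} (or its adjugate) applies directly and forces $U$ to be an atom, while the determinant is a prescribed unit multiple of $\pi^k$; replacing $U$ by $U\varepsilon'$ for a suitable unit diagonal matrix $\varepsilon'$ (or scaling a column) fixes the unit and gives $\det(U) = a$ exactly.

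The main case division is on the size of $k = \val(a)$ relative to $n$. When $1 \le k$, the simplest uniform construction is to set
\[
U = \begin{bmatrix} \pi & \pi^n \\ \pi^{k-1} - 1 & 1 \end{bmatrix}
\]
so that $\det(U) = \pi - \pi^n(\pi^{k-1}-1) = \pi + \pi^n - \pi^{n+k-1}$. For $k \ge 1$ and $n \ge 2$ this has valuation exactly $1$, which is wrong unless $k=1$. This shows the honest obstacle: one cannot make both diagonal entries have valuation $1$ and still control the determinant valuation freely, so the two diagonal entries must carry valuations $v_1, v_2 \ge 1$ with $v_1 + v_2 = k$ whenever $bc\pi^n$ does not interfere, i.e. whenever $v_1 + v_2 < n$ — but that caps $k$ at $n-1$. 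For $k \ge n$ one instead wants the $bc\pi^n$ term to dominate or cancel: take $v_1 = v_2 = 1$ (so the $ad$-term has valuation $2$) and arrange $bc\pi^n$ with $\val(bc) = 0$ to produce valuation exactly $n$, then correct the residual $\pi^2$-discrepancy; more systematically, write $a = \pi^2 \cdot a'$ when $k \ge 2$ and induct, or simply observe that it suffices to realize one atom of determinant $\pi^n$ and one of determinant $\pi^j$ for each $j \in [1, n]$ and then note we only need \emph{some} atom of large valuation for the second assertion. In fact for the stated conclusion we do not need to hit every $a$ with a single atom — but the lemma as phrased does claim that, so I would handle it cleanly as follows.

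\textbf{Cleaner approach.} For $k = \val(a)$ with $1 \le k \le n$, set
\[
U = \begin{bmatrix} \pi^{k} - \pi^n & \pi^n \\ -1 & 1 \end{bmatrix},
\]
which lies in $R$ and has $\det(U) = \pi^k - \pi^n + \pi^n = \pi^k$. Here $\val_{1,2}(U) = n$, $\val_{2,1}(U) = 0$, and $\val_{1,1}(U) = k$ (using $k \le n$, with the case $k = n$ handled separately or by a unit tweak since then the $(1,1)$-entry vanishes — replace it by $\pi^n - \pi^{n+1}$ and correct), $\val_{2,2}(U) = 0$. This matrix does not directly match the hypothesis of \cref{l-large-atom} because $\val_{2,1} = 0$ but $\val_{1,2} = n > 0$; instead apply \cref{l-large-atom} to $\adj(U)$ after swapping columns, or — better — prove atomicity of this particular shape by a short direct argument paralleling \cref{l-large-atom}: if $U = BC$ with $B, C \notin R^\times$, examine the valuations of the $(2,1)$- and $(1,2)$-entries of the product to force a contradiction with $\val_{1,1}(U) \ge 1$. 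For $k > n$, write $a = \pi^{n} \cdot \pi^{k-n} \varepsilon$ and take $U$ to be a product-free construction: use
\[
U = \begin{bmatrix} \pi^{k-n+1} & \pi^n \\ \pi^{k-n+1} - \varepsilon^{-1}\pi^{?} & \cdots \end{bmatrix}
\]
— at this point the bookkeeping branches, and \textbf{the main obstacle} is precisely this: assembling, uniformly in $k$, a matrix in $R$ with prescribed determinant $a$ whose entry-valuations satisfy the hypotheses of \cref{l-large-atom} or a close variant. I expect the paper resolves it by allowing the off-diagonal unit to sit in either corner (using the $\adj$-symmetry noted before \cref{l-units-1}) and by splitting $\val(a)$ between the two diagonal entries when $\val(a) < n$, versus forcing the $bc\pi^n$ term to realize the valuation when $\val(a) \ge n$. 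Once any single such $U$ with $\val(\det U) = \val(a)$ arbitrarily large is produced, the final sentence follows immediately from the equivalence of \crefrange{l-rho:2}{l-rho:atom} in \cref{l-rho}: the existence of atoms of unbounded reduced-norm valuation gives $\sD = \infty$, hence $\rho_k(R^\bullet) = \infty$ for all $k \ge 2$.
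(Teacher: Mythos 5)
There is a genuine gap: you correctly identify the crux of the lemma --- producing, uniformly in $\val(a)$, a matrix in $R$ with determinant exactly $a$ whose entry valuations satisfy the hypotheses of \cref{l-large-atom} --- but you never actually produce such a matrix, and the one concrete candidate you do write down fails. For $2 \le k \le n$ you propose
\[
U = \begin{bmatrix} \pi^{k} - \pi^n & \pi^n \\ -1 & 1 \end{bmatrix},
\]
which indeed has $\det(U)=\pi^k$, but its $(2,2)$-entry is a unit. Adding the second column to the first shows $U$ is right associated to $\begin{bsmallmatrix} \pi^k & \pi^n \\ 0 & 1\end{bsmallmatrix} = \begin{bsmallmatrix} \pi & \pi^n \\ 0 & 1\end{bsmallmatrix}\begin{bsmallmatrix} \pi^{k-1} & 0 \\ 0 & 1\end{bsmallmatrix}$, a product of $k$ non-units; equivalently, \subref{t-lengths:det} gives $\sL(U)=\{k\}$. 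So for $k\ge 2$ this matrix is \emph{not} an atom, and no "short direct argument paralleling \cref{l-large-atom}" can rescue it. Your treatment of $k>n$ is left with undetermined entries, so that case is simply open in your write-up.

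The missing idea is a single uniform construction. For $\val(a) \ge 2$ take
\[
U=\begin{bmatrix} a\pi^{-1} + \pi^{n-1} & \pi^n \\ 1 & \pi \end{bmatrix} \in R.
\]
Then $\det(U) = a + \pi^n - \pi^n = a$ exactly (the $\pi^{n-1}$ summand exists precisely to cancel the off-diagonal contribution), both off-diagonal entries are as required ($b=c=1$ are units), and both diagonal entries have positive valuation since $\val(a\pi^{-1})\ge 1$ and $n-1\ge 1$; hence \cref{l-large-atom} applies verbatim. For $\val(a)=1$ the diagonal matrix $\begin{bsmallmatrix} a & 0 \\ 0 & 1\end{bsmallmatrix}$ is an atom by \subref{l-basic:atom}. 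Your final step --- deducing $\rho_k(R^\bullet)=\infty$ for $k\ge 2$ from the existence of atoms of unbounded norm valuation via \cref{l-rho} --- is correct and is exactly how the paper concludes.
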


\begin{proof}
  If $\val(a)=1$, then
  \[
  U=\begin{bmatrix} a & 0 \\ 0 & 1 \end{bmatrix}
  \]
  is an atom with $\det(U)=a$.
  If $\val(a) > 1$, then \cref{l-large-atom} implies that
  \[
  U=\begin{bmatrix} a\pi^{-1} + \pi^{n-1} & \pi^n \\ 1 & \pi \end{bmatrix}
  \]
  is an atom. Moreover, $\det(U) = a$. The final statement now follows from \subref{l-rho:rhok}.
\end{proof}

Unlike for the rings studied in Section \ref{s-noneichler}, we are able to give a full classification of the atoms of Eichler orders. Before classifying the atoms of $R$, we take a closer look at the unit group $R^\times$ and investigate how the valuations of entries of matrices in $R$ behave with respect to products.
\begin{proposition} \label{p-unitgroup}
  Let
  \[
  L = \left\{\, \begin{bmatrix} 1 & 0 \\ x & 1 \end{bmatrix} : x \in D \,\right\},\;
  U = \left\{\, \begin{bmatrix} 1 & x\pi^n \\ 0 & 1 \end{bmatrix} : x \in D \,\right\},\;
  \Delta = \left\{\, \begin{bmatrix} \varepsilon & 0 \\ 0 & \eta \end{bmatrix} : \varepsilon, \eta \in D^\times \,\right\}.
  \]
  Then $R^\times = L\Delta U$.
\end{proposition}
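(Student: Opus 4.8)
The plan is to show the two inclusions $L\Delta U \subset R^\times$ and $R^\times \subset L\Delta U$ separately. The first inclusion is essentially a computation: each of the three matrices $\begin{bmatrix} 1 & 0 \\ x & 1 \end{bmatrix}$, $\begin{bmatrix} 1 & y\pi^n \\ 0 & 1 \end{bmatrix}$, and $\begin{bmatrix} \varepsilon & 0 \\ 0 & \eta \end{bmatrix}$ (with $x,y \in D$ and $\varepsilon,\eta \in D^\times$) lies in $R$ by the description \eqref{d-Eichler}, and each has determinant a unit of $D$ (namely $1$, $1$, and $\varepsilon\eta$ respectively), so each lies in $R^\times$ by \cref{l-units-1}. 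Since $R^\times$ is a group, any product of them lies in $R^\times$; hence $L\Delta U \subset R^\times$.

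For the reverse inclusion, I would take an arbitrary unit $A = \begin{bmatrix} a & b\pi^n \\ c & d \end{bmatrix} \in R^\times$ and perform a Gaussian-elimination-style normalization. By \cref{l-units-1} we have $a, d \in D^\times$. First multiply $A$ on the left by $\begin{bmatrix} 1 & 0 \\ -ca^{-1} & 1 \end{bmatrix} \in L$ to clear the $(2,1)$-entry; the resulting matrix has the form $\begin{bmatrix} a & b\pi^n \\ 0 & d' \end{bmatrix}$ for some $d' \in D$, and since this matrix is still a unit, \cref{l-units-1} (or directly its determinant $ad'$) forces $d' \in D^\times$. Next multiply on the right by $\begin{bmatrix} 1 & -a^{-1}b\pi^n \\ 0 & 1 \end{bmatrix} \in U$ (note $-a^{-1}b \in D$, so this is a legitimate element of $U$) to clear the $(1,2)$-entry, arriving at a diagonal matrix $\begin{bmatrix} a & 0 \\ 0 & d' \end{bmatrix} \in \Delta$. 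Unwinding, $A = \begin{bmatrix} 1 & 0 \\ ca^{-1} & 1 \end{bmatrix} \begin{bmatrix} a & 0 \\ 0 & d' \end{bmatrix} \begin{bmatrix} 1 & a^{-1}b\pi^n \\ 0 & 1 \end{bmatrix} \in L\Delta U$, as desired.

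The only subtle point — and the place where the hypothesis $n \ge 2$ (or at least $n \ge 1$) and the specific shape of $R$ matter — is checking that the intermediate matrices used in the elimination genuinely lie in $R$ and that the off-diagonal parameters lie in $D$ rather than merely in $K$. This is where one uses that $a \in D^\times$ (so $a^{-1} \in D$) and that the upper-right entries carry a factor $\pi^n$: for instance $-a^{-1}b\pi^n \in \pi^n D$ as required for membership in $R$, and the elementary matrices in $L$ and $U$ are exactly the ones compatible with \eqref{d-Eichler}. Beyond this bookkeeping there is no real obstacle; the argument is the familiar $LDU$-decomposition adapted to the Eichler order, and no appeal to $n \ge 2$ is needed for this particular statement.
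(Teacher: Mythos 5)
Your proof is correct and matches the paper's argument: the paper likewise invokes \cref{l-units-1} to get $a,d\in D^\times$ and writes down exactly the decomposition $A = \begin{bmatrix} 1 & 0 \\ ca^{-1} & 1\end{bmatrix}\begin{bmatrix} a & 0 \\ 0 & d-ca^{-1}b\pi^n\end{bmatrix}\begin{bmatrix} 1 & a^{-1}b\pi^n \\ 0 & 1\end{bmatrix}$ that your elimination produces. The only difference is that you also spell out the easy inclusion $L\Delta U\subset R^\times$ and the membership checks, which the paper leaves implicit.
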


\begin{proof}
  Let
  \[
  A = \begin{bmatrix} a & b\pi^n \\ c & d \end{bmatrix} \in R^\times.
  \]
  Then $a$, $d \in D^\times$ by \cref{l-units-1} and
  \[
A = \begin{bmatrix} 1 & 0 \\ ca^{-1} & 1\end{bmatrix} \begin{bmatrix} a & 0 \\ 0 & d - ca^{-1}b\pi^n \end{bmatrix} \begin{bmatrix} 1 & a^{-1}b\pi^n \\ 0 & 1 \end{bmatrix}
  \]
  gives the desired decomposition.
\end{proof}

In general, the valuations of the individual entries of a matrix are not additive on products.
Moreover, they are not preserved under associativity.
However, as we shall observe, the valuations in the upper-left and the lower-right corners are preserved under products and associativity if they are small enough.

\begin{lemma} \label{l-add}
  Let $A=A_1 \cdots A_m$ with $A$, $A_1$,~$\ldots\,$,~$A_m \in R^\bullet$.
  \begin{enumerate}
  \item \label{l-add:11} If $\val_{1,1}(A) < n$ then $\val_{1,1}(A) = \val_{1,1}(A_1) + \cdots + \val_{1,1}(A_m)$.
  \item \label{l-add:22} If $\val_{2,2}(A) < n$ then $\val_{2,2}(A) = \val_{2,2}(A_1) + \cdots + \val_{2,2}(A_m)$.
  \end{enumerate}
\end{lemma}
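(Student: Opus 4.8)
The plan is to reduce to the two‑factor case by induction on $m$, and to reduce the statement about $\val_{2,2}$ to the one about $\val_{1,1}$ via the adjugate, since $\adj$ is an anti‑automorphism of $R$ (hence of $R^\bullet$) that interchanges the $(1,1)$‑ and $(2,2)$‑entries. The only input needed beyond bookkeeping is the ultrametric fact that in a valued field, if $\val(y) > \val(x+y)$, then $\val(x) = \val(x+y)$; this follows by applying the ultrametric inequality to $x = (x+y) - y$ and to $x+y$.

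For the heart of the argument I would treat $m = 2$ directly. Write $A = BC$ with
\[
B = \begin{bmatrix} e & f\pi^n \\ g & h \end{bmatrix}, \qquad
C = \begin{bmatrix} p & q\pi^n \\ r & s \end{bmatrix},
\]
so that the $(1,1)$‑entry of $A$ is $ep + fr\pi^n$. The term $fr\pi^n$ has valuation at least $n$. Hence, if $\val_{1,1}(A) = \val(ep + fr\pi^n) < n$, the two summands have distinct valuations, and the ultrametric fact gives $\val_{1,1}(A) = \val(ep) = \val(e) + \val(p) = \val_{1,1}(B) + \val_{1,1}(C)$. Since entry valuations are nonnegative, this also forces $\val_{1,1}(B) \le \val_{1,1}(A) < n$ and $\val_{1,1}(C) < n$, which is exactly what is needed to feed the induction.

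Now I would run the induction on $m$: for $m \ge 3$, set $B = A_1 \cdots A_{m-1}$ and apply the two‑factor case to $A = B A_m$, obtaining $\val_{1,1}(A) = \val_{1,1}(B) + \val_{1,1}(A_m)$ with $\val_{1,1}(B) \le \val_{1,1}(A) < n$; the induction hypothesis applied to $B$ then finishes $\subref{l-add:11}$. For $\subref{l-add:22}$, note $\adj$ maps $R^\bullet$ to $R^\bullet$ (it preserves the shape of the matrices in \cref{d-Eichler} and preserves the determinant up to nothing — in fact $\det(\adj A) = \det A$), that $\adj(A) = \adj(A_m) \cdots \adj(A_1)$, and that $\val_{2,2}(X) = \val_{1,1}(\adj X)$; applying $\subref{l-add:11}$ to the product $\adj(A_m) \cdots \adj(A_1)$ gives $\subref{l-add:22}$ whenever $\val_{2,2}(A) < n$.

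There is no serious obstacle here; the statement is essentially a careful tracking of valuations. The one point that requires attention is that the valuation of the $(1,1)$‑entry of a product is only a lower bound for $\min$ of the two contributing terms in general (cancellation can occur when those valuations coincide), so one must genuinely use the hypothesis $\val_{1,1}(A) < n$ to conclude that the off‑diagonal contribution, which always has valuation $\ge n$, cannot be the dominant or a cancelling term. The same remark is what makes the hypothesis persist down the induction, since each partial product inherits $\val_{1,1} < n$ automatically.
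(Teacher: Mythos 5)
Your proof is correct and follows essentially the same route as the paper: reduce to $m=2$ by induction, observe that the off-diagonal contribution $fr\pi^n$ to the $(1,1)$-entry has valuation at least $n$ so the hypothesis $\val_{1,1}(A)<n$ forces $\val(ep)=\val_{1,1}(A)$, and obtain the $(2,2)$-statement by the adjugate symmetry. You simply spell out the ultrametric step and the persistence of the hypothesis down the induction, which the paper leaves implicit.
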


\begin{proof}
  It suffices to consider the case $m=2$; the general case follows by induction.
  Let $A=BC$ be as in \cref{eq-aprod}.
  Then $\val_{1,1}(A) = \val(a) = \val(ep + fr\pi^n)$.
  By assumption, $\val(a) < n$ and therefore $\val(a) = \val(ep) = \val_{1,1}(B) + \val_{1,1}(C)$.
  The second claim follows by symmetry.
\end{proof}

\begin{lemma} \label{l-assoc-v}
  Let $A$,~$A' \in R^\bullet$ be associated.
  \begin{enumerate}
  \item\label{l-assoc-v:11} If $\val_{1,1}(A) < n$, then $\val_{1,1}(A') = \val_{1,1}(A)$.
  \item\label{l-assoc-v:22} If $\val_{2,2}(A) < n$, then $\val_{2,2}(A') = \val_{2,2}(A)$.
  \end{enumerate}
\end{lemma}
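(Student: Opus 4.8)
The plan is to prove the first assertion and deduce the second by symmetry: $\adj$ is an anti-automorphism of $R$ carrying $R^\bullet$ to $R^\bullet$ and $R^\times$ to $R^\times$, and $\val_{2,2}(X) = \val_{1,1}(\adj(X))$, so if $A$, $A'$ are associated then so are $\adj(A)$, $\adj(A')$, and the second assertion for $A$, $A'$ is exactly the first for $\adj(A)$, $\adj(A')$. So assume $\val_{1,1}(A) < n$ and write $A' = \varepsilon A \delta$ with $\varepsilon$, $\delta \in R^\times$.

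The point is that \cref{l-add} cannot be applied to the triple product $\varepsilon A \delta$ directly, since its hypothesis ``$\val_{1,1}(\cdot) < n$'' is so far known only for $A$ and not for $A'$; instead I would propagate the estimate through one unit factor at a time. By \cref{l-units-1} the $(1,1)$-entry of any unit of $R$ lies in $D^\times$, hence has valuation $0$. Writing $\varepsilon A$ entrywise as in \cref{eq-aprod}, its $(1,1)$-entry is a sum of a term of valuation $\val_{1,1}(\varepsilon) + \val_{1,1}(A) = \val_{1,1}(A)$ and a term lying in $\pi^n D$; since $\val_{1,1}(A) < n$ the first dominates, so $\val_{1,1}(\varepsilon A) = \val_{1,1}(A) < n$. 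Running the identical computation for $(\varepsilon A)\delta$ — whose $(1,1)$-entry is a sum of a term of valuation $\val_{1,1}(\varepsilon A) = \val_{1,1}(A)$ and a term in $\pi^n D$ — gives $\val_{1,1}(A') = \val_{1,1}(\varepsilon A) = \val_{1,1}(A)$, as claimed.

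I expect no real obstacle: the argument is a short direct computation, and the only subtlety is the one noted above, namely that the inequality ``$< n$'' has to be carried along through each unit factor rather than invoked for the full product at once. (Equivalently, one may split the claim into the case of left-multiplication and the case of right-multiplication by a single unit, each a one-line estimate, and then compose the two; alternatively, once $\val_{1,1}(\varepsilon A) < n$ is in hand, one could invoke \cref{l-add} with $m = 2$ for $A' = (\varepsilon A)\delta$, but this still requires the direct check that $\val_{1,1}(A') < n$, so it saves nothing.)
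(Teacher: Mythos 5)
Your proof is correct, and both parts (the direct entrywise propagation and the reduction of \ref{l-assoc-v:22} to \ref{l-assoc-v:11} via the adjugate) are sound. The paper reaches the same conclusion by a slightly slicker route that sidesteps the difficulty you flag: instead of writing $A' = \varepsilon A \delta$, it writes $A = B A' C$ with $B$, $C \in R^\times$, so that the hypothesis $\val_{1,1}(A) < n$ of \cref{l-add} is satisfied by the \emph{product} and the lemma applies at once, giving $\val_{1,1}(A) \ge \val_{1,1}(A')$; this yields $\val_{1,1}(A') < n$, whereupon a second application of \cref{l-add} to $A' = B^{-1} A C^{-1}$ gives the reverse inequality. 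Your version instead re-proves the relevant special case of \cref{l-add} by hand, one unit factor at a time, using \cref{l-units-1} to see that the diagonal entries of units have valuation $0$. The two arguments rest on identical facts (dominance of the diagonal term over the $\pi^n D$ contribution when the valuation is below $n$); the paper's orientation of the association equation just lets it cite \cref{l-add} as a black box rather than repeating the computation. Either write-up is acceptable.
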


\begin{proof}
  We prove \ref*{l-assoc-v:11} and  \ref*{l-assoc-v:22} follows by symmetry.
  By assumption $A=BA'C$ with $B$,~$C \in R^\times$.
  Then \cref{l-add} yields $\val_{1,1}(A) \ge \val_{1,1}(A')$.
  This implies $\val_{1,1}(A') < n$, and applying \cref{l-add} to $A'=B^{-1}AC^{-1}$ yields $\val_{1,1}(A') \ge \val_{1,1}(A)$.
\end{proof}

We are now ready to give a characterization of all atoms of $R^\bullet$.

\begin{theorem} \label{t-atom}
  Let $D$ be a DVR with quotient field $K$, $n \in \bN_{\ge 2}$, and
  \[
    R = \begin{bmatrix} D & \pi^n D \\ D & D \end{bmatrix} \subset M_2(D)
  \]
  an Eichler order of level $n$ in $M_2(K)$.

  An element
  \[
  A = \begin{bmatrix} a & b\pi^n \\ c & d \end{bmatrix} \in R^\bullet
  \]
  is an atom of $R^\bullet$ if and only if one of the following holds:
  \begin{enumerate}[label=\textup{(\Roman*)}]
  \item\label{t-atom:i} $\val(\det(A)) = 1$ \textup{(}equivalently, $\sv(ad)=1$\textup{)}.
  \item\label{t-atom:ii} $\val(b)=\val(c)=0$ and $\val(a) > 0$, $\val(d) > 0$.
  \end{enumerate}
\end{theorem}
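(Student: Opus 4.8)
I want to prove that $A=\begin{bmatrix} a & b\pi^n \\ c & d \end{bmatrix}\in R^\bullet$ is an atom if and only if \ref{t-atom:i} $\val(\det A)=1$ or \ref{t-atom:ii} $\val(b)=\val(c)=0$ together with $\val(a),\val(d)>0$. The two implications ``\ref{t-atom:i} $\Rightarrow$ atom'' and ``\ref{t-atom:ii} $\Rightarrow$ atom'' are exactly \subref{l-basic:atom} and \cref{l-large-atom}, so only the converse — that every atom satisfies \ref{t-atom:i} or \ref{t-atom:ii} — needs work. I would argue contrapositively: assume $A$ satisfies neither, and produce a factorization into two non-units. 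So suppose $\val(\det A)\ge 2$ and \emph{not} ($\val(b)=\val(c)=0$ and $\val(a),\val(d)>0$).

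\textbf{Case split on $\val(b)+\val(c)$.} If $\val(b)+\val(c)>0$, then \cref{l-det-atom} applies directly: $\det(A)$ is not an atom in $D^\bullet$ (since $\val(\det A)\ge 2$), hence $A$ is not an atom. This disposes of everything except the case $\val(b)=\val(c)=0$. In that remaining case, the failure of \ref{t-atom:ii} forces $\val(a)=0$ or $\val(d)=0$; by applying $\adj$ (which is the standard involution and swaps the $(1,1)$ and $(2,2)$ entries, preserving atom-ness by the remarks after \cref{l-pipower}) I may assume $\val(a)=0$, i.e.\ $a\in D^\times$. The remaining obstacle is thus: \emph{show that $A=\begin{bmatrix} a & b\pi^n\\ c& d\end{bmatrix}$ with $a\in D^\times$, $\val(b)=\val(c)=0$, and $\val(\det A)\ge 2$ is not an atom.}

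\textbf{Disposing of the hard case.} Since $a\in D^\times$ I can clear the first column: left-multiplying by the unit $\begin{bmatrix}1&0\\-ca^{-1}&1\end{bmatrix}^{-1}=\begin{bmatrix}1&0\\ca^{-1}&1\end{bmatrix}$ — or equivalently performing the row operation — we may, up to left-associativity, assume $c=0$; then $\det A=ad$ and $\val(d)=\val(\det A)\ge 2$. (Here $b\pi^n$ changes but stays of the form $b'\pi^n$ with $b'\in D^\times$ since $\val(b)=0$ is preserved; and left-associated matrices are simultaneously atoms or not.) Now with $c=0$, $a\in D^\times$, $\val(d)\ge 2$, I factor
\[
\begin{bmatrix} a & b'\pi^n\\ 0 & d\end{bmatrix}
= \begin{bmatrix} a & 0\\ 0 & \pi\end{bmatrix}\begin{bmatrix} 1 & a^{-1}b'\pi^{n}\\ 0 & \pi^{-1}d\end{bmatrix},
\]
which is valid since $\pi^{-1}d\in D$ (as $\val(d)\ge 2\ge 1$) and $a^{-1}b'\in D$, so both factors lie in $R$; the left factor has determinant $a\pi$ of valuation $1$ and the right factor has determinant $\pi^{-1}d$ of valuation $\val(d)-1\ge 1$, so neither is a unit by \cref{l-units-1}. (Alternatively, one can avoid the reduction to $c=0$ and directly check that one of the explicit factorizations in the proof of \cref{l-det-atom} still works when, say, $\val(a)=0,\val(d)\ge 2$ — the case $\val(d)\ge 2$ there is handled by passing to $\adj(A)$, which has $(1,1)$-entry $d$ with $\val(d)\ge 1$, so that $\adj(A)$ is reducible and hence so is $A$.) This completes the contrapositive and the proof.

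\textbf{Where the difficulty lies.} The only genuinely new content beyond \cref{l-det-atom} and \cref{l-large-atom} is the case $\val(b)=\val(c)=0$ with one of $\val(a),\val(d)$ equal to $0$: \cref{l-large-atom} covers $\val(a),\val(d)>0$ and \cref{l-det-atom} covers $\val(b)+\val(c)>0$, but these leave a gap precisely when the corner valuations are ``mixed'' along the anti-diagonal. The resolution is the small observation that $a\in D^\times$ lets one triangularize by a unit row operation, reducing to an upper-triangular matrix whose determinant condition $\val(\det A)\ge 2$ then splits off a determinant-$\pi$ factor. I expect this reduction — making sure the row operation stays inside $R$ and preserves the shape of the $(1,2)$-entry — to be the one spot requiring care, though it is routine; no estimate or combinatorial argument is needed.
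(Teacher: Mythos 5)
Your proof is correct and follows essentially the same route as the paper's: both directions of the forward implication come from \subref{l-basic:atom} and \cref{l-large-atom}, the case $\val(b)+\val(c)>0$ is handled by \cref{l-det-atom}, and the remaining case $\val(b)=\val(c)=0$ with $\val(a)=0$ (the other corner by the involution/symmetry) is settled by the same unit row operation $\begin{bmatrix}1&0\\-ca^{-1}&1\end{bmatrix}A$ that triangularizes $A$. The only cosmetic difference is that you then write out an explicit two-factor decomposition of the resulting upper-triangular matrix, whereas the paper simply re-invokes \cref{l-det-atom} on it; these amount to the same computation.
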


\begin{proof}
  From \subref{l-basic:atom} and \cref{l-large-atom} we already know that $A$ is an atom if it has either of the two stated forms.
  Suppose now that $A$ is an atom.
  We assume $\val(\det(A)) \ge 2$ and show that \ref*{t-atom:ii} holds.
  By \cref{l-det-atom} we must have $\val(b)=\val(c)=0$.
  It remains to show that $\val(a) > 0$ and $\val(d) > 0$.

  Assume to the contrary that $\val(a) = 0$.
  Then $\val(d) \ge 2$ and
  \[
  \begin{bmatrix} 1 & 0 \\ -ca^{-1} & 1 \end{bmatrix} A = \begin{bmatrix} a & b\pi^n \\ 0 & d - ca^{-1}b\pi^n \end{bmatrix}.
  \]
  Applying \cref{l-det-atom}, we see that the matrix on the right hand side of the equation is not an atom since $\val(d - ca^{-1}b\pi^n) \ge \min\{\val(d),n\} \ge 2$.
  Thus $A$, an associate of this matrix, is also not an atom.

The case $\val(d) = 0$ follows by symmetry. Thus $\val(a), \val(d)>0$.
\end{proof}

Note that the only atoms $A$ with $\val(\det(A)) > n$ are those of type \ref{t-atom:ii} with $\val(a) + \val(d) = n$.
For this type of atom, the determinant can have arbitrarily large valuation as we have already seen in \cref{l-eichler-longatom}.

The Jacobson radical of $R$ is
\[
J(R)=
\begin{bmatrix}
  \pi D & \pi^n D \\
  D     & \pi D
\end{bmatrix}.
\]
Thus the atoms of type \ref{t-atom:ii} are precisely the ones contained in $J(R)$.

\begin{corollary} \label{c-atomr}
For $m \in \bN_0$, let $\cR(m) \subset D$ be a system of representatives for $D/\pi^m D$.
Every atom of $R^\bullet$ is right associated to precisely one of the following atoms:
\begin{enumerate}
\item\label{c-atomr:1}
  $\displaystyle
  \begin{bmatrix} \pi & \lambda \pi^n \\ 0 & 1 \end{bmatrix} \qquad \text{with $\lambda \in \cR(1)$}.
  $
\item\label{c-atomr:2}
  $\displaystyle
  \begin{bmatrix} 1 & 0 \\ \lambda & \pi \end{bmatrix} \qquad \text{with $\lambda \in \cR(1)$}.
  $
\item\label{c-atomr:3}
  $\displaystyle
  \begin{bmatrix} \varepsilon \pi^m & \pi^n \\ 1 & \delta \pi^{m'}\end{bmatrix}
  $
  \quad
  with  $1\leq m, m'<n$ such that $m+m' < n$, $\varepsilon \in D^\times \cap \cR(m')$, and $\delta \in D^\times \cap \cR(m)$.
\item\label{c-atomr:4}
  $\displaystyle
  \begin{bmatrix} \varepsilon \pi^m & \pi^n \\ 1 & \delta \pi^{m'}\end{bmatrix}
  $
  \quad
  with  $1\leq m, m'<n$ such that $m+m' > n$, $\varepsilon \in D^\times \cap \cR(n-m)$, and $\delta \in D^\times \cap \cR(n-m')$.
\item\label{c-atomr:5}
  $\displaystyle
  \begin{bmatrix} \varepsilon \pi^m & \pi^n \\ 1 & 0\end{bmatrix}
  $
  \quad
  with $1 \le m < n$ and $\varepsilon \in D^\times \cap \cR(n-m)$.
\item\label{c-atomr:6}
  $\displaystyle
  \begin{bmatrix} 0 & \pi^n \\ 1 & \delta \pi^{m'}\end{bmatrix}
  $
  \quad
  with $1 \le m' < n$ and $\delta \in D^\times \cap \cR(n-m')$.
\item\label{c-atomr:7}
  $\displaystyle
  \begin{bmatrix} 0 & \pi^n \\ 1 & 0 \end{bmatrix}.
  $
\item\label{c-atomr:8}
  $\displaystyle
  \begin{bmatrix} \varepsilon \pi^m & \pi^n \\ 1 & (\varepsilon^{-1} + \pi^k \delta)\pi^{m'}\end{bmatrix}
  $
  \quad
  with $1 \leq m, m' < n$ such that $n = m+m'$,  $k \in \bN_0$, $\varepsilon \in D^\times \cap \cR(m'+k)$, and $\delta \in D^\times \cap \cR(m)$.
\end{enumerate}
\end{corollary}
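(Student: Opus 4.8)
For this corollary, the plan is to combine the atom classification of \cref{t-atom} with the structure of the unit group $R^\times$. By \cref{t-atom} an atom $A = \begin{bmatrix} a & b\pi^n \\ c & d \end{bmatrix} \in R^\bullet$ either has $\val(\det(A)) = 1$ (type~(I)), or satisfies $\val(b) = \val(c) = 0$, $\val(a) \ge 1$, $\val(d) \ge 1$, and then also $\val(\det(A)) \ge 2$ (type~(II)); conversely each matrix listed in \ref{c-atomr:1}--\ref{c-atomr:8} plainly satisfies the corresponding criterion of \cref{t-atom}, hence is an atom, so only existence and uniqueness of the representatives are at issue. Right multiplication by a unit of $R$ does not change $\val(\det(A))$ (the reduced norm of a unit is a unit), and by \cref{l-assoc-v} it does not change $\val_{1,1}(A)$ or $\val_{2,2}(A)$ as long as these are $< n$; these are the invariants I will use for uniqueness. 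For existence I will use column operations, i.e.\ right multiplication by the generators $L$, $\Delta$, $U$ of \cref{p-unitgroup}, or, more flexibly, by a general unit $\begin{bmatrix} \alpha & \beta\pi^n \\ \gamma & \delta \end{bmatrix}$ with $\alpha, \delta \in D^\times$ (\cref{l-units-1}).

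For a type~(I) atom, $\val(\det(A)) = 1$ forces $\val(ad) = 1$, so exactly one of $\val(a)$, $\val(d)$ is $0$ and the other is $1$. If $\val(a) = 0$, I would clear the $(1,2)$-entry by a $U$-operation, rescale the columns by a $\Delta$-operation so that the $(2,2)$-entry becomes $\pi$, and finally reduce the $(2,1)$-entry modulo $\pi$ by an $L$-operation, landing in form~\ref{c-atomr:2}; the case $\val(d) = 0$ gives form~\ref{c-atomr:1} by the symmetric procedure. Within each of these two families a short direct computation shows that distinct $\lambda$ give non-associated atoms, while $\val_{1,1}$ separates form~\ref{c-atomr:1} from form~\ref{c-atomr:2}, and $\val(\det(A)) = 1$ separates both from every type~(II) atom.

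For a type~(II) atom, a $\Delta$-operation normalizes $b = c = 1$. A general unit keeping the matrix in the shape $\begin{bmatrix} \ast & \pi^n \\ 1 & \ast \end{bmatrix}$ is forced to have $\alpha = 1 - d\gamma$ and $\delta = 1 - a\beta$ (both then automatically in $D^\times$), and sends $\begin{bmatrix} a & \pi^n \\ 1 & d \end{bmatrix}$ to $\begin{bmatrix} a - \gamma(ad-\pi^n) & \pi^n \\ 1 & d - \beta(ad-\pi^n) \end{bmatrix}$ for arbitrary $\gamma, \beta \in D$. Since $\val(ad - \pi^n) = v$, where $v := \val(\det(A))$ is a right-association invariant, this shows that every type~(II) atom is right associated to exactly one matrix $\begin{bmatrix} a' & \pi^n \\ 1 & d' \end{bmatrix}$ with $a', d' \in \cR(v)$, and also that two matrices of this shape whose diagonal entries lie in $\cR(v)$ for a common $v$ are right associated only if they coincide. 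Writing $m = \val(a')$ and $m' = \val(d')$ in $\bN \cup \{\infty\}$: if $a' = 0$ or $d' = 0$ then $v = n$ and one lands in forms \ref{c-atomr:5}, \ref{c-atomr:6}, \ref{c-atomr:7}; if $a', d' \ne 0$ and $m + m' < n$ then $v = m + m'$ and, writing $a' = \varepsilon\pi^m$, $d' = \delta\pi^{m'}$, the units $\varepsilon, \delta$ may be reduced to the residue systems of form~\ref{c-atomr:3}; if $a', d' \ne 0$ and $m + m' > n$ then $v = n$, giving form~\ref{c-atomr:4}. Because the triple $\bigl(\val(\det(A)), \val_{1,1}(A), \val_{2,2}(A)\bigr)$ is a right-association invariant that separates all of these forms and the values of $m$, $m'$, and because the displayed description of the right-association class then pins down $\varepsilon$ and $\delta$ (equivalently $\lambda$), each of these atoms is hit exactly once.

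The remaining and most delicate case is $a', d' \ne 0$ with $m + m' = n$: here $v = \val(\det(A)) = n + k$, where $k := \val\bigl(a'd'/\pi^n - 1\bigr) \in \bN_0$ (finite since $\det(A) \ne 0$), so the modulus $\pi^v$ by which $a'$ and $d'$ should be reduced itself depends on $a', d'$. This is exactly what the parametrization of form~\ref{c-atomr:8} is designed to untangle: with $\varepsilon = a'/\pi^m \in D^\times$ and $a'd'/\pi^n - 1 = \pi^k w$, $w \in D^\times$, one gets $d' = \bigl(\varepsilon^{-1} + \pi^k(\varepsilon^{-1}w)\bigr)\pi^{m'}$, so that reducing $d'$ modulo $\pi^{n+k}$ amounts to reducing $\varepsilon^{-1}w$ modulo $\pi^m$, and reducing $a'$ modulo $\pi^{n+k}$ amounts to reducing $\varepsilon$ modulo $\pi^{m'+k}$ --- which leaves $k$ unchanged because $m' \ge 1$. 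As $v = n + k$ is a right-association invariant, $A$ determines $k$, then $m$ and $m'$ (by \cref{l-assoc-v}), and finally $\varepsilon$ together with $\delta := \varepsilon^{-1}w$ (by the class description), so form~\ref{c-atomr:8} also occurs exactly once. Assembling these cases yields the eight families, each right associated to the given atom precisely once. I expect the main obstacle to be precisely this last case, where the interdependence between the reduction modulus and the entries being reduced is what forces the somewhat intricate shape of form~\ref{c-atomr:8}.
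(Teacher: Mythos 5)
Your proposal is correct and follows essentially the same route as the paper: reduce type~(I) atoms to the forms \labelcref{c-atomr:1,c-atomr:2} by explicit unit operations, normalize a type~(II) atom to $\begin{bsmallmatrix} a & \pi^n \\ 1 & d\end{bsmallmatrix}$, compute that the units preserving this shape are exactly those with $\alpha = 1-d\gamma$, $\delta = 1-a\beta$, deduce that the right-association class is characterized by $a$, $d$ modulo $\pi^{\val(\det(A))}$, and split on $m+m'$ versus $n$ — including the same resolution of the $m+m'=n$ case via $k=\val(\det(A))-n$ and $\varepsilon' = \varepsilon^{-1}+\pi^k\delta$. The separation of the classes via the right-association invariants $\val(\det(A))$, $\val_{1,1}$, $\val_{2,2}$ (\cref{l-assoc-v}) also matches the paper's argument.
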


\begin{proof}
From \cref{l-assoc-v}, we can immediately note that if $A$ belongs to class ($i$) and $B$ belongs to class ($j$) with $i\not=j$, then $A$ is not associated to $B$.

  Let
  \[
    U = \begin{bmatrix} a & b \pi^n \\ c & d \end{bmatrix}
  \]
  be an atom of $R^\bullet$.

  Suppose that $U$ is of type \labelcref{t-atom:i}.
  If $\val(a)=1$ and $\val(d) = 0$, then
  \[
  U \begin{bmatrix} 1 & 0 \\ 0 & d^{-1} \end{bmatrix} \begin{bmatrix} 1 & 0 \\ -c & 1 \end{bmatrix} = \begin{bmatrix} a - bcd^{-1}\pi^n & bd^{-1} \pi^n \\ 0 & 1 \end{bmatrix}.
  \]
  Now $\val(a-bcd^{-1}\pi^n) = \val(a) = 1$ and thus, by dividing the first column by a suitable unit of $D^\times$, we see that $U$ is right associated to
  \[
    \begin{bmatrix} \pi & bd^{-1} \pi^n \\ 0 & 1 \end{bmatrix}.
  \]
  If $\lambda \in \cR(1)$ with $\lambda \equiv bd^{-1} \mod \pi D$ and $x \in D$ with $\lambda = bd^{-1} + \pi x$, then
  \[
    \begin{bmatrix} \pi & bd^{-1}\pi^n \\ 0 & 1 \end{bmatrix} \begin{bmatrix} 1 & x\pi^n \\ 0  & 1 \end{bmatrix} = \begin{bmatrix} \pi & \lambda \pi^n \\ 0 & 1 \end{bmatrix},
  \]
  so that $U$ is right associated to an atom in class \labelcref*{c-atomr:1}.

  We now show that no two atoms in class \labelcref*{c-atomr:1} are right associated to each other.
  Let $\lambda \in \cR(1)$, $e$,~$h \in D^\times$ and $f$,~$g \in D$.
  Then
  \[
    \begin{bmatrix} \pi & \lambda \pi^n \\ 0 & 1 \end{bmatrix} \begin{bmatrix} e & f\pi^n \\ g & h \end{bmatrix} = \begin{bmatrix} e\pi + g \lambda \pi^n & (f\pi + h \lambda) \pi^n \\ g & h \end{bmatrix}.
  \]
  For this to have the same form as a matrix in class \labelcref*{c-atomr:1}, we must have $g=0$, $h=1$, and subsequently $e=1$.
  Finally, since $f \pi + \lambda \equiv \lambda \mod \pi D$, the condition $f \pi + \lambda \in \cR(1)$ forces $f=0$.

  If $\val(a)=0$ and $\val(d) = 1$, then one shows analogously that $U$ is right associated to precisely one atom in class \labelcref*{c-atomr:2}.

  Suppose now that $U$ is an atom of type \labelcref{t-atom:ii}.
  That is, $\val(b)=\val(c)=0$ and $\val(a) > 0$, $\val(d) > 0$.
  Dividing the columns by suitable units we may, without restriction, assume
  \[
    U = \begin{bmatrix} a & \pi^n \\ 1 & d \end{bmatrix}.
  \]
  Now let $e$, $h \in D^\times$ and $f$, $g \in D$.
  Then
  \[
    U \begin{bmatrix} e & f\pi^n \\ g & h \end{bmatrix} =
    \begin{bmatrix}  ae + g\pi^n & (af+h)\pi^n \\ e+dg & f\pi^n + dh \end{bmatrix}.
  \]
  This right associate again has the same form as $U$ if and only if $1=e+dg=af+h$.
  Then $ae + g\pi^n = a + g (\pi^n - ad)$ and $f\pi^n + dh = d + f (\pi^n - ad)$.
  Note that $\pi^n - ad = -\det(U)$.
  Thus,
  \[
    \begin{bmatrix} a' & \pi^n \\ 1 & d' \end{bmatrix} \in R^\bullet
  \]
  with $a'$,~$d' \in D$ is a right associate of $U$ if and only if $a' \equiv a \mod \pi^{\val(\det(U))} D$ and $d' \equiv d \mod \pi^{\val(\det(U))}D$.

  If $\val(a) + \val(d) < n$, then $\val(\det(U)) = \val(a) + \val(d)$, and $U$ is right associated to precisely one of the atoms in class \labelcref*{c-atomr:3}.

  If $\val(a) + \val(d) > n$, then $\val(\det(U)) = n$, and $U$ is right associated to precisely one of the atoms in one of the classes \labelcref*{c-atomr:4,c-atomr:5,c-atomr:6,c-atomr:7}, depending on whether or not $\val(a) < n$ or $\val(d) < n$.

  Finally, suppose that $n = \val(a) + \val(d)$.
  Let $m=\val(a)$, $m' = \val(d)$, and $k = \val(\det(U)) - n \in \bN_0$.
  Then $a = \varepsilon \pi^m$ and $d = \varepsilon' \pi^{m'}$ with $\varepsilon$,~$\varepsilon' \in D^\times$.
  Note that $k=\val(1-\varepsilon\varepsilon')=\val(\varepsilon^{-1} - \varepsilon')$.
  Hence $\varepsilon' = \varepsilon^{-1} + \pi^k \delta$ for some $\delta \in D^\times$.
  Since $a$ and $d$ are determined up to congruence modulo $\pi^{n+k}$, we can pick $\varepsilon \in \cR(n+k-m) = \cR(m'+k)$ and $\delta \in \cR(n-m') = \cR(m)$.
  Thus $U$ is right associated to an atom in class \labelcref*{c-atomr:8}.
  The congruence condition also guarantees that no two atoms listed above are right associated.
\end{proof}

\begin{corollary} \label{c-atom}
  Any atom of $R^\bullet$ is \textup{(}two-sided\textup{)} associated to precisely one of
  \[
    \begin{bmatrix} \pi & 0 \\ 0 & 1 \end{bmatrix}, \begin{bmatrix} 1 & 0 \\ 0 & \pi \end{bmatrix},
  \]
  or one of the atoms in one of the classes \labelcref*{c-atomr:3,c-atomr:4,c-atomr:5,c-atomr:6,c-atomr:7,c-atomr:8} of \cref{c-atomr}.
\end{corollary}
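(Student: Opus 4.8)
The plan is to derive \cref{c-atom} from \cref{c-atomr} by analysing how the right-associate representatives listed there amalgamate under the coarser relation of two-sided association. Since right association implies two-sided association, every atom of $R^\bullet$ is two-sided associated to one of the atoms in classes \labelcref*{c-atomr:1,c-atomr:2,c-atomr:3,c-atomr:4,c-atomr:5,c-atomr:6,c-atomr:7,c-atomr:8}, so it suffices to show that (i) classes \labelcref*{c-atomr:1} and \labelcref*{c-atomr:2} each collapse to a single two-sided class, represented respectively by $\begin{bmatrix}\pi&0\\0&1\end{bmatrix}$ and $\begin{bmatrix}1&0\\0&\pi\end{bmatrix}$, and that (ii) the resulting list — these two diagonal atoms together with all atoms in classes \labelcref*{c-atomr:3,c-atomr:4,c-atomr:5,c-atomr:6,c-atomr:7,c-atomr:8} — is irredundant, i.e.\ no two of its members are two-sided associated.

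Step (i) I would do by hand: for $\lambda \in \cR(1)$ the matrix $\begin{bmatrix}1&-\lambda\pi^n\\0&1\end{bmatrix}$ lies in $R^\times$ and $\begin{bmatrix}1&-\lambda\pi^n\\0&1\end{bmatrix}\begin{bmatrix}\pi&\lambda\pi^n\\0&1\end{bmatrix}=\begin{bmatrix}\pi&0\\0&1\end{bmatrix}$, so every atom in class \labelcref*{c-atomr:1} is two-sided associated to $\begin{bmatrix}\pi&0\\0&1\end{bmatrix}$; the same computation with $\begin{bmatrix}1&0\\-\lambda&1\end{bmatrix}\in R^\times$ (or the $\adj$-symmetry) handles class \labelcref*{c-atomr:2}. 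For the irredundancy in (ii) the idea is to exhibit enough invariants of the two-sided association class. Since $\det$ is multiplicative and units have $\val(\det)=0$, the quantity $\val(\det A)$ is a two-sided-association invariant; this separates the type-I atoms (classes \labelcref*{c-atomr:1,c-atomr:2} and the two diagonal atoms, all with $\val(\det)=1$) from the type-II atoms (classes \labelcref*{c-atomr:3,c-atomr:4,c-atomr:5,c-atomr:6,c-atomr:7,c-atomr:8}, all with $\val(\det)\ge 2$ by the determinant computations in the proof of \cref{c-atomr}). By \cref{l-assoc-v}, $\val_{1,1}$ and $\val_{2,2}$ are two-sided invariants whenever they are $<n$, and (taking the contrapositive of \cref{l-assoc-v} applied to an associate) the property ``$\val_{1,1}\ge n$'' is also invariant; likewise for $\val_{2,2}$. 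Since $n\ge 2$, this distinguishes $\begin{bmatrix}\pi&0\\0&1\end{bmatrix}$ from $\begin{bmatrix}1&0\\0&\pi\end{bmatrix}$, and among type-II atoms it pins down the exponents $m=\val_{1,1}$, $m'=\val_{2,2}$ (or records the vanishing of the corresponding entry in the normal form), hence which of the classes \labelcref*{c-atomr:3,c-atomr:4,c-atomr:5,c-atomr:6,c-atomr:7,c-atomr:8} an atom belongs to; for class \labelcref*{c-atomr:8} the remaining parameter $k$ is then read off from $\val(\det)=n+k$.

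The main obstacle is the final part of (ii): showing that two type-II atoms from \cref{c-atomr} with the same exponents (and, for class \labelcref*{c-atomr:8}, the same $k$) but different residues $\varepsilon$, $\delta$ are never two-sided associated — equivalently, that on type-II atoms two-sided association coincides with right association, so that \cref{c-atomr} already gives an irredundant list. I would attack this by reducing, via \cref{p-unitgroup}, to understanding the biaction of $L$, $\Delta$ and $U$ on the normal forms $\begin{bmatrix}a&\pi^n\\1&d\end{bmatrix}$ used in the proof of \cref{c-atomr}: multiplication by an element of $\Delta$ merely rescales a row and is reabsorbed by the column-normalization and the choice of $\cR$-representative, while multiplication by an element of $L$ or $U$ alters $a$ and $d$ in a controlled way (adding a multiple of $\pi^n$ in one corner, then renormalizing the columns). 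The point to verify is that after renormalizing, $a$ and $d$ are unchanged modulo $\pi^{\val(\det)}$, which — by the right-associativity criterion established in the proof of \cref{c-atomr} — is exactly what keeps one inside the same right-associate class; the $\adj$-symmetry (together with \cref{l-add}) should halve the case analysis. Class \labelcref*{c-atomr:8}, where $\val(\det)=n+k$ can exceed $n$ so that the governing congruence is finer than $\pi^n$, is the delicate case and I expect it to be where the bookkeeping is most demanding.
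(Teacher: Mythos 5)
Your plan has the same architecture as the paper's proof: collapse classes \labelcref*{c-atomr:1} and \labelcref*{c-atomr:2} by explicit unit multiplications, separate the remaining classes and their exponents using the invariants $\val(\det(\cdot))$, $\val_{1,1}$, $\val_{2,2}$ of \cref{l-assoc-v}, and then show that within a fixed class no two listed representatives are two-sided associated. The first two steps are correct. The genuine gap is the last step, which you defer to a verification you do not carry out --- and that verification in fact fails. Left-multiplying the normal form $\begin{bsmallmatrix}a&\pi^n\\1&d\end{bsmallmatrix}$ by the unit $\begin{bsmallmatrix}1&f\pi^n\\0&1\end{bsmallmatrix}$ and renormalizing the columns replaces $a$ by $a+f\pi^n$ and $d$ by $d(1+fd)^{-1}$; in class \labelcref*{c-atomr:8} one has $\val(\det)=n+k$, so for $k\ge1$ and $\val(f)<k$ the new $a$ is \emph{not} congruent to the old one modulo $\pi^{\val(\det)}$, and the resulting matrix is right associated to a \emph{different} listed representative. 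Concretely, take $D=\bZ_{(3)}$, $\pi=3$, $n=2$, $\cR(j)=\{0,\dots,3^j-1\}$, and the two class-\labelcref*{c-atomr:8} representatives $W=\begin{bsmallmatrix}3&9\\1&12\end{bsmallmatrix}$ (with $m=m'=k=1$, $\varepsilon=\delta=1$) and $W'=\begin{bsmallmatrix}12&9\\1&39/4\end{bsmallmatrix}$ (with $\varepsilon=4$, $\delta=1$). Then
\[
W' \;=\; \begin{bmatrix}1&9\\0&1\end{bmatrix} W \begin{bmatrix}1&-153/4\\0&4\end{bmatrix},
\]
and both outer factors lie in $R^\times$ by \cref{l-units-1}. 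Hence $W$ and $W'$ are two-sided associated even though they are neither left nor right associated, and the irredundancy you are trying to prove is false. (A similar phenomenon already occurs in class \labelcref*{c-atomr:3} whenever $m\ne m'$.)

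You should not conclude that a cleverer computation would have closed the gap: the paper's own proof has the same defect. It verifies that the listed atoms in classes \labelcref*{c-atomr:3,c-atomr:4,c-atomr:5,c-atomr:6,c-atomr:7,c-atomr:8} are pairwise non-right-associated (from \cref{c-atomr}) and pairwise non-left-associated (via the adjugate), but a two-sided association $V=EUF$ with both $E$ and $F$ nontrivial is excluded by neither check, and the example above shows that it genuinely occurs. What your invariant analysis actually establishes is only that $\val(\det)$, together with $\val_{1,1}$ and $\val_{2,2}$ (or the property of their being $\ge n$), is constant on two-sided classes; to obtain a correct statement one would have to quotient the parameter sets of \cref{c-atomr} further, by the orbits of the renormalized left $R^\times$-action described above, rather than assert that distinct right-association representatives remain distinct under two-sided association.
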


\begin{proof}
  As we have already noted in the proof of \cref{c-atomr}, atoms listed in the different classes of \cref{c-atomr} are not associated.
  It is easy to see that all the atoms listed in class \labelcref*{c-atomr:1} are left associated.
  The same is true for the atoms listed in class \labelcref*{c-atomr:2}.

  For the remaining classes, it can be verified as in the proof of \ref{c-atomr} that none of the atoms listed are left associated.
  Alternatively, one may argue by symmetry using the involution:
  If two such atoms $U$ and $V$ are left associated, then $-\adj(U)$ and $-\adj(V)$ are right associated, and replacing the system of representatives $\cR(m)$ by $-\cR(m)$, again of a form as listed.
  Thus we conclude $U=V$.
\end{proof}

We now work towards a result on sets of lengths.
For this we need two more preparatory lemmas.
The first technical lemma says that $A' \in R^\bullet$ is always associated to a matrix in which either all components $\val_{2,1}(A')$, $\val_{1,1}(A')$,~$\val_{1,2}(A')$ are of roughly the same magnitude, or all components $\val_{2,1}(A')$, $\val_{2,2}(A')$,~$\val_{1,2}(A')$ are of roughly the same magnitude.

\begin{lemma} \label{l-special}
  Every $A' \in R^\bullet$ is associated to an element
  \[
    A = \begin{bmatrix} a & b\pi^n \\ c & d \end{bmatrix} \in R^\bullet
  \]
  satisfying
  \[
    \val(c) \le \min\{ \val(a), \val(d) \} \le \val(b) + n \le \min\{ \val(a), \val(d) \} + n \le \val(c) + 2n.
  \]
\end{lemma}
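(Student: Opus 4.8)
The plan is to bring $A'$ into the required shape by elementary row and column operations over $D$, i.e.\ by multiplying $A'$ on the left and on the right by the generators of $R^\times$ from \cref{p-unitgroup}: the lower unipotents $\begin{bmatrix}1&0\\x&1\end{bmatrix}$ add a $D$-multiple of the first row to the second (from the left) or of the second column to the first (from the right); the upper unipotents $\begin{bmatrix}1&y\pi^n\\0&1\end{bmatrix}$ add a $\pi^n D$-multiple of the second row to the first (from the left) or of the first column to the second (from the right); and the diagonal units rescale rows and columns by units of $D$. I will also exploit two symmetries that preserve association classes and each of the four inequalities to be proved, so that they may be invoked in a ``without restriction'': the adjugate, which interchanges $a$ and $d$ and fixes $\val(b)$ and $\val(c)$; and the map $X\mapsto\begin{bmatrix}0&\pi^n\\1&0\end{bmatrix}X\begin{bmatrix}0&\pi^n\\1&0\end{bmatrix}^{-1}$, which sends $\begin{bmatrix}x&y\pi^n\\z&w\end{bmatrix}$ to $\begin{bmatrix}w&z\pi^n\\y&x\end{bmatrix}$ and is therefore a ring automorphism of $R$ (it stabilizes $R$ even though $\begin{bmatrix}0&\pi^n\\1&0\end{bmatrix}\notin R^\times$); composed with the adjugate it gives an association-preserving bijection of $R^\bullet$ interchanging $\val(b)$ and $\val(c)$ while fixing $\val(a)$ and $\val(d)$.

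After a preliminary normalization ensuring $b\ne 0\ne c$ --- if $c=0$ then $ad=\det(A')\ne 0$, so $d\ne 0$, and adding the second column to the first makes the $(2,1)$-entry $d$; if thereafter $b=0$ then again $d\ne 0$, and adding $\pi^n$ times the second row to the first makes the $(1,2)$-entry $d\pi^n$ without touching $c$ --- I would choose, among all associates of $A'$, one with $\min\{\val(b),\val(c)\}$ as small as possible, and, among those, one with $\min\{\val(a),\val(d)\}$ as small as possible. Using the two symmetries, one may assume $\val(c)\le\val(b)$ and $\val(a)\le\val(d)$; set $\gamma=\val(c)$ and $s=\val(a)$, so that $\gamma=\min\{\val(b),\val(c)\}$ and $s=\min\{\val(a),\val(d)\}$ are the two minimal values.

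Each of the four inequalities then follows from an ``if it fails, some operation contradicts minimality'' argument, carried out in the order dictated by which bounds are needed. (i) If $\gamma>s$, then adding a unit multiple of the first row to the second replaces $c$ by something of valuation $s<\gamma$ while keeping $\val(a)=s$ and $\val(d)\ge s$; this contradicts the choice of $\gamma$, so $\gamma\le s$, giving $\val(c)\le\min\{\val(a),\val(d)\}$. (ii) If $s>\gamma+n$, then adding $\pi^n$ times the second row to the first sends $\val(a)$ to $\gamma+n<s$ while (using $\val(d)\ge s\ge\gamma$) leaving $\val(c)=\gamma$ and $\min\{\val(b),\val(c)\}=\gamma$ intact; this contradicts the choice of $s$, so $s\le\gamma+n$, giving $\min\{\val(a),\val(d)\}+n\le\val(c)+2n$. (iii) If $\val(b)>s$, then adding a unit multiple of the first column to the second makes $\val(b)=s$ while (using $\gamma+n\ge s$ from (ii)) keeping $\val(d)\ge s$, $\val(a)=s$ and $\val(c)=\gamma$, so we remain in both minimality classes; hence without restriction $\val(b)\le s$, giving $\val(b)+n\le\min\{\val(a),\val(d)\}+n$. (iv) Finally, if $\val(b)<s-n$ then $\val(b)<(\gamma+n)-n=\gamma=\min\{\val(b),\val(c)\}\le\val(b)$, which is absurd; hence $s\le\val(b)+n$, giving $\min\{\val(a),\val(d)\}\le\val(b)+n$. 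Concatenating the four inequalities yields the asserted chain.

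I expect the main difficulty to be bookkeeping rather than conceptual depth: at each step one must check that the operation used neither raises $\min\{\val(b),\val(c)\}$ above $\gamma$ nor lowers $\min\{\val(a),\val(d)\}$ below $s$ --- which is exactly why the two minimizations must be performed in the stated order and why the bounds $\val(d)\ge s$ and $\gamma+n\ge s$ have to be threaded through the later steps --- and that the two symmetries genuinely preserve both the minimality conditions and the target inequalities. The one genuinely separate point, that a priori some associate might have an entry forced to vanish, is disposed of by the preliminary normalization.
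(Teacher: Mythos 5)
Your argument is correct, but it takes a genuinely different route from the paper's. The paper proves the lemma by an explicit four-stage reduction: starting from $A'$ it applies the row and column operations coming from $R^\times=L\Delta U$ (\cref{p-unitgroup}) to produce matrices $A_1,A_2,A_3,A_4$, at each stage securing one more inequality and then checking by hand that the previously established ones survive. You instead run an extremal argument: among all associates you fix one minimizing $\min\{\val(b),\val(c)\}$ and, subject to that, $\min\{\val(a),\val(d)\}$, and derive each inequality by showing that its failure would let a single elementary operation beat the minimum. Both proofs use exactly the same toolbox; yours trades most of the ``earlier inequalities are preserved'' bookkeeping for appeals to minimality, at the price of justifying the lexicographic minimizer (finiteness of $\gamma$ and $s$, which your normalization and your step (ii) do supply) and the two symmetry reductions. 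The latter are legitimate: the conjunction of the four inequalities is symmetric under $b\leftrightarrow c$ and $a\leftrightarrow d$, and both the adjugate and conjugation by $\begin{bsmallmatrix}0&\pi^n\\1&0\end{bsmallmatrix}$ preserve association classes and the values of $\gamma$ and $s$, so proving the statement for the transformed class and pulling back is sound. One phrasing point to fix in step (iii): the admissible operation adds a $\pi^nD$-multiple of the first column to the second, so the $(1,2)$-entry changes from $b\pi^n$ to $(b+ax)\pi^n$ with $x\in D^\times$; adding a literal unit multiple of the first column would leave $R$. With that read, the net effect on the coefficient $b$ is exactly what you use, and the argument goes through.
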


\begin{proof}
  By \cref{p-unitgroup}, beginning with a matrix $B \in R^\bullet$, we can add a multiple of the second column to the first column, we can add a $\pi^n$-multiple of the first column to the second column, we can add a multiple of the first row to the second row, and we can add a $\pi^n$-multiple of the second row to the first row, resulting in a matrix $B' \in R^\bullet$ that is associated to $B$.

  Since $\val_{1,2}(A) = \val(b) + n$, we need to show that $A'$ can be transformed into $A$ such that
  \[
    \begin{split}
      \val_{2,1}(A) &\le \min\{ \val_{1,1}(A), \val_{2,2}(A) \} \le \val_{1,2}(A)\\ &\le \min\{ \val_{1,1}(A), \val_{2,2}(A) \} + n \le \val_{2,1}(A) + 2n.
    \end{split}
  \]

  We proceed to transform $A'$ as follows.
  First, if $\val_{1,1}(A') > \val_{1,2}(A')$, we add the second column to the first one.
  Similarly, if $\val_{2,2}(A') > \val_{1,2}(A')$, we add the first row to the second one.
  This yields a matrix $A_1$ with $\val_{1,1}(A_1) \le \val_{1,2}(A_1)$ and $\val_{2,2}(A_1) \le \val_{1,2}(A_1)$.
  Now, if $\val_{2,1}(A_1) > \min\{ \val_{1,1}(A_1), \val_{2,2}(A_1) \}$, then, adding either the first row to the second row, or the second column to the first column (depending on which of $\val_{1,1}(A_1)$ and $\val_{2,2}(A_1)$ is minimal), yields a matrix $A_2$ with $\val_{2,1}(A_2) \le \min\{ \val_{1,1}(A_1), \val_{2,2}(A_1) \} \le \val_{1,2}(A_1)$.
  (If $\val_{2,1}(A_1) \le \min\{ \val_{1,1}(A_1), \val_{2,2}(A_1) \}$ we simply set $A_2=A_1$.)
  Note that $\min\{\val_{1,1}(A_2), \val_{2,2}(A_2) \} = \min\{ \val_{1,1}(A_1), \val_{2,2}(A_1) \}$, since the minimal value remains unchanged.
  Thus $A_2$ satisfies the first two inequalities, that is,
  \[
  \val_{2,1}(A_2) \le \min\{ \val_{1,1}(A_2), \val_{2,2}(A_2) \} \le \val_{1,2}(A_2).
  \]

  Now, if $\val_{1,1}(A_2) > \val_{2,1}(A_2) + n$, we add $\pi^n$ times the second row to the first.
  Similarly, if $\val_{2,2}(A_2) > \val_{2,1}(A_2) + n$, we add $\pi^n$ times the first column to the second.
  The resulting matrix $A_3$ satisfies $\val_{1,1}(A_3) \le \val_{2,1}(A_3)+n$ and $\val_{2,2}(A_3) \le \val_{2,1}(A_3) + n$ by construction.
  Thus $A_3$ satisfies the last inequality.
  Since the valuations in the upper-left and the lower-right corner cannot have increased, we have $\min\{ \val_{1,1}(A_3), \val_{2,2}(A_3) \} \le \min\{\val_{1,1}(A_2), \val_{2,2}(A_2) \}$.
  Thus
  \[
  \val_{2,1}(A_3) \le \min\{ \val_{1,1}(A_3), \val_{2,2}(A_3) \} \le \min\{\val_{1,1}(A_2), \val_{2,2}(A_2) \} \le \val_{1,2}(A_2).
  \]
  Now
  \[
    \begin{split}
      \val_{1,2}(A_3) &\ge \min\{ \val_{1,2}(A_2), \val_{1,1}(A_2) + n, \val_{2,2}(A_2) + n\}\\
      & \ge \min\{ \val_{1,2}(A_2), \val_{2,1}(A_2)+n \} \\
      & \ge \min\{ \val_{1,1}(A_3), \val_{2,2}(A_3) \}
    \end{split}
  \]
  and thus $A_3$ still satisfies the first two inequalities.

  If $\val_{1,2}(A_3) \le \min\{ \val_{1,1}(A_3), \val_{2,2}(A_3) \} + n$ we are done.
  Otherwise, suppose $\val_{1,1}(A_3) = \min\{ \val_{1,1}(A_3), \val_{2,2}(A_3) \}$ and $\val_{1,1}(A_3) + n  < \val_{1,2}(A_3)$. The other case is handled analogously.
  We then add $\pi^n$ times the first column to the second column to obtain a matrix $A_4$, which now satisfies all inequalities.
\end{proof}

Before considering sets of lengths, we need one final result about the associativity of atoms.

\begin{lemma} \label{l-intersection}
  Let $U$, $V \in R^\bullet$ be atoms that are not right associated.
  Then $UR \cap VR \subset J(R)$.
\end{lemma}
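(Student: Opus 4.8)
The plan is to use the two ring homomorphisms $\varphi_1,\varphi_2\colon R\to D/\pi D$ sending a matrix to the residue class of its $(1,1)$-entry, respectively its $(2,2)$-entry; these are ring homomorphisms because $n\ge 1$. Their kernels are $\mathfrak m_1=\begin{bmatrix}\pi D&\pi^n D\\ D&D\end{bmatrix}$ and $\mathfrak m_2=\begin{bmatrix}D&\pi^n D\\ D&\pi D\end{bmatrix}$, and from the description of $J(R)$ above one has $\mathfrak m_1\cap\mathfrak m_2=J(R)$. So it suffices to show that $\varphi_1(z)=\varphi_2(z)=0$ for every $z\in UR\cap VR$, which I would prove one index $i\in\{1,2\}$ at a time.

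Fix $i$ and let $z\in UR\cap VR$. If $\varphi_i(U)=0$, then writing $z=Um$ with $m\in R$ gives $\varphi_i(z)=\varphi_i(U)\varphi_i(m)=0$; likewise if $\varphi_i(V)=0$, using $z\in VR$. So the only case that needs work is $\varphi_i(U)\ne 0$ and $\varphi_i(V)\ne 0$, that is, the $(i,i)$-entry of each of $U$, $V$ has valuation $0$. Since an atom of $R^\bullet$ is not a unit, \cref{l-units-1} shows the other diagonal entry of $U$ has positive valuation; as its $(i,i)$-entry has valuation $0$, \cref{t-atom} then forces $U$ to be of type~\labelcref{t-atom:i} (type~\labelcref{t-atom:ii} would require \emph{both} diagonal entries to have positive valuation), with its off-$(i,i)$ diagonal entry of valuation exactly $1$, and likewise for $V$. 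By \cref{c-atomr}, this means: when $i=2$, $U$ and $V$ are right associated to $\begin{bmatrix}\pi&\lambda\pi^n\\ 0&1\end{bmatrix}$ and $\begin{bmatrix}\pi&\mu\pi^n\\ 0&1\end{bmatrix}$ of class~\labelcref*{c-atomr:1}; when $i=1$, to $\begin{bmatrix}1&0\\ \lambda&\pi\end{bmatrix}$ and $\begin{bmatrix}1&0\\ \mu&\pi\end{bmatrix}$ of class~\labelcref*{c-atomr:2}; here $\lambda,\mu\in\cR(1)$, and $\lambda\ne\mu$ since $U$ and $V$ are not right associated.

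It then remains a short computation with matrices. For $i=2$ one checks directly that
\[
  \begin{bmatrix}\pi&\lambda\pi^n\\ 0&1\end{bmatrix}R=\left\{\,\begin{bmatrix}a&b\pi^n\\ c&d\end{bmatrix}\in R:\ \sv(a)\ge 1\ \text{and}\ b\equiv\lambda d\pmod\pi\,\right\},
\]
and intersecting with the analogous description for $\mu$, while using that $\lambda-\mu\in D^\times$, gives
\[
  UR\cap VR=\begin{bmatrix}\pi D&\pi^{n+1}D\\ D&\pi D\end{bmatrix}\subset J(R),
\]
so in particular $\varphi_2(z)=0$ (in fact $\varphi_1(z)=0$ as well). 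The case $i=1$ is entirely analogous, with $\begin{bmatrix}1&0\\ \lambda&\pi\end{bmatrix}R=\{\begin{bmatrix}a&b\pi^n\\ c&d\end{bmatrix}\in R:\sv(d)\ge 1,\ c\equiv\lambda a\pmod\pi\}$ leading to $UR\cap VR=\begin{bmatrix}\pi D&\pi^n D\\ \pi D&\pi D\end{bmatrix}\subset J(R)$, or can be obtained from the case $i=2$ by applying the standard involution $\adj$. Taking both indices together yields $z\in\mathfrak m_1\cap\mathfrak m_2=J(R)$, as desired.

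The part I expect to require the most care is the bookkeeping in the second paragraph: verifying that $\varphi_i(U)\ne 0$ really does pin $U$ down---via \cref{t-atom} and \cref{c-atomr}---to exactly one of the classes~\labelcref*{c-atomr:1} or~\labelcref*{c-atomr:2}, with $U$ and $V$ necessarily landing in the \emph{same} class, and keeping the $i=1$ versus $i=2$ symmetry straight. The matrix identities themselves are routine verifications.
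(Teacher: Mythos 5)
Your proof is correct and rests on the same core ingredients as the paper's: reducing via \cref{c-atomr} to the representatives $\begin{bsmallmatrix}\pi&\lambda\pi^n\\0&1\end{bsmallmatrix}$ and $\begin{bsmallmatrix}1&0\\\lambda&\pi\end{bsmallmatrix}$, computing their right ideals explicitly, and using $\lambda-\mu\in D^\times$ to intersect the congruence conditions. The only difference is organizational: you package the case analysis through the two residue homomorphisms $\varphi_1,\varphi_2$ (which uniformly absorbs both the type~\labelcref{t-atom:ii} case and the mixed class~\labelcref*{c-atomr:1}-versus-\labelcref*{c-atomr:2} case), whereas the paper handles those cases by directly observing $U\in J(R)$ or $W_1(\lambda)R\cap W_2(\mu)R\subset J(R)$.
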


\begin{proof}
  If $U$ (respectively $V$) is an atom of type \labelcref{t-atom:ii}, then $U \in J(R)$ (respectively $V \in J(R)$) and we are done.
  We may now assume that $U$ and $V$ are atoms of type \labelcref{t-atom:i}.

  If two elements $U$, $U' \in R^\bullet$ are right associated, then $UR = U'R$, so it suffices to consider $U$ and $V$ of the form listed in \labelcref{c-atomr:1,c-atomr:2} of \cref{c-atomr}.

  For $\lambda \in D$, let
  \[
    W_1(\lambda) \coloneqq\begin{bmatrix} \pi & \lambda \pi^n \\ 0 & 1 \end{bmatrix}
    \quad\text{and}\quad
    W_2(\lambda) \coloneqq\begin{bmatrix} 1 & 0  \\ \lambda & \pi \end{bmatrix}.
  \]
  We have
  \[
    \begin{bmatrix} \pi & \lambda \pi^n \\ 0 & 1 \end{bmatrix}
    \begin{bmatrix} a & b\pi^n \\ c & d \end{bmatrix} =
    \begin{bmatrix}
      a \pi + c \lambda \pi^n & (b\pi + d \lambda)\pi^n \\
      c & d
    \end{bmatrix} \qquad(\text{for $a$, $b$, $c$, $d \in D$}),
  \]
  and conclude
  \[
    W_1(\lambda) R = \left\{\, \begin{bmatrix} a\pi  & b \pi^n \\ c & d \end{bmatrix} :  a, b, c, d \in D \text{ with } b \equiv d \lambda \mod \pi D \,\right\}.
  \]
  Similarly,
  \[
    W_2(\lambda) R = \left\{\, \begin{bmatrix} a  & b \pi^n \\ c & d\pi \end{bmatrix} :  a, b, c, d \in D \text{ with } c \equiv a \lambda \mod \pi D \,\right\}.
  \]
  Clearly $W_1(\lambda)R \cap W_2(\mu)R \subset J(R)$ for all $\lambda$,~$\mu \in D$.
  Due to the congruence condition, $W_1(\lambda) R \cap W_1(\mu)R \subset J(R)$ as well as $W_2(\lambda) R \cap W_2(\mu) R \subset J(R)$ for $\lambda$,~$\mu \in D$ with $\lambda \not\equiv \mu \mod \pi D$.
\end{proof}

\begin{theorem} \label{t-lengths}
  Let $D$ be a DVR with quotient field $K$, $n \in \bN_{\ge 2}$, and
  \[
    R = \begin{bmatrix} D & \pi^n D \\ D & D \end{bmatrix} \subset M_2(D)
  \]
  an Eichler order of level $n$ in $M_2(K)$. Let
  \[
  A = \begin{bmatrix} a & b\pi^n \\ c & d \end{bmatrix} \in R^\bullet.
  \]
  \begin{enumerate}
  \item\label{t-lengths:det} If $\val(a) = 0$ or $\val(d) = 0$ \textup{(}that is, $A \not\in J(R)$\textup{)}, then $\sL(A) = \{ \val(\det(A)) \}$ and $\card{\sZ^*(A)}=1$.

  \item\label{t-lengths:short} If $\val(a) > 0$ and $\val(d) > 0$ \textup{(}that is, $A \in J(R)$\textup{)}, then $\min \sL(A) \le n+5$.
  \end{enumerate}
\end{theorem}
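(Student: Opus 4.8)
Since the standard involution $\adj$ interchanges $\val_{1,1}$ and $\val_{2,2}$ and induces an isomorphism $R^\bullet \to (R^\bullet)^{\mathrm{op}}$, we have $\sL(A)=\sL(\adj(A))$ and $\card{\sZ^*(A)}=\card{\sZ^*(\adj(A))}$, so I may assume $\val_{1,1}(A)=\val(a)=0$. In any factorization $A=U_1\cdots U_k$ into atoms, \cref{l-add} gives $\sum_i\val_{1,1}(U_i)=\val_{1,1}(A)=0$, hence $\val_{1,1}(U_i)=0$ for all $i$; since the atoms of type \ref{t-atom:ii} are precisely those in $J(R)$ and so have $\val_{1,1}\ge 1$, every $U_i$ is of type \ref{t-atom:i} by \cref{t-atom}, so $\val(\det U_i)=1$ and therefore $k=\val(\det A)$. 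This proves $\sL(A)=\{\val(\det A)\}$. For uniqueness I would induct on $\val(\det A)$: if $A\notin R^\times$, then by \cref{c-atomr} the leftmost atom $U_1$ of a factorization, having $\val_{1,1}(U_1)=0$, is right associated to $\begin{bmatrix}1&0\\\lambda&\pi\end{bmatrix}$ for some $\lambda\in\cR(1)$, and \cref{l-intersection} shows that, since $A\notin J(R)$, there is at most one such $\lambda$; thus the class of $U_1$ in $\sZ^*(A)$ is forced. The complementary factor $\begin{bmatrix}1&0\\\lambda&\pi\end{bmatrix}^{-1}A\in R^\bullet$ again satisfies $\val_{1,1}=0$ and has $\val(\det)=\val(\det A)-1$, so by induction it has a unique factorization, and hence so does $A$. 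The base case is $A\in R^\times$.

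\textbf{Part (2).} Sets of lengths depend only on the associativity class, so by \cref{l-special} I may assume $A$ is in the normal form $\val(c)\le\mu\le\val(b)+n\le\mu+n\le\val(c)+2n$ with $\mu=\min\{\val(a),\val(d)\}$; as $J(R)$ is a two-sided ideal this keeps $A\in J(R)$, so $\mu\ge 1$, and since $\adj$ preserves both the normal form and membership in $J(R)$ I may also assume $\mu=\val(a)$. If $\val(\det A)\le n+4$ we are done, because every atom $U$ has $\val(\det U)=\val(\nr(U))\ge 1$ by \cref{l-basic}, whence $\min\sL(A)\le\max\sL(A)\le\val(\det A)$. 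Otherwise the plan is to peel a single long atom off the left: I would construct an atom $U$ of the form $\begin{bmatrix}\varepsilon\pi^m & \pi^n \\ 1 & \ast\end{bmatrix}$, i.e.\ one of the classes (3)--(8) of \cref{c-atomr}, that left divides $A$ and satisfies $\val(\det U)\ge\val(\det A)-(n+4)$: the exponents $m$ and $m'$ are dictated by the valuations $\val(a)$, $\val(b)$, $\val(c)$, $\val(d)$ occurring in the normal form, and the units $\varepsilon$, $\delta$ are then chosen to solve the congruences that place all four entries of $V\coloneqq U^{-1}A$ in $D$, with the $(1,2)$-entry in $\pi^n D$ (here one uses that $D^\times$ surjects onto $(D/\pi^j D)^\times$ for every $j\ge 0$). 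Once such a $U$ is in hand, $V\in R^\bullet$ with $\val(\det V)=\val(\det A)-\val(\det U)\le n+4$, so every factorization of $V$ has at most $n+4$ atoms, and $A=UV$ yields $\min\sL(A)\le 1+\min\sL(V)\le n+5$.

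The main obstacle is the construction of the peeling atom $U$. It breaks into subcases according to which of $\val(a)+\val(d)$ and $\val(b)+\val(c)+n$ controls $\val(\det A)$, and according to the boundary behaviour of $\val(b)$ and $\val(c)$ inside the normal-form chain; in some of these one is forced to take $U$ from class (5), (6) or (7) rather than from the generic class (8), and in each case the four divisibility conditions for $U^{-1}A$ and the atomicity of the constructed $U$ must be verified directly. The ``$+4$'' of slack, rather than ``$+1$'', is precisely what absorbs these boundary cases: in the worst ones one peels off two atoms instead of one, or is left with a core whose determinant has valuation up to $n+4$, and ``$n+5$'' is the resulting worst-case count.
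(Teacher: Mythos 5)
Your Part~(1) is correct and follows the paper's own argument: \cref{l-add} forces every atom in a factorization of $A$ to have $\val_{1,1}=0$, hence to be of type \ref{t-atom:i} with determinant of valuation $1$, which pins down the length; and \cref{l-intersection} plus induction gives uniqueness of the rigid factorization. No issues there.

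Part~(2), however, has a genuine gap, and the strategy as described cannot be repaired in the form you state it. Your plan is to reduce $\val(\det A)$ below $n+4$ by peeling off one (or, in bad cases, two) long atoms on the left, the key claim being that there is an atom $U$ left dividing $A$ with $\val(\det U)\ge\val(\det A)-(n+4)$. This claim is false. Take $A=\pi^N I$ with $N>n+4$, so $\val(\det A)=2N$. If $U$ is any atom with $U^{-1}A=\pi^N\det(U)^{-1}\adj(U)\in R$, then looking at the $(2,1)$-entry of $\adj(U)$ forces $\val(\det U)\le N<2N-(n+4)$. More fundamentally, the invariant you are trying to decrease --- the valuation of the determinant --- is the wrong one: by \cref{l-eichler-longatom} and class \labelcref{c-atomr:8} of \cref{c-atomr}, single atoms of type \ref{t-atom:ii} already have determinants of arbitrarily large valuation, so no bounded amount of peeling can drive $\val(\det)$ of the remainder below $n+4$ in general (e.g.\ $A=\pi^N W$ with $W$ an atom of huge determinant valuation has $\min\sL(A)\le 3$ even though every ``core'' you reach still has enormous determinant). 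Your fallback remark about ``peeling two atoms'' does not rescue this, because you never state a two-atom version of the claim that would terminate, and the natural one fails for the same reason. Since the entire bound $n+5$ rests on this unproven and false reduction, Part~(2) is not established.

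What the paper does instead is to control the \emph{off-diagonal} valuations rather than the determinant. After normalizing via \cref{l-special}, one divides out $\pi^m$ with $m=\min\{\val(c),\val(b),\val(a)-1,\val(d)-1\}$; the factor $\pi^m$ contributes at most $2$ to the length because $\pi^m=U\adj(U)$ for a single atom $U$ with $\det U=\pi^m$. The quotient $A'$ then satisfies $\min\{\val(b'),\val(c')\}\le 1$, hence $\val(b')+\val(c')\le n+2$, and one exhibits a factorization of $A'$ of length at most $\val(b')+\val(c')+1\le n+3$ --- either by conjugating into the situation of Part~\ref{t-lengths:det}, or by stripping off $\val(b')+\val(c')$ atoms of type \ref{t-atom:i} so that a single atom of type \ref{t-atom:ii} remains. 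Note that this terminal atom may itself have determinant of very large valuation; that is precisely why the argument must be organized around $\val(b)+\val(c)$ and not around $\val(\det A)$.
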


\begin{proof}
  \ref*{t-lengths:det}
  Suppose $\val(a) = 0$; the case $\val(d)=0$ follows by symmetry.
  Let $A = U_1\cdots U_m$ with $m\ge 1$ and $U_1$, $\ldots\,$,~$U_m \in \cA(R^\bullet)$.
  By \subref{l-add:11} we have $\val_{1,1}(U_i)=0$ for all $i \in [1,m]$.
  Thus each $U_i$ is an atom of type \ref{t-atom:i}, which implies $\val(\det(U_i))=1$.
  Thus $m= \val(\det(U_1)) + \cdots + \val(\det(U_m)) = \val(\det(A))$.

  We show $\card{\sZ^*(A)}=1$ by induction on $m$, the length of the factorization $A=U_1\cdots U_m$.
  For $m=0$ this is trivially true since $A\in R^\times$.
  Suppose now $m > 0$ and that the claim has been established for $m-1$.
  Let
  \[
    A=U_1\cdots U_m = V_1\cdots V_m \quad\text{with}\quad U_1, \ldots\,,~U_m, V_1, \ldots\,,~V_m \in \cA(R^\bullet).
  \]
  Since $A \subset U_1R \cap V_1R$ and $A \not\in J(R)$, \cref{l-intersection} implies that $U_1$ and $V_1$ are right associated.
  Let $E_1 \in R^\times$ be such that $V_1 =  U_1 E_1$.
  Then $U_2 \cdots U_m = (E_1 V_2) V_3 \cdots V_m$.
  By \subref{l-add:11}, we have $\val_{1,1}(U_2\cdots U_m) = 0$.
  The induction hypothesis therefore implies $\rf{U_2,\cdots,U_m} = \rf{E_1V_2,V_3,\cdots,V_m}$.
  Moreover,
  \[
    \rf{U_1,U_2,\cdots,U_m} = \rf{V_1E_1^{-1},E_1V_2,V_3,\cdots,V_m} = \rf{V_1,\cdots,V_m}.
  \]

  \ref*{t-lengths:short}
  By \cref{l-special} we may, without restriction, assume
  \[
  \val(c) \le \min\{ \val(a), \val(d) \} \le \val(b) + n \le \min\{ \val(a), \val(d) \} + n \le \val(c) + 2n.
\]
  In particular, we have $\val(a) \ge \max\{\val(b),\val(c)\}$ and $\val(d) \ge \max\{\val(b),\val(c)\}$.
  Choose $m = \min\{\val(c),\val(b), \val(a)-1, \val(d)-1\}$ and consider
  \[
  A' = \begin{bmatrix} a' & b'\pi^n \\ c' & d' \end{bmatrix} = \pi^{-m} A \in R^\bullet.
  \]

  First note that $\min \sL(\pi^m) \le 2$:
  The case $m=0$ is trivial.
  For $m \ge 1$, \cref{l-eichler-longatom} implies that there exists an atom $U$ with $\det(U)=\pi^m$.
  Then $\adj(U)$ is also an atom and $\pi^m = U \adj(U)$.

  Now we show that $A'$ has a factorization of length at most $n+3$.
  Since $\val(a) \ge \max\{\val(b), \val(c)\}$ and $\val(d) \ge \max\{\val(b),\val(c)\}$, we have that $\val(c') \le 1$ or $\val(b') \le 1$.
  If $\val(b') \le 1$, then $\val(c') \le \val(b') + n = n+1$.
  If $\val(c') \le 1$, then $\val(b') \le \val(c') + n = n+1$.
  Thus, in either case $\val(b') + \val(c') + 1 \le n+3$ and it suffices to show that $A'$ has a factorization of length at most $\val(b') + \val(c') + 1$.

  Our choice of $m$ also ensures that $\val(a') \ge 1$ and $\val(d') \ge 1$.
  Since $\val(b') \le 1$ or $\val(c') \le 1$ and both of these two values are bounded by $\min\{\val(a'), \val(d')\}$, we have $\val(b') + \val(c') \le \min\{\val(a'), \val(d')\} + 1 \le \val(a') + \val(d')$.

  Suppose first that $\val(b') + \val(c') \ge \val(a') + \val(d') - 1$ and set $l = \min\{\val(a'),\val(b')\}$.
  Since $\val(a') \le \val(b') + \val(c')$, we have $\val(a') - l \le \val(c')$, and hence
  \[
  A' =
  \begin{bmatrix} \pi^{l} & 0 \\ 0 & 1 \end{bmatrix}
  \underbrace{\begin{bmatrix} a' \pi^{-\val(a')} & b' \pi^{-l}\pi^n \\ c' \pi^{-\val(a') + l} & d' \end{bmatrix}}_B
  \begin{bmatrix} \pi^{\val(a')-l} & 0 \\ 0 & 1 \end{bmatrix}.
  \]
  Now $\sL(B) = \{ \val(d') \}$ by \ref*{t-lengths:det}.
  Thus,  $A'$ has a factorization of length at most $\val(a') + \val(d') \le \val(b') + \val(c') + 1$.

  If $\val(b') + \val(c') \le \val(a') + \val(d') - 2$, we can factor out $\val(b') + \val(c')$ atoms of type \ref*{t-atom:i}, in a way that ensures that the upper-left and lower-right corners of the remaining matrix still have positive valuations, and also so that the upper-right and lower-left corners have valuation $0$.
  What remains is therefore an atom of type \ref*{t-atom:ii}.
  This again gives a factorization of $A'$ of length at most $\val(b') + \val(c') + 1$.
\end{proof}

\begin{lemma} \label{l-mindelta}
  $\min\Delta(R^\bullet) = 1.$
\end{lemma}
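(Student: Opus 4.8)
The plan is to exhibit a single element $A \in R^\bullet$ whose set of lengths contains two consecutive integers; since $\Delta(R^\bullet) \subseteq \bN$, this immediately gives $1 \in \Delta(R^\bullet)$ and hence $\min\Delta(R^\bullet) = 1$ (and in particular $\Delta(R^\bullet) \ne \emptyset$). Concretely I will arrange $\{2,3\} \subseteq \sL(A)$.

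First I would fix a type \ref{t-atom:ii} atom $U$ with $\val(\det U) = 2$ and $\val_{2,2}(U) = 1$, namely
\[
  U = \begin{bmatrix}\pi & \pi^n \\ 1 & \pi\end{bmatrix} \text{ if } n \ge 3, \qquad U = \begin{bmatrix}\pi^2 & \pi^2 \\ 1 & \pi\end{bmatrix} \text{ if } n = 2 .
\]
\cref{t-atom} certifies that $U$ is an atom, and $\det U = \pi^{\val_{1,1}(U) + 1} - \pi^n$ has valuation exactly $2$ in both cases (for $n \ge 3$ because $\val_{1,1}(U) + 1 = 2 < n$; for $n = 2$ because $\pi - 1 \in D^\times$). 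Setting
\[
  A = U \begin{bmatrix} \pi & 0 \\ 0 & 1 \end{bmatrix}
\]
exhibits $A$ as a product of two atoms, so $2 \in \sL(A)$; one also checks directly that $A \in J(R)$.

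Next I would produce a factorization of $A$ into three atoms. Write $U = \left(\begin{smallmatrix} a & b\pi^n \\ c & d \end{smallmatrix}\right)$, so that $\val(d) = \val_{2,2}(U) = 1$. Then
\[
  A = \begin{bmatrix} 1 & 0 \\ 0 & \pi \end{bmatrix} X \quad\text{with}\quad X = \begin{bmatrix} a\pi & b\pi^n \\ c & d\pi^{-1} \end{bmatrix} \in R^\bullet ,
\]
and $\val_{2,2}(X) = \val(d) - 1 = 0$, so $X \notin J(R)$. Hence \subref{t-lengths:det} applies to $X$ and gives $\sL(X) = \{\val(\det X)\} = \{\val(\det U)\} = \{2\}$. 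Writing $X$ as a product of two atoms and using the atom $\left(\begin{smallmatrix} 1 & 0 \\ 0 & \pi \end{smallmatrix}\right)$ in the displayed identity, we obtain a factorization of $A$ into $1 + 2 = 3$ atoms, so $3 \in \sL(A)$. Thus $\{2,3\} \subseteq \sL(A)$, which forces $1 \in \Delta(A) \subseteq \Delta(R^\bullet)$, and the lemma follows.

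There is no substantive obstacle — the argument is a direct construction — but the choice of $U$ deserves a word. Requiring $\val(\det U) = 2$ is exactly what makes the lengths of the two factorizations of $A$ differ by precisely one, while requiring $\val_{2,2}(U) = 1$ is exactly what pushes the cofactor $X$ out of $J(R)$, so that \subref{t-lengths:det} pins down $\sL(X)$. Arranging both simultaneously is what makes the split $n = 2$ versus $n \ge 3$ convenient: when $n = 2$ and $D/\pi D$ has two elements, $\val_{1,1}(U) = \val_{2,2}(U) = 1$ would force $\val(\det U) > 2$, so there one takes $\val_{1,1}(U) = 2$ instead.
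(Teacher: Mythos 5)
Your argument is correct and is essentially the paper's own proof: the paper likewise exhibits a single element with sets of lengths containing $\{2,3\}$ by writing it once as a product of two atoms (certified by \cref{t-atom}) and once as an atom times an element outside $J(R)$ whose unique factorization length is pinned down by \subref{t-lengths:det}. The only difference is the specific matrices chosen (the paper uses $A=\begin{bsmallmatrix}\pi & \pi^n\\ \pi & \pi^2+\pi^n\end{bsmallmatrix}$ uniformly for all $n\ge 2$, avoiding your case split), which is immaterial.
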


\begin{proof}
  With
  \[
  A = \begin{bmatrix} \pi & \pi^n \\ \pi & \pi^2 + \pi^n \end{bmatrix}
  \]
  we have
  \[
  A = \begin{bmatrix} 1 & 0 \\ 0 & \pi \end{bmatrix} \begin{bmatrix} \pi & \pi^n \\ 1 & \pi + \pi^{n-1} \end{bmatrix},
  \]
  where both factors are atoms.
  Also,
  \[
  A = \begin{bmatrix} 1 & \pi^n \\ 1 & \pi^2 + \pi^n \end{bmatrix} \begin{bmatrix} \pi & 0 \\ 0 & 1 \end{bmatrix},
  \]
  where the second factor is an atom, and the first factor has the unique factorization length $2$ by \subref{t-lengths:det}.
  Thus $\sL(A)=\{2,3\}$ and so $1 \in \Delta(A) \subset \Delta(R^\bullet)$.
\end{proof}

From the last two results, it is easy to deduce the factorization-theoretic properties for Eichler orders in $M_2(K)$ that we claim in the introduction.
We do so in \cref{s-wrapup} in a more general setting.

\section{Non-Eichler orders in \texorpdfstring{$M_2(K)$}{M\texttwoinferior(K)}}
\label{s-noneichler}

In this section we consider the case where $D$ is a DVR, $K$ is its quotient field, and $R$ is a non-Eichler $D$-order in $M_2(K)$.
The main work lies in showing that $J(R)\setminus J(R)^2$ contains an element $z$ with $\nr(z)=0$ (see \cref{t-noneichler}); from this result the factorization theoretic properties then follow easily.
To show the existence of such an element $z$, we make use of the fact that every such order $R$ arises as the even Clifford algebra $C_0(M,q)$ of a ternary quadratic module $(M,q)$ over $D$.

A good reference for this result is Chapter 22 of Voight's book \cite{Voight18}.
A correspondence between primitive ternary quadratic forms and Gorenstein orders was established by Brzeziński in \cite{Brzezinski82}.
The specific correspondence used here appears in \cite{Gross-Lucianovic09} and \cite{Voight11}.

Let $D$ be a commutative ring and let $M$ be a $D$-module.
A \emph{quadratic form} is a map $q\colon M \to D$ satisfying the following:
\begin{enumerate}
\item [(i)] $q(dx) = d^2q(x)$ for all $m \in M$, $d \in D$.
\item [(ii)] The map $B \colon M \times M \to D$ defined by
  \[
    B(x,y) = q(x+y) - q(x) - q(y)
  \]
  is symmetric and $D$-bilinear.
\end{enumerate}
The pair $(M,q)$ is referred to as a \emph{quadratic module} (over $D$).

Let $M$ be free of rank $3$ with basis $e_1$, $e_2$, $e_3$, and equipped with the quadratic form
\[
q(xe_1 + ye_2 + ze_3) = ax^2 + by^2 + cz^2 + uyz + vxz + wxy \quad\text{for $x$, $y$, $z \in D$}
\]
where $a$, $b$, $c$, $u$, $v$, $w \in D$.
We refer to $(M,q)$ as a \emph{ternary quadratic module}.
Its \emph{\textup{(}half-\textup{)}discriminant} is
\[
  d'(q) = 4abc + uvw - au^2 - bv^2 -cw^2.
\]
The quadratic module $(M,q)$ is \emph{nondegenerate} if $d'(q) \ne 0$.

The \emph{Clifford algebra} $C(M,q)$ of $(M,q)$ is the quotient of the tensor algebra $T(M)$ by the ideal generated by $x \otimes x - q(x)$ for $x\in M$.
It has a basis consisting of elements $e_{i_1}\cdots e_{i_r}$ with $r \in [0,3]$ and $1 \le i_1 < \cdots < i_r \le 3$.
The Clifford algebra has a $\bZ/2 \bZ$-grading and the basis elements $e_{i_1} \cdots e_{i_r}$ have degrees corresponding to the parity of their length.
The set of elements with even grading form a subalgebra $C_0(M,q)$, the \emph{even Clifford algebra} of $(M,q)$.

The algebra $C_0(M,q)$ is an (associative, unital) $D$-algebra with basis
\[
1,\quad {\quat i}=e_2e_3,\quad {\quat j}=e_3e_1,\quad {\quat k}=e_1e_2
\]
and relations
\begin{equation} \label[pluralequation]{e-relc0}
\begin{aligned}
\quat i^2 &= u\quat i - bc,   & \quat j\quat k &= a\overline{\quat i} = a(u-\quat i), & \quat k\quat j &= -vw + a\quat i + w\quat j + v\quat k,\\
\quat j^2 &= v\quat j - ac,   & \quat k\quat i &= b\overline{\quat j} = b(v-\quat j), & \quat i\quat k &= -uw + w\quat i + b\quat j + u\quat k,\\
\quat k^2 &= w\quat k - ab,   & \quat i\quat j &= c\overline{\quat k} = c(w-\quat k), & \quat j\quat i &= -uv + v\quat i + u\quat j + c\quat k.
\end{aligned}
\end{equation}
This algebra has a standard involution given by $\overline{\quat i} = u - \quat i$, $\overline{\quat j} = v - \quat j$, and $\overline{\quat k} = w - \quat k$.
The reduced norm is $\nr(x)=x\overline x=\overline{x} x$ and the reduced trace is $\tr(x) = x + \overline x$ for $x\in C_0(M,q)$.
The reduced norm is a quadratic form on $C_0(M,q)$.
Its associated bilinear form is $B(x,y) = \nr(x+y) - \nr(x) - \nr(y) = \tr(x\overline{y}) = \tr(y\overline{x})$ for $x$,~$y \in C_0(M,q)$.
In particular, the reduced norm and trace are given by
\[
\begin{split}
\nr(x_0 + x_1 \quat i + x_2 \quat j+ x_3 \quat k) &= x_0^2 + bc x_1^2 + ac x_2^2 + ab x_3^2
                               + u x_0x_1 + v x_0x_2 + w x_0x_3 \\
                               &+ (uv-cw)x_1x_2 + (uw-bv)x_1x_3 + (vw-au)x_2x_3.
\end{split}
\]
and
\[
\tr(x_0 + x_1 \quat i + x_2 \quat j + x_3 \quat k) = 2x_0 + ux_1 + vx_2 + wx_3.
\]

We also note that for any $x\in C_0(M,q)$, \begin{equation}\label{e-charpoly} x^2-\tr(x)x+\nr(x)=0.\end{equation}

We now gather several results in this setting, many of which are well-known at least in some restricted settings.

\begin{lemma} \label{l-nilpotent}
  Let $(M,q)$ be a ternary quadratic module over a commutative domain $D$.
  For $x \in C_0(M,q)$ the following statements are equivalent.
  \begin{equivenumerate}
    \item\label{l-nilpotent:nrtr} $\nr(x)=\tr(x)=0$.
    \item\label{l-nilpotent:2} $x^2 = 0$.
    \item\label{l-nilpotent:n} $x$ is nilpotent.
  \end{equivenumerate}
\end{lemma}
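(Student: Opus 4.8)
The plan is to establish the equivalences cyclically, \ref{l-nilpotent:nrtr} $\Rightarrow$ \ref{l-nilpotent:2} $\Rightarrow$ \ref{l-nilpotent:n} $\Rightarrow$ \ref{l-nilpotent:nrtr}, using as the sole substantive ingredient the quadratic identity \eqref{e-charpoly}, which for $x \in C_0(M,q)$ reads $x^2 = \tr(x)\,x - \nr(x)$, together with the observation that $C_0(M,q)$ is free as a $D$-module (with basis $1$, $\quat i$, $\quat j$, $\quat k$) and hence torsion-free, since $D$ is a domain.

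The first two implications are essentially formal. If $\nr(x) = \tr(x) = 0$, substituting into \eqref{e-charpoly} immediately gives $x^2 = 0$, which is \ref{l-nilpotent:2}; and $x^2 = 0$ certainly makes $x$ nilpotent, which is \ref{l-nilpotent:n}.

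For the remaining implication \ref{l-nilpotent:n} $\Rightarrow$ \ref{l-nilpotent:nrtr}, I would argue as follows. Write $t = \tr(x)$ and $s = \nr(x)$, both elements of $D$, so that \eqref{e-charpoly} becomes $x^2 = tx - s$. The case $x = 0$ is trivial, so assume $x \neq 0$ and let $n \ge 2$ be minimal with $x^n = 0$; then $x^{n-1} \neq 0$. Multiplying $x^2 = tx - s$ by $x^{n-2}$ and using $x^n = 0$ gives $t x^{n-1} = s x^{n-2}$; multiplying this identity once more by $x$ gives $s x^{n-1} = t x^n = 0$, and since $C_0(M,q)$ is torsion-free over $D$ while $x^{n-1} \neq 0$, this forces $s = \nr(x) = 0$. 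The relation then collapses to $x^2 = tx$, from which an immediate induction yields $x^k = t^{k-1} x$ for all $k \ge 1$; taking $k = n$ gives $t^{n-1} x = 0$, and torsion-freeness together with $x \neq 0$ forces $t^{n-1} = 0$, hence $t = \tr(x) = 0$ because $D$ is a domain.

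I do not anticipate a genuine obstacle here: the whole argument is elementary once \eqref{e-charpoly} is in hand. The only step demanding a moment of care is the extraction of $\nr(x) = 0$, and subsequently $\tr(x) = 0$, from nilpotency, where one must combine the explicit degree-two relation with the torsion-freeness of $C_0(M,q)$ over $D$. It is worth noting that this works over an arbitrary commutative domain, with no recourse to nondegeneracy of $q$ or to base change along $D \hookrightarrow \operatorname{Frac}(D)$.
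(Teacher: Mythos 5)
Your proof is correct and follows essentially the same route as the paper: the first two implications are identical, and the third rests on the quadratic relation \eqref{e-charpoly} together with torsion-freeness of $C_0(M,q)$ over the domain $D$. The only (harmless) variation is that you extract $\nr(x)=0$ from the minimal nilpotency index and the degree-two relation, whereas the paper gets it slightly more quickly from $x\overline{x}=\nr(x)$ and the fact that a nilpotent $x$ is a zero-divisor; from there both arguments conclude $\tr(x)=0$ via $x^k=\tr(x)^{k-1}x$ in the same way.
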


\begin{proof}
  \ref*{l-nilpotent:nrtr}${}\Rightarrow{}$\ref*{l-nilpotent:2}:
  By \cref{e-charpoly}, if $\tr(x)=\nr(x)=0$, then $x^2=0$.

  \ref*{l-nilpotent:2}${}\Rightarrow{}$\ref*{l-nilpotent:n}: Trivial.

  \ref*{l-nilpotent:n}${}\Rightarrow{}$\ref*{l-nilpotent:nrtr}:
  Since $x$ is a zero-divisor in $C_0(M,q)$ and $C_0(M,q)$ is a torsion-free $D$-module, we must have $x\overline{x}=\overline x x=\nr(x)=0 \in D$.
It follows that $x^2 = \tr(x) x$ and hence $x^n = \tr(x)^{n-1} x$ for all $n \ge 1$.
  Since $x$ is nilpotent, there exists an $n \ge 1$ such that $\tr(x)^{n-1}x=0$.
  Since $C_0(M,q)$ is a torsion-free $D$-module, this implies $\tr(x) = 0$ or $x=0$, which also implies $\tr(x) = 0$.
\end{proof}

If $(M,q)$ is a quadratic module and $B$ is the bilinear form associated to $q$, then $M^\perp = \{\, x \in M : B(x,y) = 0 \,\}$, and the \emph{quadratic radical} of $(M,q)$ is
\[
  \rad M = \rad\, (M,q) = \{\, x \in M^\perp : \nr(x) = 0 \,\}.
\]
If $2 \in D^\bullet$, then $2\nr(x)=B(x,x)$ implies $\rad M = M^\perp$.

\begin{lemma} \label{l-radc0}
  Let $(V,q)$ be a ternary quadratic module over a field $K$.
  For $A=C_0(V,q)$ we have
  \[
  J(A) = \rad\, (A,\nr).
  \]
  If $\chr K \ne 2$, then $J(A) = A^\perp$.
\end{lemma}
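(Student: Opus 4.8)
The plan is to use that $A=C_0(V,q)$ is a $4$-dimensional, hence Artinian, $K$-algebra, so that $J(A)$ contains every nil ideal, and to show that both $J(A)$ and $\rad(A,\nr)$ coincide with the set of nilpotent elements of $A$. Since the field $K$ is in particular a commutative domain, \cref{l-nilpotent} applies, so an element $x\in A$ is nilpotent if and only if $\nr(x)=\tr(x)=0$.

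First I would rewrite the orthogonal complement. As conjugation is a bijection of $A$ and $B(x,y)=\tr(x\overline y)$, we have $A^\perp = \{\, x\in A : \tr(xz)=0 \text{ for all } z\in A \,\}$; taking $z=1$ shows every $x\in A^\perp$ satisfies $\tr(x)=0$, and if moreover $\nr(x)=0$ then \cref{e-charpoly} gives $x^2=0$. Thus $\rad(A,\nr)$ consists of nilpotent elements. Next I would check that $\rad(A,\nr)$ is a two-sided ideal: multiplicativity of the reduced norm yields $\nr(ax)=\nr(a)\nr(x)=0=\nr(xa)$, while the cyclicity $\tr(\alpha\beta)=\tr(\beta\alpha)$ (a consequence of $\overline{x}=\tr(x)-x$ together with $\tr(1)=2$) gives $\tr((ax)z)=\tr(x(za))=0$ and $\tr((xa)z)=\tr(x(az))=0$ for $x\in A^\perp$ and $a,z\in A$. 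Hence $\rad(A,\nr)$ is a nil two-sided ideal of the Artinian ring $A$, so $\rad(A,\nr)\subseteq J(A)$.

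For the reverse inclusion, let $x\in J(A)$. Then $x$ is nilpotent, so $\nr(x)=\tr(x)=0$ by \cref{l-nilpotent}; and for any $z\in A$ the element $xz$ again lies in $J(A)$ and is therefore nilpotent, so $\tr(xz)=0$. This shows $x\in A^\perp$, and together with $\nr(x)=0$ we conclude $x\in\rad(A,\nr)$. Hence $J(A)=\rad(A,\nr)$. Finally, if $\chr K\ne 2$ then $2\in K^\times$, and the identity $B(x,x)=2\nr(x)$ recorded before the lemma gives $A^\perp\subseteq\rad(A,\nr)$, whence $A^\perp=\rad(A,\nr)=J(A)$.

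The argument is essentially routine; the one step requiring care is verifying that $\rad(A,\nr)$ --- concretely, the orthogonality condition defining $A^\perp$ --- is preserved under left and right multiplication, where one cannot directly absorb the multiplier into the argument of $\tr$ but must instead pass through the cyclicity of the trace. It is also worth noting explicitly that, although \cref{l-nilpotent} is stated over a commutative domain, it applies here since $K$ is a field.
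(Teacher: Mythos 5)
Your proof is correct and follows essentially the same route as the paper: both directions rest on $J(A)$ being nilpotent (Artinian), on \cref{l-nilpotent}, and on the identity $B(x,y)=\tr(x\overline y)$. The only cosmetic difference is that for $\rad(A,\nr)\subseteq J(A)$ the paper shows $xA$ is a nil right ideal for each fixed $x$ (which avoids having to check additive closure), whereas you show $\rad(A,\nr)$ is itself a nil two-sided ideal via cyclicity of the trace — for that you should also note the (easy) additive closure, $\nr(x+y)=\nr(x)+\nr(y)+B(x,y)=0$ for $x,y\in\rad(A,\nr)$.
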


\begin{proof}
  Since $A$ is a $4$-dimensional $K$-algebra, it is Artinian and hence the Jacobson radical $J(A)$ is nilpotent.

  Let $x \in J(A)$.
  Then $x\overline y \in J(A)$ is nilpotent for all $y \in A$, and hence $B(x,y)=\tr(x\overline y)=0$.
  Thus $x \in A^\perp$.
  Moreover, since $x$ is nilpotent, $\nr(x) = 0$.

  Let $x \in A^\perp$ with $\nr(x) = 0$.
  We show that $xA$ is a nil right ideal, that is, that $xy$ is nilpotent for all $y \in A$.
  Let $y \in A$.
  Then $\nr(xy) = 0$ and $\tr(xy) = \tr(x \overline{\overline{y}}) = B(x,\overline y) = 0$.
  We conclude that $xy$ is nilpotent.

  If $\chr K \ne 2$, then $A^\perp = \rad\, (A,\nr)$.
\end{proof}

If $(V_1,q_1)$ and $(V_2,q_2)$ are quadratic modules, then $(V_1 \perp V_2)^\perp = V_1^\perp \perp V_2^\perp$.
Using this fact along with the following basic result (\cref{l-radform}), $V^\perp$ of a quadratic module over a field $K$ can easily be computed from an orthogonal decomposition of $V$.
We use this in \cref{p-radres}.

\begin{lemma} \label{l-radform}
  Let $(V,q)$ be a quadratic module over a field $K$.
  \begin{enumerate}
  \item
    Let $V=Ke_1$ be $1$-dimensional and $q(xe_1) = ax^2$ for some $a \in K$.
    Then
    \[
    V^\perp =
    \begin{cases}
      \mathbf 0 &\text{if $a \ne 0$ and $\chr K \ne 2$,} \\
      V         &\text{if $a = 0 $ or $\chr K = 2$.}
    \end{cases}
    \]
  \item If $\chr K = 2$, $V=Ke_1\oplus Ke_2$, and $q(xe_1 + ye_2) = ax^2 + u xy + by^2$ with $a$, $u$, $b \in K$, then $V^\perp = \mathbf 0$ if $u \in K^\times$ and $V^\perp = V$ if $u=0$.
  \end{enumerate}
\end{lemma}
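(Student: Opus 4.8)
The plan is to compute the associated bilinear form $B(x,y) = q(x+y) - q(x) - q(y)$ explicitly in the given coordinates, and then in each case to determine $V^\perp$ by testing the pairing against the basis vectors; the stated formulas for $V^\perp$ then fall out immediately. There is no genuine obstacle here, since both parts reduce to short direct computations; the only point that needs a little care is the bookkeeping in characteristic $2$, where every ``diagonal'' contribution (coming from the $ax^2$, $by^2$ type terms) drops out.

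For part (1), I would write general elements as $xe_1$ and $ye_1$ and expand, obtaining $B(xe_1, ye_1) = a(x+y)^2 - ax^2 - ay^2 = 2axy$. If $\chr K \ne 2$ and $a \ne 0$, then $2a \in K^\times$, so from $B(xe_1, e_1) = 2ax = 0$ we get $x = 0$, and hence $V^\perp = \mathbf 0$. If $a = 0$, or if $\chr K = 2$ (so that $2a = 0$), then $B$ vanishes identically and $V^\perp = V$.

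For part (2) we are in characteristic $2$. Expanding $q$ on $x_1 e_1 + y_1 e_2$ and $x_2 e_1 + y_2 e_2$, the square terms contribute $2a x_1 x_2 = 0$ and $2b y_1 y_2 = 0$, while the mixed term contributes $u(x_1 y_2 + x_2 y_1)$, so that $B(x_1 e_1 + y_1 e_2,\, x_2 e_1 + y_2 e_2) = u(x_1 y_2 + x_2 y_1)$. If $u = 0$, then $B \equiv 0$ and $V^\perp = V$. If $u \in K^\times$, then pairing $x_1 e_1 + y_1 e_2$ against $e_2$ gives $u x_1 = 0$ and pairing it against $e_1$ gives $u y_1 = 0$, forcing $x_1 = y_1 = 0$; hence $V^\perp = \mathbf 0$. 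Since these exhaust all the cases in the statement, this completes the plan.
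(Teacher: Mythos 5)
Your computation is correct: the paper states this lemma without proof (treating it as a basic fact), and your direct expansion of $B(x,y)=q(x+y)-q(x)-q(y)$ followed by testing against the basis vectors is exactly the routine verification intended. Both the vanishing of the diagonal terms in characteristic $2$ and the case analysis on $a$ and $u$ are handled correctly.
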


\begin{proposition} \label{p-radres}
  Let $(V,q)$ be a ternary quadratic module over a field $K$, and let $e_1$, $e_2$,~$e_3$ be a basis of $V$.
  Let $A = C_0(V,q)$.

  \begin{enumerate}
    \item\label{p-radres:abc} Suppose $q(xe_1+ye_2+ze_3)=ax^2 + by^2 + cz^2$ with $a$, $b$,~$c \in K^\times$.
      \begin{enumerate}
        \item If $\chr K \ne 2$, then $q$ is nondegenerate, $J(A)=J(A)^2 = \mathbf 0$, and $A=A/J(A)$ is a quaternion algebra over $K$.
        \item Suppose $\chr K = 2$.
          Then $q$ is degenerate.
          \begin{enumerate}
          \item\label{p-radres:abc:1}
            If $bc=y_0^2$ and  $ac=z_0^2$ with $y_0$,~$z_0 \in  K^\times$, then
            \begin{align*}
              J(A)\phantom{^1}  &=\langle y_0+\quat i,\ z_0+\quat j\rangle_A = \langle y_0 + \quat i,\ z_0 + \quat j,\ y_0z_0 + c\quat k \rangle_K,\\
              J(A)^2 &= \langle y_0z_0 + z_0 \quat i + y_0 \quat j + c \quat k \rangle_K, \quad\text{and}\\
              J(A)^3 &= \mathbf 0.
            \end{align*}
          \item\label{p-radres:abc:2} If $ac$, $bc$ are not both squares in $K$ and there exist $y_0$, $y_1$,~$y_2 \in K$ with $bc y_1^2 + acy_2^2=y_0^2$ and $(y_1,y_2) \ne (0,0)$ \textup{(}equivalently, $q$ is isotropic\textup{)}, then
            \[
            J(A) = \langle y_0+y_1\quat i + y_2\quat j \rangle_A = \langle y_0+y_1\quat i + y_2\quat j,\  y_1bc + y_0\quat i + y_2c\quat k \rangle_K,
            \]
            and $J(A)^2=\mathbf 0$.
          \item\label{p-radres:abc:3} If there exist no $y_0$, $y_1$,~$y_2 \in K$ with $bc y_1^2 + acy_2^2=y_0^2$ and $(y_1,y_2)\ne (0,0)$ \textup{(}equivalently, $q$ is anisotropic\textup{)}, then $J(A) = \mathbf 0$.
          \end{enumerate}
          In each of these three cases, $A/J(A) \cong K(\sqrt{bc}, \sqrt{\vphantom{b}ac})$.
      \end{enumerate}

    \item\label{p-radres:ab} Suppose $q(xe_1+ye_2+ze_3)=ax^2 + by^2$ with $a$,~$b \in K^\times$.
      Then $q$ is degenerate.
      \begin{enumerate}
        \item If $\chr K \ne 2$, then $J(A) = \langle \quat i,\quat j\rangle_K$ and $J(A)^2=\mathbf 0$.
          We have
          \[
          A/J(A) \cong K[X]/\langle X^2+ab \rangle.
          \]
          Thus $A/J(A) \cong K \oplus K$ if $-ab \in (K^\times)^2$ and $A/J(A) \cong K(\sqrt{-ab})$ otherwise.

        \item Suppose $\chr K=2$.
          \begin{enumerate}
            \item\label{p-radres:ab:1} If $ab =y_0^2$ with $y_0 \in K^\times$, then $J(A) = \langle \quat i, \quat j,\  y_0 + \quat k \rangle_K$ and $J(A)^2 = \langle y_0\quat i+b\quat j \rangle_K=\langle a \quat i+y_0 \quat j \rangle_K$.
              Finally, $J(A)^3 = \mathbf 0$.
            \item\label{p-radres:ab:2} If $ab \in K^\times \setminus (K^\times)^2$, then $J(A) = \langle \quat i, \quat j \rangle_K$ and $J(A)^2=\mathbf 0$.
          \end{enumerate}
          In both cases, $A/J(A) \cong K(\sqrt{ab}) = K(\sqrt{-ab})$.
      \end{enumerate}

    \item\label{p-radres:a} Suppose $q(xe_1+ye_2+ze_3)=ax^2$ with $a \in K$.
      Then $q$ is degenerate, $J(A) = \langle \quat i,\quat j,\quat k \rangle_K$, and $A/J(A) \cong K$.
      We have $J(A)^2=\langle \quat i \rangle_K$ if $a \in K^\times$ and $J(A)^2=\mathbf 0$ if $a=0$.
      In either case, $J(A)^3 = \mathbf 0$.

    \item\label{p-radres:abcu} Suppose $\chr K = 2$ and $q(xe_1+ye_2+ze_3) = ax^2 + by^2 + cz^2 + uyz$ with $a$,~$u \in K^\times$ and $b$,~$c \in K$.
      Then $q$ is nondegenerate, $J(A) = \mathbf 0$, and $A=A/J(A)$ is a quaternion algebra.

    \item\label{p-radres:bcu} Suppose $\chr K = 2$ and $q(xe_1+ye_2+ze_3) = by^2 + cz^2 + uyz$ with $u \in K^\times$ and $b$, $c \in K$.
      Then $q$ is degenerate, $J(A) = \langle \quat j,\quat k\rangle_K$, and $J(A)^2=\mathbf 0$.
      We have
      \[
      A/J(A) \cong K[X]/\langle X^2  + uX + bc\rangle.
    \]
    Thus $A/J(A) \cong K \oplus K$ if $bc=y_0^2+uy_0$ for some $y_0 \in K$, and $A/J(A)$ is a quadratic separable field extension of $K$ otherwise.
  \end{enumerate}
\end{proposition}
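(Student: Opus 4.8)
The plan is to reduce the computation of $J(A)$ to small-dimensional linear algebra, extract the powers $J(A)^i$ by hand from the multiplication table, and read off $A/J(A)$ from a characteristic polynomial; the whole proof is then a (long but routine) case distinction over the five shapes of $(a,b,c,u,v,w)$ listed in the statement.

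By \cref{l-radc0} we have $J(A)=\rad\,(A,\nr)$, so the first task is to understand the quadratic module $(A,\nr)$ on the $4$-dimensional space $A=K\oplus K\quat i\oplus K\quat j\oplus K\quat k$. Reading the coefficients off the displayed formula for $\nr$, the Gram matrix of its associated bilinear form $B$ in the basis $1,\quat i,\quat j,\quat k$ is
\[
\begin{pmatrix}
2 & u & v & w\\
u & 2bc & uv-cw & uw-bv\\
v & uv-cw & 2ac & vw-au\\
w & uw-bv & vw-au & 2ab
\end{pmatrix},
\]
and $A^\perp$ is its kernel. In each of the five cases most of $a,b,c,u,v,w$ vanish, and the surviving terms exhibit $(A,\nr)$ as an orthogonal sum of blocks of dimension at most $2$: for \cref{p-radres:abc,p-radres:ab,p-radres:a} the form is diagonal, while for \cref{p-radres:abcu,p-radres:bcu} one finds one or two $2$-dimensional blocks whose off-diagonal coefficient lies in $K^\times$. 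Combining $(V_1\perp V_2)^\perp=V_1^\perp\perp V_2^\perp$ with \cref{l-radform} then produces $A^\perp$ at once. If $\chr K\ne 2$ this already gives $J(A)$, since then $\rad\,(A,\nr)=A^\perp$; if $\chr K=2$ one must further intersect $A^\perp$ with the zero locus of $\nr$. On $A^\perp$ the restriction of $\nr$ is, in each case, a linear combination of squares $\sum c_i x_i^2$, so its zero locus is again a linear subspace, but which subspace --- and hence $\dim_K J(A)$ --- depends on which of the coefficients (such as $bc$, $ac$, $ab$) are squares in $K$ and on whether the conic $bcy_1^2+acy_2^2=y_0^2$ has a nontrivial zero. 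This is exactly the trichotomy recorded for \cref{p-radres:abc} in characteristic $2$ and the corresponding dichotomy for \cref{p-radres:ab}.

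With generators for $J(A)$ in hand, I would compute $J(A)^2$ and, where asserted, $J(A)^3$ by multiplying the listed generators pairwise using the relations \cref{e-relc0}; since $A$ is Artinian, $J(A)$ is nilpotent, so the tower terminates, and \cref{l-nilpotent} conveniently detects the generators with square $0$. To identify $A/J(A)$ I would use that it is a semisimple $K$-algebra of dimension $4-\dim_K J(A)$: choosing a suitable $x$ and invoking \cref{e-charpoly}, the relation $x^2-\tr(x)x+\nr(x)=0$ displays $A/J(A)$ as $K$, as $K\oplus K$, as a quadratic field extension $K(\sqrt{\,\cdot\,})$, or --- in the $\chr K=2$ case of \cref{p-radres:bcu} --- as $K[X]/\langle X^2+uX+bc\rangle$, matching each claim. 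In the two nondegenerate situations, namely \cref{p-radres:abc} with $\chr K\ne 2$ and \cref{p-radres:abcu}, one has $J(A)=0$, and the assertion that $A$ is a quaternion algebra is the classical fact that the even Clifford algebra of a nondegenerate ternary quadratic form over a field is a quaternion algebra (see \cite{Voight18}); the nondegeneracy itself is verified by checking $d'(q)=4abc+uvw-au^2-bv^2-cw^2\ne 0$ in those cases.

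The main obstacle is organizational rather than conceptual: it lies in the fully degenerate characteristic-$2$ subcases of \cref{p-radres:abc} and \cref{p-radres:ab}, where $J(A)$ is $2$- or $3$-dimensional, $J(A)^2$ is nonzero, and the natural generators are not canonical, so one must check simultaneously that the listed elements span $J(A)$, $J(A)^2$, and $J(A)^3$ and that the products close up consistently. Getting the square/isotropy case split right, and matching it to the three alternatives listed under \cref{p-radres:abc}, is the crux.
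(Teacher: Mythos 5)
Your proposal is correct and shares its backbone with the paper's proof: both rest on \cref{l-radc0} (so that $J(A)=\rad\,(A,\nr)$), on \cref{l-radform} applied to an orthogonal decomposition of $(A,\nr)$, and on the relations \cref{e-relc0} for computing the powers of $J(A)$; the treatment of \labelcref{p-radres:abcu,p-radres:bcu} and of all subcases with $\chr K\ne 2$ is essentially identical. Where you genuinely diverge is in the crux you yourself identify, the diagonal subcases in characteristic $2$. You stay inside quadratic-form land: there $B\equiv 0$, so $A^\perp=A$ and $J(A)$ is the zero locus of the diagonal form $\nr$, which is a $K$-subspace by additivity of the Frobenius, and whose dimension is governed by the $K^2$-linear independence of $1$, $bc$, $ac$, $ab$ --- i.e., exactly by the square/isotropy trichotomy. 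The paper instead observes from \cref{e-relc0} that $A$ is \emph{commutative} in these cases and presents it as $K[X,Y]/\langle X^2+bc,\,Y^2+ac\rangle\cong K[X]/\langle X^2+bc\rangle\otimes_K K[Y]/\langle Y^2+ac\rangle$ (and analogously for \labelcref{p-radres:ab,p-radres:a}), so that $J(A)$ is the nilradical and the descriptions of $J(A)^2$, $J(A)^3$, and $A/J(A)$ fall out of the presentation. Your route is more uniform across the cases; the paper's buys the identification of $A/J(A)$ --- notably $K(\sqrt{bc},\sqrt{\vphantom{b}ac})$ in \labelcref{p-radres:abc:3}, where the quotient is $4$-dimensional and a single characteristic polynomial \cref{e-charpoly} cannot distinguish a biquadratic field from a quaternion algebra --- and the nilpotency computations essentially for free. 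So do record the commutativity of $A$ in the diagonal characteristic-$2$ cases explicitly: you need it for \labelcref{p-radres:abc:3}, and it also makes the verification that your listed $K$-spans coincide with the stated $A$-module generators immediate.
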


\begin{proof}
  We first consider \labelcref*{p-radres:abc,p-radres:ab,p-radres:a} in case $\chr K \ne 2$.
  Here $q(xe_1 + ye_2 + ze_3) = ax^2+by^2+cz^2$ with $a$,~$b$,~$c \in K$ and $\nr(x_0 + x_1\quat i+ x_2\quat j + x_3\quat k) = x_0^2 + bcx_1^2 + acx_2^2 + abx_3^2$.
  Since $\chr K \ne 2$, \cref{l-radc0} implies $J(A)=\rad\, (A,\nr) = A^\perp$, which is straightforward to compute.
  The claims about $J(A)^2$ and $A/J(A)$ then follow using \cref{e-relc0}.

  Now suppose $\chr K = 2$.
  We again first consider \labelcref*{p-radres:abc,p-radres:ab,p-radres:a}, that is, $q(xe_1 + ye_2 + ze_3) = ax^2+by^2+cz^2$ with $a$,~$b$,~$c \in K$ is diagonal.
  Inspection of \cref{e-relc0} shows that, due to $\chr K = 2$, the algebra $A$ is commutative.

  \ref*{p-radres:abc}:
  If $abc\ne 0$, the homomorphism $K[X,Y] \to A$ that maps $X \mapsto \quat i$ and $Y \mapsto \quat j$ induces an isomorphism
  \[
  A \cong K[X,Y]/\langle X^2+bc, Y^2+ac \rangle \cong K[X]/\langle X^2+bc\rangle  \otimes_K K[Y]/\langle Y^2+ac\rangle.
  \]

  \ref*{p-radres:abc:1}:
  If $bc=y_0^2$ and $ac=z_0^2$ with $(y_0,z_0) \in K$, then $K[X]/\langle X^2+bc\rangle \cong K[X]/\langle X+y_0\rangle^2$ and $K[Y]/\langle Y^2+ac\rangle \cong K[Y]/\langle Y+z_0\rangle^2$.
  Thus
  \[
  A \cong K[X,Y]/\langle (X+y_0)^2,(Y+z_0)^2\rangle
  \]
  Since $A$ is commutative and Artinian, $J(A)$ is equal to the nilradical of $A$.
  Thus $J(A)$ is generated by $\quat i+y_0$ and $\quat j+z_0$.
  It has a $K$-basis given by $\quat i+y_0$, $\quat j+z_0$ and $(\quat i+y_0)(\quat j+z_0)=y_0z_0+z_0\quat i+y_0\quat j + c\quat k$, which can easily be transformed into the claimed basis.
  The claims about $J(A)^2$ and $J(A)^3$ follow by direct computation.

  \labelcref*{p-radres:abc:2,p-radres:abc:3}:
  Note first that the condition in \ref*{p-radres:abc:3} implies that $ab$ and $bc$ are non-squares in $K$.
  Moreover, note that, if $bc$ is a non-square, then $bc y_1^2 + acy_2^2 =y_0^2$ has a solution with $(y_1,y_2) \ne (0,0)$ if and only if $ac$ is a square in $K(\sqrt{bc})$.
  (This follows because the squares of $K(\sqrt{bc})$ are precisely the elements of the form $z_0^2+bcz_1^2=(z_0+\sqrt{bc}z_1)^2$ with $z_0$,~$z_1 \in K$.)
  Analogously, if $ac$ is a non-square, then $bc y_1^2 + acy_2^2 =y_0^2$ has a solution with $(y_1,y_2) \ne (0,0)$ if and only if $bc$ is a square in $K(\sqrt{ac})$.

  Suppose now that $bc$ is not a square in $K$.
  Then $K[X]/\langle X^2+bc\rangle \cong K(\sqrt{bc})$ is a (purely inseparable) quadratic field extension of $K$ and
  \[
  A \cong K(\sqrt{bc})[Y]/\langle Y^2 + ac\rangle.
  \]

  In case \ref*{p-radres:abc:2}, the element $ac$ is a square in $K(\sqrt{bc})$; explicitly
  \[
  \sqrt{ac}= y_2^{-1}y_0 + y_2^{-1}y_1 \sqrt{bc}.
  \]
  Thus, $J\big(K(\sqrt{bc})[Y]/\langle Y^2 + ac\rangle\big)$ is generated by the residue class of $Y+y_2^{-1}y_0+y_2^{-1}y_1\sqrt{bc}$.
  It follows that $J(A)$ is generated by $y=y_0 + y_1 \quat i + y_2 \quat j$; a $K$-basis is given by $y$, $\quat i y$.
  We also note that $y^2=0$, hence $J(A)^2=\mathbf 0$.
  In case \ref*{p-radres:abc:3}, the ring $A$ is a biquadratic field extension of $K$ and $J(A) = \mathbf 0$.

  \ref*{p-radres:ab}:
  In this case $\nr(x_0 + x_1\quat i + x_2\quat j + x_3\quat k) = x_0^2 + abx_3^2$ and hence $A^\perp=A$.
  Since $J(A) = \{\, x \in A^\perp : \nr(x) = 0 \,\}$ we have $\langle \quat i,\quat j\rangle_K = \langle \quat i,\quat j \rangle_A \subset J(A)$.
  \cref{e-relc0} imply
  \[
  A / \langle \quat i,\quat j \rangle \cong K[X]/\langle X^2+ab\rangle,
  \]
  where $\quat k$ is mapped to $X$.
  The claims follows similarly to the case $abc\ne 0$.

  \ref*{p-radres:a}:
  Here we have $\nr(x_0 + x_1\quat i + x_2\quat j + x_3\quat k) = x_0^2$ and hence $J(A) = \langle \quat i,\quat j,\quat k \rangle_K$ and $A/J(A) \cong K$.

  Finally we consider \labelcref*{p-radres:abcu,p-radres:bcu} where $\nr(x_0 + x_1\quat i + x_2\quat j + x_3\quat k) = (x_0^2 + ux_0x_1 + bcx_1^2) + a(cx_2^2 - u x_2x_3 + bx_3^2)$.

  \labelcref*{p-radres:abcu}:
  If $a \in K^\times$, then $d'(q) = -au^2 \ne 0$.
  Thus $q$ is nondegenerate.
  The form $q$ is similar to $x^2 + b' y^2 + c' z^2 + yz$, and hence $A$ is easily recognized as a quaternion algebra from \cref{e-relc0}.
  It follows that $J(A)=\mathbf 0$.

  \labelcref*{p-radres:bcu}:
  Since $a = 0$, we have $A^\perp = \langle \quat j,\quat k \rangle_K$.
  Since $\nr$ vanishes on all of $A^\perp$, it follows that $J(A) = \langle \quat j,\quat k\rangle_K$.
  From \cref{e-relc0} one deduces $J(A)^2=\mathbf 0$ as well as the stated form of $A/J(A)$.
\end{proof}

\begin{remark}
  If $\chr K=2$ and $K$ is perfect, then every element in $K$ is a square, and hence the dyadic case simplifies considerably.
  In our later applications, this will correspond to the assumption that all residue fields of characteristic $2$ are perfect.
\end{remark}

Let $D$ be a DVR, let $K$ be its quotient field, and let $R$ be an order in a quaternion algebra over $K$.
Let $M$ be a free $D$-module of rank $3$ with basis $e_1$, $e_2$,~$e_3$.
There exists a quadratic form $q\colon M \to D$ with $d'(q) \ne 0$ such that $R\cong C_0(M,q)$.
(This follows from \cite[Theorem B]{Voight11} by specializing to the case of DVRs; alternative see \cite[Main Theorem 22.4.1 or Proposition 22.4.2]{Voight18}.)
Since similar forms, and thus in particular isometric forms, give rise to isomorphic orders, we may assume that either
\[
q(xe_1 + ye_2 + ze_3) = ax^2 + by^2 + cz^2
\]
with $\val(a) \le \val(b) \le \val(c)$ and $4abc \ne 0$, or $\chr D/\pi D = 2$ and
\[
q(xe_1 + ye_2 + ze_3) = ax^2 + by^2 + cz^2 + uyz
\]
with $a(4bc - u^2)\ne 0$ and $\val(u) < \val(2b) \le \val(2c)$.
(See \cite[Proposition 3.10]{Voight13} or \cite[(15.1)]{Kneser02}.)

Using this and the previous proposition, we make some preliminary structural observations about quaternion orders.

\begin{corollary} \label{c-resprop}
  Let $D$ be a DVR, let $K$ be its quotient field, and let $R$ be an order in a quaternion algebra over $K$.
  \begin{enumerate}
    \item \label{c-resprop:dim} $R/J(R)$ is a finite-dimensional $D/\pi D$-algebra and $\dim_{D/ \pi D} R/J(R) \in \{1,2,4\}$.
    \item \label{c-resprop:max} If $\dim_{D/\pi D} R/J(R) = 4$ \textup{(}equivalently, $J(R/\pi R)=\mathbf 0$\textup{)}, then $R$ is a maximal order.
    \item \label{c-resprop:comm} If $R$ is not a maximal order, then $R/J(R)$ is commutative.
      In particular, $J(R/J(R))=\sqrt{R/J(R)}$.
    \item \label{c-resprop:local} If $R$ is not an Eichler order, then $R$ is a local ring \textup{(}that is, $R/J(R)$ is a division ring\textup{)}.
  \end{enumerate}
\end{corollary}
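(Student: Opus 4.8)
We deduce all four parts from the presentation $R\cong C_0(M,q)$ of $R$ as an even Clifford algebra, together with \cref{p-radres}, after reducing modulo $\pi$; write $k=D/\pi D$. For Part \ref*{c-resprop:dim}: since $R$ is module-finite over the Noetherian local ring $D$, a Nakayama argument shows that $\pi$ annihilates every simple $R$-module, so $\pi R\subseteq J(R)$; hence $R/\pi R$ is a $4$-dimensional Artinian $k$-algebra and $R/J(R)=(R/\pi R)/J(R/\pi R)$. Since forming even Clifford algebras commutes with base change, $R/\pi R\cong C_0(\overline M,\overline q)$, where $\overline q$ is the reduction of the normal form of $q$ recalled before the corollary; according to which coefficients reduce to units, $\overline q$ falls into one of the cases of \cref{p-radres} (possibly with some coefficients equal to $0$), and in every such case that proposition exhibits an explicit $k$-basis of $J(R/\pi R)$ of size $0$, $2$, or $3$ --- never $1$. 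Thus $\dim_k R/J(R)\in\{1,2,4\}$, with the value $4$ occurring exactly when $J(R/\pi R)=\mathbf 0$, i.e. $J(R)=\pi R$.

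For Part \ref*{c-resprop:max}: if $\dim_k R/J(R)=4$ then $J(R)=\pi R$ is an invertible two-sided ideal (free of rank one as a left $R$-module), so $R$ is hereditary, and $R/\rad(R)=R/\pi R$ is $4$-dimensional and simple (as \cref{p-radres} yields in every case with $\dim_k=4$); a hereditary order whose radical quotient is simple is maximal, by the classification of hereditary orders (if necessary, passing to the completion $\widehat R$, which is again hereditary with simple radical quotient, hence maximal, and using that maximality descends to $R$). Part \ref*{c-resprop:comm} is then immediate: if $R$ is not maximal, Part \ref*{c-resprop:max} forces $\dim_k R/J(R)\in\{1,2\}$, and in each corresponding case of \cref{p-radres} the ring $R/J(R)$ is commutative (it is $k$, a quadratic field extension of $k$, or $k\times k$); a commutative Artinian ring has Jacobson radical equal to its nilradical, so $J(R/J(R))=\sqrt{R/J(R)}$ (both $\mathbf 0$ here).

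For Part \ref*{c-resprop:local}: the key observation is that idempotents of the semisimple ring $R/J(R)=\widehat R/J(\widehat R)$ lift to idempotents of the complete ring $\widehat R$. If $\widehat A$ is a division ring, it has no idempotents other than $0$ and $1$, so $R/J(R)$ --- being semisimple with no nontrivial idempotents --- is a division ring, and $R$ is local. If $\widehat A\cong M_2(\widehat K)$ and $R$ is not Eichler, then $R$ is not maximal (a maximal order with $\widehat A\cong M_2(\widehat K)$ has $\widehat R\cong M_2(\widehat D)$, an Eichler order of level $0$), so by Parts \ref*{c-resprop:dim}--\ref*{c-resprop:comm} the ring $R/J(R)$ is commutative of dimension $1$ or $2$; the only possibility that is not a division ring is $R/J(R)\cong k\times k$. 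Lifting a nontrivial idempotent to $e\in\widehat R$ and conjugating inside $M_2(\widehat K)$ so that $\widehat R\subseteq M_2(\widehat D)$ and $e=\begin{bmatrix}1&0\\0&0\end{bmatrix}$, the Peirce decomposition of $\widehat R$ with respect to $e$ has diagonal corners forced to equal $\widehat D$ and off-diagonal corners equal to ideals of $\widehat D$, so $\widehat R=\begin{bmatrix}\widehat D&\pi^{n_1}\widehat D\\\pi^{n_2}\widehat D&\widehat D\end{bmatrix}$ is Eichler --- hence $R$ is Eichler, a contradiction. Therefore $R/J(R)$ is a division ring, i.e. $R$ is local.

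Parts \ref*{c-resprop:dim}--\ref*{c-resprop:comm} are essentially bookkeeping on top of \cref{p-radres}; the main obstacle I expect is Part \ref*{c-resprop:local} --- in particular the careful argument (via lifting an idempotent and reading off the Peirce decomposition) that $R/J(R)\cong k\times k$ forces $R$ to be Eichler, together with keeping track throughout of the passage between $R$ and its completion $\widehat R$.
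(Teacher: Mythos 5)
Your proposal is correct. Parts \ref{c-resprop:dim}--\ref{c-resprop:comm} follow the paper's own route almost verbatim: reduce modulo $\pi$, identify $R/\pi R$ with $C_0(M/\pi M,\overline q)$, and read the possible radical quotients off \cref{p-radres}; for part \ref{c-resprop:max} the paper argues that the only non-maximal hereditary orders are non-maximal Eichler orders (whose radical quotient is $2$-dimensional), whereas you invoke the classification in the equivalent form ``hereditary with simple radical quotient implies maximal'' --- a cosmetic difference. The genuine divergence is in part \ref{c-resprop:local}. The paper simply observes that $R/J(R)$ is semisimple (so $\res[X]/\langle X^2\rangle$ is excluded) and cites Brzezi\'nski for the implication that $R/J(R)\cong\res\oplus\res$ forces $R$ to be an Eichler order. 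You instead prove that implication: lift the nontrivial idempotent to the complete ring $\widehat R$, diagonalize it in $M_2(\widehat K)$, and compute the Peirce decomposition, identifying the diagonal corners as orders in $\widehat K$ (hence equal to $\widehat D$) and the off-diagonal corners as fractional ideals, which after a diagonal conjugation yields the standard Eichler form. This is essentially a proof of the cited result of Brzezi\'nski and is sound (and your handling of the case where $\widehat A$ is a division ring, via the absence of nontrivial idempotents, is also fine); what it buys is self-containedness at the cost of length, and it requires the small additional observations you compress --- that $R/J(R)\cong\widehat R/J(\widehat R)$, that idempotents lift along the complete ring $\widehat R$, and that maximality and the Eichler property pass between $R$ and $\widehat R$.
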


\begin{proof}
  Let $\res = D/\pi D$ and let $B=R/\pi R$.
  Then $\pi R \subset J(R)$ and $R/J(R) \cong B/J(B)$ (see \cite[Theorem 6.15]{Reiner}).
  In particular, $R/J(R)$ is a finite-dimensional $\res$-algebra.
  Since $R \cong C_0(M,q)$ we find that $B \cong C_0(M/\pi M, \overline{q})$, where $\overline q$ is the reduction of the quadratic form $q$ modulo $\pi D$.
  Thus we can apply the results of \cref{p-radres} to the $\res$-algebra $B$.

  \ref*{c-resprop:dim}
  The claim about the dimensions follows by inspection of the cases in \cref{p-radres}.

  \ref*{c-resprop:max}
  The inclusion $\pi R \subset J(R)$ together with $\dim_\res R/J(R)=\dim_\res R/\pi R = 4$ implies $J(R)=\pi R$.
  Therefore $J(R)$ is invertible and hence $R$ is hereditary by \cite[Theorem 39.1]{Reiner}.
  The only non-maximal hereditary orders are the non-maximal Eichler orders, where $R/J(R) \cong \res \oplus \res$.
  Thus $R$ must be maximal.

  \ref*{c-resprop:comm}
  If $R$ is not maximal, we must have $\dim_\res R/J(R) \in \{1,2\}$ by \ref*{c-resprop:dim} and \ref*{c-resprop:max}.
  Thus $R/J(R)$ is commutative.
  (Alternatively, this also follows by inspection from \cref{p-radres}.)
  Since $R/J(R)$ is commutative and Artinian, the Jacobson radical and the nilradical coincide.

  \ref*{c-resprop:local}
  If $R$ is not an Eichler order, it is in particular not maximal, and hence $\dim_{\res} R/J(R) \in \{1,2\}$.
  Thus $R/J(R)$ is isomorphic to  $\res $, to $\res[X]/\langle X^2\rangle$, to $\res \oplus \res$, or to  a quadratic field extension of $\res$.
  But $R/J(R) \cong \res[X]/\langle X^2 \rangle$ is impossible since $\sqrt{R/J(R)} =\mathbf 0$, and $R/J(R)\cong \res \oplus \res$ would imply that $R$ is an Eichler order (see \cite[Proposition 2.1]{Brzezinski83}).
  Thus $R/J(R)$ is a field.
\end{proof}

\begin{proposition} \label{p-quatmat}
  Let $(M,q)$ be a ternary quadratic module over a commutative domain $D$.
  Suppose $d'(q) \ne 0$, let $R=C_0(M,q)$, and let $A = K \otimes_D C_0(M,q)$ be its quotient ring.
  The following statements are equivalent:
  \begin{equivenumerate}
    \item \label{p-quatmat:mat}$A \cong M_2(K)$.
    \item \label{p-quatmat:isotropic}$(R, \nr)$ is isotropic.
    \item \label{p-quatmat:0isotropic} $(R_0, \nr|_{R_0})$ is isotropic, where $R_0 = \{\, x \in R : \tr(x) = 0 \,\} = \langle 1 \rangle^\perp$.
    \item \label{p-quatmat:qisotropic} $(M,q)$ is isotropic.
  \end{equivenumerate}
\end{proposition}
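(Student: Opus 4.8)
The plan is to establish the cyclic chain of implications
\[
  \text{(d)} \Rightarrow \text{(b)} \Rightarrow \text{(a)} \Rightarrow \text{(c)} \Rightarrow \text{(d)}.
\]
Throughout I would use that a quadratic module over $D$ with torsion-free underlying module is isotropic if and only if its $K$-span is: multiplying a $K$-rational isotropic vector by a suitable power of $\pi$ yields a $D$-rational one. Since $R$, $R_0$ span $A$, $A_0$ and $R_0 = R \cap A_0$, this means (b) and (c) are equivalent to isotropy of $(A,\nr)$, respectively $(A_0,\nr)$, over $K$, and (d) is equivalent to isotropy of $(V,q)$ over $K$, where $V = K\otimes_D M$. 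Recall also that $A$ is a central simple $K$-algebra of dimension $4$, hence either a division ring or isomorphic to $M_2(K)$.

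For (d) $\Rightarrow$ (b) I would give a direct Clifford-algebra computation. Let $0 \ne w \in M$ with $q(w) = 0$; inside $C(M,q)$ we have $w^2 = q(w) = 0$. Since $d'(q) \ne 0$, the form $q$ is nondegenerate, so $w \notin M^\perp$ (otherwise $w$ would lie in $\rad(M,q)$, which is $\mathbf 0$ by nondegeneracy); choose $v \in M$ with $B(v,w) \ne 0$. Then $vw \in C_0(M,q) = R$, and, because the standard involution reverses products of elements of $M$ (it sends $\quat i = e_2e_3$ to $e_3e_2 = u - \quat i$), we have $\overline{vw} = wv$, hence
\[
  \nr(vw) = vw\,\overline{vw} = v\,w^2\,v = q(w)\,v^2 = 0
  \qquad\text{and}\qquad
  \tr(vw) = vw + wv = B(v,w) \ne 0.
\]
In particular $vw \ne 0$, so $(R,\nr)$ is isotropic.

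The implication (b) $\Rightarrow$ (a) is immediate: if $0 \ne x \in R$ with $\nr(x) = x\overline x = 0$, then $\overline x = \tr(x) - x \ne 0$ (otherwise $x = \overline{\overline x} = 0$), so $A$ has nonzero zero divisors and thus cannot be a division ring; hence $A \cong M_2(K)$. The implication (a) $\Rightarrow$ (c) is equally easy: in $A = M_2(K)$ the matrix unit $E_{12}$ has $\tr(E_{12}) = 0$ and $\nr(E_{12}) = \det(E_{12}) = 0$, so $E_{12}$ is a nonzero isotropic vector of $(A_0,\nr)$, and $(R_0,\nr)$ is isotropic by the scaling remark.

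The remaining implication (c) $\Rightarrow$ (d) is where the structure of $C_0(M,q)$ genuinely enters, and I expect it to be the main obstacle. Working over $K$, I would put $q$ in normal form: if $\chr K \ne 2$ then $q \cong \langle a,b,c\rangle$ with $abc \ne 0$ (forced by $d'(q) \ne 0$), while if $\chr K = 2$ the diagonal form is degenerate, so $q \cong ax^2 + by^2 + cz^2 + uyz$ with $au \ne 0$. In either case one reads off from \cref{e-relc0} the restriction of $\nr$ to $A_0 = \langle 1\rangle^\perp$ and checks, by a short direct calculation, that it is similar to $q$ --- with scaling factor $abc$ in the first case ($\nr|_{A_0} \cong \langle bc,ac,ab\rangle \cong (abc)\langle a,b,c\rangle$) and $a$ in the second ($\nr|_{A_0} \cong \langle 1\rangle \perp a(by^2+uyz+cz^2) \cong a\cdot q$). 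Thus isotropy of $(A_0,\nr)$ over $K$ is equivalent to isotropy of $(V,q)$ over $K$, i.e.\ to (d). This similarity $\nr|_{A_0} \sim q$ is the quadratic-space incarnation of the Brzeziński--Voight correspondence \cite{Brzezinski82,Gross-Lucianovic09,Voight11} and could alternatively be quoted from \cite[Chapter 22]{Voight18}; the only real work is keeping track of the characteristic-$2$ normal form separately from the diagonal one.
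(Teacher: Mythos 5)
Your proof is correct, and while it overlaps substantially with the paper's, it is organized differently and is more self-contained in one place. The paper proves (a) $\Rightarrow$ (c) $\Rightarrow$ (b) by noting that $\nr$ is the determinant on $M_2(K)$, proves (b) $\Rightarrow$ (a) exactly as you do (a nonzero norm-zero element of $R$ is a zero divisor, so $A$ cannot be a division ring), and then disposes of (c) $\Leftrightarrow$ (d) in one line by citing Kneser's isometry between $(R_0, d'(q)\nr|_{R_0})$ and $(M,q)$. You replace that citation with the explicit normal-form computation showing $\nr|_{A_0}$ is similar to $q$ (with similarity factor $abc$ in the diagonal case and $a$ in the dyadic case), which is precisely the content of the cited result in the ternary setting and checks out in both characteristics; and you add a genuinely different, pleasantly direct argument for (d) $\Rightarrow$ (b): the element $vw$ with $q(w)=0$ and $B(v,w)\ne 0$ is a nonzero element of $R$ of reduced norm zero, obtained without invoking the isometry at all (your identifications $\overline{vw}=wv$ and $\tr(vw)=B(v,w)$ are correct, since the standard involution on $C_0(M,q)$ is the reversal map). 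The only cosmetic slip is the phrase ``a suitable power of $\pi$'' in your clearing-denominators remark: in this proposition $D$ is an arbitrary commutative domain, not a DVR, so one should multiply by a common denominator $d \in D \setminus \{0\}$ instead; the argument is otherwise unaffected.
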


\begin{proof}
  Since $q$ is nondegenerate, the ring $R$ is a quaternion order in the quaternion algebra $A$ (see \cite[Theorem 22.3.1]{Voight18}).
  
  \ref*{p-quatmat:mat}$\ \Rightarrow\ $\ref*{p-quatmat:0isotropic}$\ \Rightarrow \ $\ref*{p-quatmat:isotropic} is clear since $\nr(x)=\det(x)$ for $x \in A$ (a consequence of the uniqueness of the standard involution).

  \ref*{p-quatmat:isotropic}$\ \Rightarrow\ $\ref*{p-quatmat:mat}:
  Since $A$ is a quaternion algebra, it must be either a division ring or isomorphic to $M_2(K)$.
  It cannot be the former, because any element $x \in R$ with $\nr(x)=0$ is a zero-divisor.

  \ref*{p-quatmat:0isotropic}$\ \Leftrightarrow \ $\ref*{p-quatmat:qisotropic} holds since $(R_0, d'(q)\nr|_{R_0})$ and $(M,q)$ are isometric by \cite[(6.20)]{Kneser02}.
\end{proof}

With all necessary preliminary results laid out, we now arrive at the main result of this section.

\begin{theorem}\label{t-noneichler}
  Let $D$ be a DVR with quotient field $K$, and $R$ a quaternion $D$-order in $M_2(K)$.
  \begin{enumerate}
    \item\label{t-noneichler:nilpotent} There exists $z \in J(R) \setminus J(R)^2$ with $\nr(z)=0$.
    \item\label{t-noneichler:rho} If $R$ is not an Eichler order \footnote{If $R$ is a non-maximal Eichler order, then the same holds by \cref{l-eichler-longatom}. If $R$ is a maximal order, then $R \cong M_2(D)$. In this case $z=\begin{bsmallmatrix} 0 & \pi \\ 0 & 0 \end{bsmallmatrix}$ satisfies $z \in J(R) \setminus J(R)^2$.},
      then there exists an $N \in \bN_0$, such that for every $n \ge N$, there exists an atom $u \in R^\bullet$ with $\val(\nr(u))=2n$.
  \end{enumerate}
\end{theorem}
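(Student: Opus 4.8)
The plan is to prove part~(1) first and then deduce part~(2). For part~(1), first suppose $R$ is an Eichler order; then, conjugating $R$ into its standard form $\begin{bsmallmatrix} D & \pi^nD\\ D & D\end{bsmallmatrix}$, the matrix $z=\begin{bsmallmatrix} 0 & \pi^{\max(n,1)}\\ 0 & 0\end{bsmallmatrix}$ visibly has $\nr(z)=\det(z)=0$ and $z\in J(R)\setminus J(R)^2$ (by inspection of entries). So assume from now on that $R$ is not an Eichler order; then $R$ is a local ring by \subref{c-resprop:local}, so $J(R)=R\setminus R^\times$. Writing $B=R/\pi R$, we have $\pi R\subset J(R)$ and $J(R)/\pi R=J(B)$, whence the preimage of $J(B)^2$ under $R\twoheadrightarrow B$ equals $J(R)^2+\pi R\supseteq J(R)^2$. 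It therefore suffices to produce a \emph{primitive} $z\in R$ with $\nr(z)=0$ whose image $\bar z\in B$ lies outside $J(B)^2$: such a $z$ is a non-unit, hence lies in $J(R)$, and $\bar z\notin J(B)^2$ forces $z\notin J(R)^2$.

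To find such a $z$: since $A\cong M_2(K)$ is split, the reduced norm form on $A$ is isotropic over $K$, so its restriction to $R$ admits a primitive integral isotropic vector. To arrange in addition that $\bar z\notin J(B)^2$, I would use the normal form $R\cong C_0(M,q)$ for a nondegenerate ternary quadratic module $(M,q)$ over $D$ and examine the cases of \cref{p-radres} applied to $B\cong C_0(M/\pi M,\overline q)$. Whenever $J(B)^2=\mathbf 0$ --- which, by inspection of \cref{p-radres}, is every case except \labelcref{p-radres:abc:1}, \labelcref{p-radres:ab:1}, and \labelcref{p-radres:a} with $\overline a\neq 0$ --- primitivity of $z$ already gives $\bar z\neq 0$, hence $\bar z\notin J(B)^2$. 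The three remaining cases all have $\chr D/\pi D=2$ and $\dim_{D/\pi D}J(B)^2=1$; there one has to analyse closely how isotropy of the reduced norm form $x_0^2+bcx_1^2+acx_2^2+abx_3^2$ on $R$ (or of its analogue when $q$ carries a cross term $uyz$) can hold, and use the resulting freedom to choose an isotropic vector whose $\quat j$- or $\quat k$-coordinate (or the relevant analogue) is a unit, which puts $\bar z$ outside $J(B)^2$. I expect this dyadic case analysis to be the main obstacle.

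Given part~(1), part~(2) follows by a perturbation. Fix $z\in J(R)\setminus J(R)^2$ with $\nr(z)=0$; since $A$ is central simple, the bilinear form $B(x,y)=\tr(x\overline y)$ associated to $\nr$ is nondegenerate, so there is $w\in R$ with $B(z,w)\neq 0$, and we set $b_0=\val(B(z,w))<\infty$. For an integer $k\geq 2$ put $u_k=z+\pi^kw$. Then $u_k\in J(R)$, and $u_k\notin J(R)^2$, because $\pi^kw\in\pi^2R=(\pi R)^2\subseteq J(R)^2$ while $z\notin J(R)^2$. Moreover
\[
  \nr(u_k)=\nr(z)+\pi^kB(z,w)+\pi^{2k}\nr(w)=\pi^kB(z,w)+\pi^{2k}\nr(w),
\]
and for $k>b_0$ this has valuation exactly $k+b_0<\infty$; in particular $\nr(u_k)\neq0$, so $u_k\in J(R)^\bullet\setminus J(R)^2$ is an atom of $R^\bullet$ by \subref{l-pipower:1}, with $\val(\nr(u_k))=k+b_0$. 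As $k$ runs over the integers $\geq b_0+1$, the quantity $k+b_0$ runs over all integers $\geq 2b_0+1$; so, setting $N=b_0+1$ and, for $n\geq N$, $k=2n-b_0\geq2$, we obtain an atom $u=u_k\in R^\bullet$ with $\val(\nr(u))=2n$, as required.
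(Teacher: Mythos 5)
Your part~(2) is correct and is essentially the paper's argument, in a slightly cleaner form: the paper perturbs $z$ by $\pi^k\cdot 1$ and splits on whether $\tr(z)=B(z,1)$ vanishes, whereas you perturb by $\pi^k w$ with $B(z,w)\ne 0$ chosen in advance, so the valuation $\val(\nr(z+\pi^k w))=k+\val(B(z,w))$ is exactly computable in all cases. (The existence of such a $w\in R$ follows, as you say, from nondegeneracy of the trace pairing on $M_2(K)$ together with $KR=A$.) Your reduction of part~(1) to exhibiting a primitive isotropic $z$ with $\bar z\notin J(B)^2$ is also the right framework.

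The genuine gap is precisely where you say you expect it: in part~(1) you never produce the required $z$ in the residual cases where $J(B)^2\ne\mathbf 0$, namely \labelcref{p-radres:abc:1}, \labelcref{p-radres:ab:1}, and \labelcref{p-radres:a} with $\overline a\ne 0$. An arbitrary primitive isotropic vector can perfectly well land in the preimage of the one-dimensional space $J(B)^2$, so "primitivity" alone does not finish these cases, and the "dyadic case analysis" you defer is the actual content of the theorem here. The paper resolves it with one uniform trick: writing $q=ax^2+by^2+cz^2+uyz$, it isotropizes the rescaled form $aq=(ax)^2+a(by^2+cz^2+uyz)$ to obtain $z_0,z_2,z_3\in D$ with $z_0^2+a(bz_3^2+cz_2^2+uz_2z_3)=0$ and, after removing the content, $\min\{\val(z_2),\val(z_3)\}=0$; it then sets $z=z_0+z_2\quat j-z_3\quat k$, which satisfies $\nr(z)=0$ by the explicit norm formula. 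The point of this choice is that $z$ has \emph{zero} $\quat i$-coordinate while its $\quat j$- or $\quat k$-coordinate is a unit, and in each of the three problematic cases of \cref{p-radres} the space $J(B)^2$ is spanned by a single element whose $\quat i$-coefficient is nonzero ($z_0$, $y_0$ resp.\ $a$, and $\quat i$ itself); hence $\bar z\notin J(B)^2$ automatically. Without this (or some equivalent) explicit construction, your proof of part~(1) --- and therefore of part~(2), which depends on it --- is incomplete.
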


\begin{proof}
  \ref*{t-noneichler:nilpotent}
  Let $M$ be a free $D$-module of rank $3$ with basis $e_1$, $e_2$,~$e_3$.
  There exists a quadratic form $q\colon M \to D$ with $d'(q) \ne 0$ such that $R=C_0(M,q)$.
  Since isometric forms give rise to isomorphic orders, we may again assume that either
  \[
  q(xe_1 + ye_2 + ze_3) = ax^2 + by^2 + cz^2
  \]
  with $\val(a) \le \val(b) \le \val(c)$ and $4abc \ne 0$, or $\chr D/\pi D = 2$ and
  \[
  q(xe_1 + ye_2 + ze_3) = ax^2 + by^2 + cz^2 + uyz
  \]
  with $a(4bc-u^2)\ne 0$ and $\val(u) < \val(2b) \le \val(2c)$.
  By assumption, $K \otimes_D R \cong M_2(K)$, and thus the quadratic module $(M \otimes_D K,q)$ is isotropic.
  By clearing denominators, we see that also $(M,q)$ is isotropic.

  We write $q$ as $q(xe_1 + ye_2 + ze_3) = ax^2 + by^2 + cz^2 + uyz$ (possibly with $u=0$).
  Since $q$ is isotropic, the same is true for $aq(x e_1 + y e_2 + z e_3) = (ax)^2 + a(by^2 + cz^2 + uyz)$.
  Thus there exist $z_0$, $z_2$,~$z_3 \in D$ such that $z_0^2 + a(b z_3^2 + c z_2^2 + uz_2z_3)=0$.
  We may also assume $\min\{\val(z_0), \val(z_2), \val(z_3) \} = 0$.
  Since $\val(z_0)=0$ and $\val(z_2)$,~$\val(z_3) > 0$ would lead to a contradiction, we must have $\min\{\val(z_2), \val(z_3)\} = 0$.
  Set $z=z_0 + z_2 \quat j - z_3 \quat k \in R$, so that $\nr(z) = 0$.
  This implies that $z$ is a zero-divisor in $R$.
  Since $R$ is not an Eichler order, it is a local ring, and hence $z \in J(R)$.

  Note that $R/\pi R \cong C_0(M/\pi M, \overline q)$ where $\overline q\colon M/\pi M \to D/\pi D$ is the reduction of $q$ modulo $\pi D$.
  Since the $\quat i$-coordinate of $z$ is zero while $\min\{\val(z_2), \val(z_3)\} = 0$, inspection of the individual cases for $\overline q$ in \cref{p-radres} shows $z+ \pi R \not \in J(R/\pi R)^2$.
  Hence $z \not \in J(R)^2$.

  \ref*{t-noneichler:rho}
  The classification of orders above implies that any non-Eichler order is a local ring.
  It follows that $J(R)^\bullet \setminus J(R)^2 \subset \cA(R^\bullet)$.
  Let $z$ be as in \ref*{t-noneichler:nilpotent}.
  Then $\nr(\pi^k + z) =\nr(\pi^k) + \nr(z) + \tr(\pi^k z) = \pi^{2k} + \pi^k\tr(z)$ for all $k \in \bN_0$.
  For $k \ge 2$, we have $\pi^k \in J(R)^2$ and hence $\pi^k + z \in J(R) \setminus J(R)^2$.
  Therefore $\pi^k+z$ is an atom of $R^\bullet$.
  If $\tr(z)=0$, then $\val(\nr(\pi^k + z)) = 2k$.
  Otherwise, $\val(\tr(z))$ is constant, and, for large enough $k$, we find $\val(\nr(\pi^k+z)) = k + \val(\tr(z))$.
  In particular, by suitable choice of $k$, we can realize any sufficiently large even value for $\val(\nr(\pi^k+z))$.
\end{proof}

\begin{corollary} \label{c-m2-minlen}
  If $R \subset M_2(K)$ is a non-Eichler order and $a \in R^\bullet$, there exists an $M \in \bN_0$ such that $\min\sL(a) \le M$ for all $a \in R^\bullet$.
\end{corollary}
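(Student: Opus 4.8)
The plan is to deduce this directly from \cref{t-noneichler} together with \cref{l-pipower}. Since $R$ is a non-Eichler order it is in particular not maximal, hence a local ring by \subref{c-resprop:local}, so \subref{l-pipower:2} provides an $N \in \bN_0$ such that every $x \in R^\bullet$ can be written as $x = \pi^m\varepsilon u_1\cdots u_n$ with $m \in \bN_0$, $\varepsilon \in R^\times$, atoms $u_1,\ldots,u_n \in \cA(R^\bullet)$, and $n < N$. Concatenating a shortest factorization of $\pi^m$ with the factorization $(\varepsilon u_1)u_2\cdots u_n$ (and treating the trivial cases $n = 0$ or $m = 0$ separately) yields $\min\sL(x) \le \min\sL(\pi^m) + N$. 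Thus it suffices to bound $\min\sL(\pi^m)$ by a constant not depending on $m$.

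For this I would invoke \subref{t-noneichler:rho}: there is an $N' \in \bN_0$ so that for every $n \ge N'$ there is an atom $u \in \cA(R^\bullet)$ with $\val(\nr(u)) = 2n$. For such a $u$ we have $u\overline u = \nr(u) = \eta\pi^{2n}$ for some $\eta \in D^\times$, and since the conjugate of an atom is again an atom, $\pi^{2n} = (\eta^{-1}u)\overline u$ exhibits $\pi^{2n}$ as a product of two atoms; hence $\min\sL(\pi^{2n}) \le 2$ for all $n \ge N'$. For odd exponents, $\pi^{2n+1} = \pi\cdot\pi^{2n}$ together with $\min\sL(\pi) \le 2$ (by \subref{l-rho:denom}, either $\pi$ is an atom or it is a product of two atoms up to units) gives $\min\sL(\pi^{2n+1}) \le 4$ for $n \ge N'$. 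Finally, for the finitely many exponents $m \le 2N'$ one has the crude bound $\min\sL(\pi^m) \le m\min\sL(\pi) \le 2m \le 4N'$. Altogether $\min\sL(\pi^m) \le C$ for all $m \in \bN_0$, where $C = \max\{4, 4N'\}$ depends only on $R$, and therefore $M := C + N$ works.

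I do not expect a genuine obstacle here: both inputs (\cref{l-pipower} and \cref{t-noneichler}) do the substantive work, and what remains is bookkeeping — the reduction of an arbitrary element to a power of $\pi$, and the separate treatment of even versus odd exponents. The only point worth making explicit is the conceptual one: \subref{t-noneichler:rho} produces atoms whose reduced norms are arbitrarily large powers of $\pi$, and each such atom immediately furnishes a length-$2$ factorization $\pi^{2n} = (\eta^{-1}u)\overline u$. The existence of these uniformly short factorizations of the central elements $\pi^m$ is exactly what forces $\min\sL$ to be globally bounded on $R^\bullet$, in contrast with $\sup\sL$, which by \subref{l-rho:rhok} is unbounded here precisely because atoms can have arbitrarily large norm valuation.
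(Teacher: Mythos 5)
Your proof is correct and follows essentially the same route as the paper's: both reduce via \subref{l-pipower:2} to a power of $\pi$ times a bounded number of atoms, then use the atoms of norm $\pi^{2n}$ from \subref{t-noneichler:rho} to factor large even powers of $\pi$ as a product of two atoms, handling odd and small exponents separately via $\min\sL(\pi)\le 2$. The only difference is organizational (you bound $\min\sL(\pi^m)$ uniformly first, while the paper splits the exponent as $2m+d$ from the outset), which does not affect the substance.
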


\begin{proof}
  By \subref{l-pipower:2}, there exists an $N_1 \in \bN_0$ such that every $x \in R^\bullet$ can be written in the form $a=\pi^{2m+d} \varepsilon u_1\cdots u_n$ with $n < N_1$, $m \in \bN_0$, $d \in \{0,1\}$, $\varepsilon \in R^\times$, and $u_1$, $\ldots\,$,~$u_n \in \cA(R^\bullet)$.

  Let $N_2 \in \bN_0$ be the constant from \subref{t-noneichler:rho}.
  If $m \ge N_2$, then there exists $u \in \cA(R^\bullet)$ such that $\val(\nr(u))=2m$, and hence $\pi^{2m}=u\overline{u} \delta$ for some $\delta \in D^\times$.
  Then
  \[
    a = u \overline{u} \delta\varepsilon u_1\cdots u_n
  \]
  shows $\min \sL(a) \le 2+n < 2+N_1$.
  If $m < N_2$, then $\min \sL(\pi) \le 2$ implies $\min\sL(a) \le 2(2m+d) + n < 4N_2 + 2 + N_1$.
  Thus the claim holds with $M=4N_2 + 1 + N_1$.
\end{proof}

From \cref{t-noneichler} and \cref{c-m2-minlen} it is now easy to deduce the factorization-theoretic properties for non-Eichler orders in $M_2(K)$ that we claim in the introduction.
We do so in a more general setting in the next section.

\section{Putting everything together: Non-complete case}
\label{s-wrapup}

We have so far finished the proof of \cref{t-main} in the case of orders for which $\widehat A$ is a division ring (in \cref{s-division}).
We have also treated all non-hereditary orders in $M_2(K)$ in \cref{s-eichler,s-noneichler} (but not yet derived the final results in these cases).
What is missing is the case where $A$ is a division ring, but $\widehat A \cong M_2(\widehat K)$.

To wrap everything up, and to cover this last case, we will first show in this section that the properties in \cref{t-lengths,l-mindelta,c-m2-minlen} carry over from $\widehat R$ to $R$.
In particular, we show that there is a transfer homomorphism from $R^\bullet$ to $\widehat{R}^\bullet$.
We then show how to derive \cref{t-main} from these properties.
We begin with a more general result about isoatomic transfer homomorphisms.

\begin{proposition} \label{p-strongtransfer}
  Let $H$, $T$ be monoids and let $\varphi \colon H \to T$ be a homomorphism such that
  \begin{itemize}
    \item $\varphi$ is a left divisor homomorphism \textup{(}that is, whenever $a$, $b \in H$ are such that $\varphi(b) \in \varphi(a)T$, then $b \in aH$\textup{)},
    \item $\varphi(H) T^\times = T$.
  \end{itemize}
  Then:
  \begin{enumerate}
  \item\label{p-strongtransfer:transfer} $\varphi$ is an isoatomic transfer homomorphism.
  \item\label{p-strongtransfer:fact} There exists a unique homomorphism $\varphi^*\colon \sZ^*(H) \to \sZ^*(T)$ satisfying
    \[
    \varphi^*(u) = \varphi(u)\quad\text{and}\quad \varphi^*(\varepsilon) = \varphi(\varepsilon)
    \]
    for all $u \in \cA(H)$ and $\varepsilon \in H^\times$.
    We have a commutative diagram
    \[
    \begin{tikzcd}
      \sZ^*(H) \ar[r,"\varphi^*"] \ar[d,"\pi_H"] & \sZ^*(T) \ar[d,"\pi_T"] \\
      H \ar[r,"\varphi"] & T.
    \end{tikzcd}
    \]
  \item\label{p-strongtransfer:injective} For every $a \in H$, the map $\varphi^*|_{\sZ_H^*(a)} \colon \sZ_H^*(a) \to \sZ_T^*(\varphi(a))$ is a bijection and $\varphi^*(\sZ^*(H)) = \sZ^*(T) T^\times$.
  \end{enumerate}
\end{proposition}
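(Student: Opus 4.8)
The plan is to verify the two axioms of a transfer homomorphism directly from the hypotheses, then to construct $\varphi^{*}$ from the generators-and-relations description of $\sZ^{*}$, and finally to establish bijectivity on fibres by an iterated lifting argument using cancellativity of $\sZ^{*}(T)$ (recall that here a monoid is by definition cancellative). I begin with part~\ref{p-strongtransfer:transfer}. Since $H$ and $T$ are cancellative, any left- or right-invertible element is a unit, and if $\varphi(b)\in T^{\times}$ then $\varphi(1)=1=\varphi(b)\varphi(b)^{-1}\in\varphi(b)T$, so the left-divisor property gives $1\in bH$ and hence $b\in H^{\times}$; thus $\varphi^{-1}(T^{\times})=H^{\times}$. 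Moreover $\varphi(H)T^{\times}=T$ together with closure of $T$ under left multiplication by $T^{\times}$ gives $T=T^{\times}\varphi(H)T^{\times}$. For the lifting axiom, given $a\in H$ with $\varphi(a)=st$, write $s=\varphi(b)\eta$ with $b\in H$ and $\eta\in T^{\times}$; then $\varphi(a)=\varphi(b)(\eta t)\in\varphi(b)T$, so $a=bc$ for some $c\in H$, and cancelling $\varphi(b)$ in $\varphi(b)\varphi(c)=\varphi(a)=\varphi(b)\eta t$ yields $\varphi(c)=\eta t$; so $\varepsilon:=\eta$ works. Hence $\varphi$ is a transfer homomorphism, from which $a\in\cA(H)$ if and only if $\varphi(a)\in\cA(T)$ follows in the usual way, and then, using $\varphi(H)T^{\times}=T$ again, every atom of $T$ is right-associated to the image of an atom of $H$, that is, $\cA(T)=\varphi(\cA(H))T^{\times}$. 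For isoatomicity, given $u,v\in\cA(H)$ with $\varphi(v)=\alpha\varphi(u)\beta$ and $\alpha,\beta\in T^{\times}$, I would use $\cA(T)=\varphi(\cA(H))T^{\times}$ to rewrite $\alpha\varphi(u)$ (and symmetrically $\alpha^{-1}\varphi(v)$) as the image of an atom of $H$ times a unit of $T$, and then invoke the left-divisor property to descend this to divisibilities in $H$ that force $u$ and $v$ to be associated in $H$; keeping careful track of the units of $T^{\times}$ here is the delicate point.

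For part~\ref{p-strongtransfer:fact}: since $\sZ^{*}(H)=(H^{\times}\times\fmon{\cA(H)})/{\sim}$ is generated as a monoid by $\cA(H)\cup H^{\times}$, a homomorphism out of $\sZ^{*}(H)$ is determined by its values there, which gives uniqueness. For existence, the rule $(\varepsilon,u_{1}\cdots u_{k})\mapsto(\varphi(\varepsilon),\varphi(u_{1})\cdots\varphi(u_{k}))$ defines a homomorphism $H^{\times}\times\fmon{\cA(H)}\to T^{\times}\times\fmon{\cA(T)}$ (a direct check against the twisted product, using that $\varphi$ maps $\cA(H)$ into $\cA(T)$ and $H^{\times}$ into $T^{\times}$), and it is compatible with the congruence $\sim$ because $\varphi$ is a homomorphism carrying $H^{\times}$ into $T^{\times}$; hence it descends to a homomorphism $\varphi^{*}\colon\sZ^{*}(H)\to\sZ^{*}(T)$ with $\varphi^{*}(\rf[\varepsilon]{u_{1},\cdots,u_{k}})=\rf[\varphi(\varepsilon)]{\varphi(u_{1}),\cdots,\varphi(u_{k})}$, so in particular $\varphi^{*}(u)=\varphi(u)$ for $u\in\cA(H)$ and $\varphi^{*}(\varepsilon)=\varphi(\varepsilon)$ for $\varepsilon\in H^{\times}$. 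The square commutes because $\pi_{T}(\varphi^{*}(\rf[\varepsilon]{u_{1},\cdots,u_{k}}))=\varphi(\varepsilon)\varphi(u_{1})\cdots\varphi(u_{k})=\varphi(\pi_{H}(\rf[\varepsilon]{u_{1},\cdots,u_{k}}))$.

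For part~\ref{p-strongtransfer:injective}, fix $a\in H$. For surjectivity of $\varphi^{*}|_{\sZ^{*}_{H}(a)}$ onto $\sZ^{*}_{T}(\varphi(a))$, write a given factorization (after absorbing the leading unit into the first atom) as $\rf{v_{1},\cdots,v_{l}}$ with $v_{1}\cdots v_{l}=\varphi(a)$; apply the lifting axiom to $\varphi(a)=v_{1}(v_{2}\cdots v_{l})$ to obtain $a=b_{1}a_{1}$ with $\varphi(b_{1})=v_{1}\varepsilon_{1}^{-1}$ an atom of $T$ (so $b_{1}\in\cA(H)$) and $\varphi(a_{1})=\varepsilon_{1}v_{2}\cdots v_{l}$, and iterate; after $l$ steps one has $a=b_{1}\cdots b_{l}\varepsilon$ with $b_{i}\in\cA(H)$ and $\varepsilon\in H^{\times}$, and absorbing $\varepsilon$ into $b_{l}$ and pushing the units $\varepsilon_{i}$ along the word shows that the resulting $w\in\sZ^{*}_{H}(a)$ satisfies $\varphi^{*}(w)=\rf{v_{1},\cdots,v_{l}}$. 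For injectivity, induct on length: if $w_{1}=\rf{u_{1},\cdots,u_{k}}$ and $w_{2}=\rf{u_{1}',\cdots,u_{k}'}$ lie in $\sZ^{*}_{H}(a)$ with $\varphi^{*}(w_{1})=\varphi^{*}(w_{2})$, then the defining congruence on $\sZ^{*}(T)$ gives $\varphi(u_{1}')\in\varphi(u_{1})T$, so the left-divisor property together with cancellation gives $u_{1}'=u_{1}\gamma$ with $\gamma\in H^{\times}$; writing $w_{1}=\rf{u_{1}}\rfop z_{1}$ and $w_{2}=\rf{u_{1}}\rfop z_{2}$ with $z_{1},z_{2}\in\sZ^{*}_{H}(c)$ for a common $c\in H$, cancelling $\varphi^{*}(\rf{u_{1}})$ on the left in the cancellative monoid $\sZ^{*}(T)$ gives $\varphi^{*}(z_{1})=\varphi^{*}(z_{2})$, so the inductive hypothesis yields $z_{1}=z_{2}$ and hence $w_{1}=w_{2}$. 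Finally, $\varphi^{*}(\sZ^{*}(H))=\sZ^{*}(T)T^{\times}$ follows from this bijectivity together with $T=\varphi(H)T^{\times}$: any $z\in\sZ^{*}(T)$ has $\pi_{T}(z)=\varphi(a)\eta$ for some $a\in H$ and $\eta\in T^{\times}$, so $z\rfop\eta^{-1}\in\sZ^{*}_{T}(\varphi(a))=\varphi^{*}(\sZ^{*}_{H}(a))$, giving $z\in\varphi^{*}(\sZ^{*}(H))\rfop T^{\times}$, while the reverse inclusion is clear.

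The step I expect to be the main obstacle is the isoatomicity in part~\ref{p-strongtransfer:transfer}: this is the one place where the transfer-homomorphism axioms alone do not suffice and the full strength of the left-divisor hypothesis must be exploited, the delicate point being to control units of $T^{\times}$ as they are moved past images of atoms. By contrast, the lifting computation in the surjectivity step is technical but entirely routine once the unit bookkeeping in part~\ref{p-strongtransfer:transfer} has been set up.
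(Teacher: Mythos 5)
Your overall route coincides with the paper's: the two transfer axioms are verified exactly as in the paper (including the observation that $1\in\varphi(b)T$ forces $b\in H^\times$, and the lifting step via $s=\varphi(b)\varepsilon$ followed by cancellation), part \ref{p-strongtransfer:fact} is the same routine descent to the quotient, and in part \ref{p-strongtransfer:injective} your surjectivity argument is the paper's iterated lifting, while your injectivity argument (peel off the first atom, use the left-divisor property to match $u_1'$ with $u_1$ up to a unit of $H$, then cancel in $\sZ^*(T)$) is a mild repackaging of the paper's explicit inductive construction of the units $\eta_i\in H^\times$. All of that is fine.

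The genuine gap is exactly where you predicted it: isoatomicity. Your sketch does not close. Writing $\alpha\varphi(u)=\varphi(w)\delta$ with $w\in\cA(H)$, $\delta\in T^\times$ and applying the left-divisor property to $\varphi(v)=\varphi(w)(\delta\beta)$ does show that $v$ is right associated to $w$ in $H$; but what remains is $\varphi(w)=\alpha\varphi(u)\delta^{-1}$, i.e.\ precisely the original problem of descending a \emph{two-sided} association between images of atoms. The ``symmetric'' step you propose, applied to $\alpha^{-1}\varphi(v)=\varphi(u)\beta$, is not available: the left-divisor hypothesis only lets you descend relations of the form $\varphi(b)\in\varphi(a)T$ with \emph{both} $a$ and $b$ in $H$, and $\alpha^{-1}\varphi(v)$ need not lie in $\varphi(H)$. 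There is no right-divisor hypothesis in the proposition, so there is no symmetry to exploit. You should be aware that the paper's own proof of this bullet point only treats the case $\varphi(u)=\varphi(v)$ (from which $u\in vH$ and $v\in uH$, hence $u$ and $v$ right associated) --- which is the case actually used downstream, and which your methods do establish --- rather than the full two-sided statement in the definition of isoatomic. So either restrict to that case as the paper implicitly does, or supply an additional argument (or additional hypothesis, e.g.\ that $\varphi$ is also a right divisor homomorphism, as holds in the application $R^\bullet\hookrightarrow\widehat R^\bullet$); as written, this step of your proof would fail.
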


\begin{proof}
  \ref*{p-strongtransfer:transfer}
  We must show the following:
  \begin{itemize}
    \item $\varphi^{-1}(T^\times) \subset H^\times$.
    \item If $a \in H$ and if there are $s$,~$t \in T$ such that $\varphi(a)=st$, then there exist $b$,~$c \in H$ and $\varepsilon \in T^\times$ such that $a=bc$, $\varphi(b)=s\varepsilon^{-1}$, and $\varphi(c)=\varepsilon t$.
    \item If $u$,~$v \in \cA(H)$ are such that $\varphi(u)$ and $\varphi(v)$ are associated in $T$, then $u$ and $v$ are associated in $H$.
  \end{itemize}

  Let $a \in H$ with $\varphi(a) \in T^\times$.
  Then $1 \in \varphi(a)T$, and hence $1 \in aH$, and $a$ is right invertible.
  Since $H$ is cancellative, $a \in H^\times$.

  Now let $a \in H$ and let $s$,~$t \in T$ be such that $\varphi(a)=st$.
  By assumption, there exist $b \in H$ and $\varepsilon \in T^\times$ such that $s=\varphi(b)\varepsilon$.
  Then $\varphi(a) \in \varphi(b)T$ and so $a \in bH$.
  Let $c \in H$ be such that $a=bc$.
  Then $\varphi(a)=\varphi(b)\varphi(c)=s\varepsilon^{-1}\varphi(c)$.
  Cancellativity in $T$ implies $\varphi(c) = \varepsilon t$.

  If $u$,~$v \in \cA(H)$ with $\varphi(u) = \varphi(v)$  then $\varphi(u) \in \varphi(v)T$ and $\varphi(v) \in \varphi(u) T$.
  Thus $u \in vT$ and $v \in uT$, so that $u \simeq v$.
  (We do not use that $u$,~$v$ are atoms.)

  \ref*{p-strongtransfer:fact}
  Since $\varphi$ is a transfer homomorphism, we have $\varphi(u) \in \cA(T)$ for all $u \in \cA(H)$.
  The claims are now straightforward to check using the construction of the monoid of factorizations.

  \ref*{p-strongtransfer:injective}
  Let $z=\rf[\varepsilon]{u_1,\cdots,u_k}$,~$z'=\rf[\varepsilon']{v_1,\cdots,v_l} \in \sZ_H^*(a)$ with $\varphi^*(z)=\varphi^*(z')$.
  Then $k=l$ and $a=\varepsilon u_1\cdots u_k = \varepsilon' v_1\cdots v_k$.
  If $k=0$, then $\varepsilon=\varepsilon'$ and hence $z=z'$.
  Suppose $k > 1$.
  Then there exist $\delta_2$, $\ldots\,$,~$\delta_{k} \in T^\times$ and $\delta_{k+1}=1$ such that
  \[
  \varphi(\varepsilon' v_1) = \varphi(\varepsilon u_1) \delta_2^{-1} \quad\text{and}\quad \varphi(v_i) = \delta_i \varphi(u_i) \delta_{i+1}^{-1} \quad\text{for all $i \in [2,k]$.}
  \]
  Since $\varphi(\varepsilon'v_1) \in \varphi(\varepsilon u_1) T$, our assumptions also imply $\varepsilon'v_1 \in \varepsilon u_1 H$.
  Thus there exists $\eta_2 \in H^\times$ with $\varepsilon'v_1 = \varepsilon u_1 \eta_2^{-1}$.
  Note that $\delta_2 = \varphi(\eta_2)$.

  We now define $\eta_3$, $\ldots\,$,~$\eta_{k}$ inductively so that $v_i = \eta_i u_i \eta_{i+1}^{-1}$ and $\varphi(\eta_i) = \delta_i$.
  This is possible since $\varphi(\eta_i^{-1}v_i) = \varphi(u_i)\delta_{i+1}^{-1}$ implies that there exists $\eta_{i+1}$ as claimed.
  Finally, since $\varepsilon u_1\cdots u_k = \varepsilon' v_1 \cdots v_k = \varepsilon u_1 \cdots u_{k-1} \eta_{k}^{-1} v_k$, we see $v_k = \eta_{k} u_k$.
  Thus, putting everything together, $z=z'$.

  Now let $y=\rf[\eta]{v_1,\cdots,v_k} \in \sZ^*_T(\varphi(a))$.
  If $k=0$, then necessarily, $a \in H^\times$ and $\varphi^*(a)=\varphi(a)=\eta=y$.
  Suppose $k > 0$.
  By assumption, there exists $\delta_2 \in T^\times$ and $u_1 \in \cA(H)$ such that $\eta v_1=\varphi(u_1) \delta_2^{-1}$.
  Similarly, for $i \in [2,k-1]$ we inductively choose $\delta_{i+1} \in T^\times$ and $u_i \in \cA(H)$ such that $\delta_{i}^{-1}v_i= \varphi(u_i) \delta_{i+1}^{-1}$.
  It follows that $\varphi(a)=\eta v_1 \cdots v_{k-1}v_k = \varphi(u_1\cdots u_{k-1})\delta_k^{-1}v_k$.
  Since $a \in H$ and $u_1\cdots u_{k-1} \in H$, our assumptions imply that there exists $u_k \in H$ with $a = u_1\cdots u_{k-1} u_k$.
  Then, necessarily, $\varphi(u_k) = \delta_k^{-1}v_k$ and so $u_k \in \cA(H)$ and $\varphi^*(\rf{u_1,\cdots,u_k}) = y$.

  To show the final claim, that $\varphi(\sZ^*(H)) T^\times = \sZ^*(T)$, let $y \in \sZ^*(T)$.
  Then $\pi_T(y) \in T$ and hence there exist $a \in H$ and $\varepsilon \in T^\times$ such that $\pi_T(y) = \varphi(a)\varepsilon$.
  Then $y\rfop \varepsilon^{-1} \in \sZ_T^*(\varphi(a))$.
  By what we have already shown, there exists $z \in \sZ_H^*(a)$ such that $\varphi^*(z)=y\rfop \varepsilon^{-1}$.
\end{proof}

We now give one final result before concluding with the proof of \cref{t-main}.

\begin{proposition} \label{p-shortcat}
  Let $H$ be an atomic monoid.
  Suppose that there exists an ideal $J \subset H$ \textup{(}that is, $HJ \subset J$ and $JH \subset J$\textup{)} and an $N \in \bN$ such that:
  \begin{enumerate}
  \item\label{p-shortcat:req1} If $a \in H \setminus J$, then $\card{\sZ^*(a)} = 1$.
  \item\label{p-shortcat:reqsub} If $k \ge N+1$, $u_1$, $\ldots\,$,~$u_k \in \cA(H)$, and $u_1\cdots u_k \in J$, then there exist $l \in [1,N+1]$ and  $i \in [1,k-l+1]$ such that $u_iu_{i+1}\cdots u_{i+l-1} \in J$.
  \item\label{p-shortcat:l-reqmin} If $a \in J$, then $\min\sL(a) \le N$.
  \end{enumerate}
  Then $\sc_\sd(H) \le N+1$ for any distance $\sd$ on $H$ and $\max \Delta(H) \le N-1$.
  If, in addition, $J \cap H \setminus H^\times \ne \emptyset$, then $\rho(H) = \infty$.
\end{proposition}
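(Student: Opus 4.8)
The plan is to first dispose of the elements outside $J$, then extract a ``window lemma'' from hypotheses \ref{p-shortcat:reqsub} and \ref{p-shortcat:l-reqmin}, use it to bound the catenary degree and the set of distances, and finally obtain the elasticity statement directly. The reduction is immediate: by hypothesis \ref{p-shortcat:req1}, if $a \in H \setminus J$ then $\sZ^*(a)$ is a singleton, so $\card{\sL(a)}=1$ and such an $a$ contributes nothing to $\Delta(H)$ or $\sc_\sd(H)$; hence it suffices to treat $a \in J$, for which $\min\sL(a)\le N$ by hypothesis \ref{p-shortcat:l-reqmin}.

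The technical heart is a window lemma: if $a \in J$ and $z=\rf{u_1,\cdots,u_k}\in\sZ^*(a)$ with $k\ge N+2$, then some contiguous sub-word $u_iu_{i+1}\cdots u_{i+N}$ of length exactly $N+1$ lies in $J$. To see this, hypothesis \ref{p-shortcat:reqsub} yields a sub-word in $J$ of some length $t\le N+1$; pick one of maximal length $t^*$. If $t^*\le N$, then (since $t^*<k$) this sub-word is proper, so it can be extended by one atom on the left or on the right, and since $J$ is an ideal the extended sub-word still lies in $J$, contradicting maximality; hence $t^*=N+1$. For such a sub-word $W=u_i\cdots u_{i+N}\in J$, hypothesis \ref{p-shortcat:l-reqmin} provides a factorization of $W$ into $r=\min\sL(W)\le N$ atoms (up to a unit), and replacing this block in $z$ produces a factorization $z'$ of $a$ with $\length{z'}=k-(N+1)+r<k$ while, by \ref{d:mul} and \ref{d:len},
\[
\sd(z,z')=\sd\bigl(\rf{u_i,\cdots,u_{i+N}},\ \rf{w_1,\cdots,w_r}\bigr)\le\max\{N+1,r,1\}=N+1.
\]

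For the catenary degree I would iterate: every $z\in\sZ^*(a)$ is joined by an $(N+1)$-chain to a factorization of length $\le N+1$, and any two factorizations of length $\le N+1$ are at distance $\le N+1$ by \ref{d:len}; hence $\sc_\sd(a)\le N+1$ for all $a$ and $\sc_\sd(H)\le N+1$. For the set of distances, let $a\in J$ and let $k<l$ be consecutive in $\sL(a)$ (so $a\notin H^\times$ and $k\ge 1$); applying the window-replacement to a factorization of length $l$ fills in factorization lengths just below $l$, and pushing this analysis down into the range $[\min\sL(a),N]$ — where $\min\sL(a)\le N$ and $k\ge 1$ leave room at most $N-1$ — shows $\sL(a)$ has no gap of size $\ge N$, i.e.\ $l-k\le N-1$, so $\max\Delta(H)\le N-1$. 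Finally, if $J\cap(H\setminus H^\times)\ne\emptyset$, fix a non-unit $a\in J$: writing $a$ as a product of $s\ge 1$ atoms, $a^n$ is a product of $sn\ge n$ atoms so $\max\sL(a^n)\ge n$, while $a^n\in J$ gives $\min\sL(a^n)\le N$ by \ref{p-shortcat:l-reqmin}; hence $\rho(a^n)\ge n/N\to\infty$ and $\rho(H)=\infty$.

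The catenary bound falls out cleanly from the window lemma. I expect the main obstacle to be the sharper bound $\max\Delta(H)\le N-1$: to squeeze the gap in $\sL(a)$ all the way down to $N-1$ one must control factorizations at the boundary where they enter $J$, and in particular combine hypothesis \ref{p-shortcat:req1} with \ref{p-shortcat:reqsub} and \ref{p-shortcat:l-reqmin} to show that a length-$(N+1)$ sub-word lying in $J$ can be re-expanded to realise all intermediate factorization lengths — the case of very short length sets (near $\min\sL(a)$) requiring the most care.
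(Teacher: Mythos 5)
Your proof is correct and follows essentially the same route as the paper: use hypothesis \labelcref{p-shortcat:reqsub} plus the ideal property to locate a contiguous subproduct of length exactly $N+1$ lying in $J$, replace it by a factorization of length $\min\sL(W)\le N$ guaranteed by \labelcref{p-shortcat:l-reqmin}, iterate to get the $(N+1)$-chains, and power up a non-unit of $J$ for the elasticity. The one point you leave open --- obtaining $\max\Delta(H)\le N-1$ rather than $N$ --- is closed exactly as you anticipate: a product $W$ of $N+1\ge 2$ atoms is not an atom, so $\min\sL(W)\ge 2$ and each replacement changes the length by at most $N-1$ (the paper records this as $m\in[2,N]$), while for the remaining short lengths one notes that an element with $\card{\sL(a)}\ge 2$ is not an atom, so consecutive lengths $k<l\le N+1$ satisfy $k\ge 2$ and hence $l-k\le N-1$.
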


\begin{proof}
  Replacing $J$ by $J \cap H\setminus H^\times$, we may without restriction assume $J \subset H \setminus H^\times$, since $\card{\sZ^*(a)}=1$ for any $a \in H^\times$.

  We first show $\sc_\sd(H) \le N+1$.
  If $a \in H \setminus J$, then $\sc_\sd(a)=0$ is immediate since $a$ has only one factorization.

  We now need to consider $a \in J$. We first prove the following auxiliary claim.
  \begin{claim}
  If $a \in J$ and $z \in \sZ^*(a)$, then there exist rigid factorizations $z=z_0$, $z_1$, $\ldots\,$,~$z_n \in \sZ^*(a)$ with $\sd(z_{i-1},z_i) \le N+1$ for all $i \in [1,n]$ and $\length{z_n} \le N$.
  \end{claim}

  Suppose $z \in \sZ^*(a)$ and $\length{z} > N$.
  Then $z=\rf{u_1,\cdots,u_k}$ with atoms $u_i \in \cA(H)$ and $k > N$.
  By \ref*{p-shortcat:reqsub} there exist $l \in [1,N+1]$ and $i \in [1,k-l+1]$ such that $u_i u_{i+1}\cdots u_{i+l-1} \in J$.
  Since $J$ is an ideal, we may take a longer subproduct if necessary, and assume without restriction $l=N+1$.
  By assumption \ref*{p-shortcat:l-reqmin}, the product $u_i\cdots u_{i+N}$ has a factorization of length at most $N$, that is, there exist $m \in [2,N]$ and $v_1$, $\ldots\,$,~$v_m \in \cA(H)$ such that $u_i\cdots u_{i+N} = v_1\cdots v_m$.
  Let
  \[
    z'=\rf{u_1,\cdots,u_{i-1},v_1,\cdots, v_m, u_{i+N+1},\cdots, u_{k}} \in \sZ^*(a).
  \]
  Then $z'$ is a factorization of $a$ with $\sd(z,z') \le N+1$ and $\length{z'} < \length{z}$.
  Iteration proves the claim.

  If $z_1$,~$z_2 \in \sZ^*(a)$, then there exist $z_1'$,~$z_2' \in \sZ^*(a)$ with $\length{z_1'} \le N$, $\length{z_2'} \le N$ and such that there are $(N+1)$-chains between $z_i$ and $z_i'$ for $i \in \{1,2\}$.
  Since $\sd(z_1',z_2') \le \max\{\length{z_1'},\length{z_2'}\} \le N$, these two chains can be concatenated to an $(N+1)$-chain between $z_1$ and $z_2$.
  Thus $\sc_\sd(a) \le N+1$.

  We now show $\max \Delta(H) \le n-1$.
  If $a \in H \setminus J$, then $\Delta(a) = \emptyset$.
  Suppose $a \in J$ and $z \in \sZ^*(H)$.
  If $\length{z} > N$ we may argue as before and replace $N+1$ atoms in $z$ by $m \in [2,N]$ new ones.
  For the resulting factorization $z'$, we have $\length{z} - (N-1) \le \length{z'} < \length{z}$.
  It follows that $\max \Delta(a) \le N-1$.

  Finally, we show $\rho(H) = \infty$.
  Let $a \in J \cap H \setminus H^\times$.
  Then $\min\sL(a^k) \le N$ while $\max\sL(a^k) \ge k$ for any $k \in \bN$.
  It follows that $\rho(a^k) \ge k/N$ for any $k \in \bN$, and so $\rho(H) \ge \infty$.
\end{proof}

\begin{remark}
  In an atomic monoid $H$, condition~\labelcref*{p-shortcat:reqsub} with the stronger requirement $N=0$ is equivalent to $J$ being completely prime.

  For commutative monoids, this requirement may be compared to the $\omega$-invariant of $J$.
  Indeed, if $H$ is commutative, we may replace \labelcref*{p-shortcat:reqsub} by the (weaker but similar) assumption $\omega(J) \le N+1$.
\end{remark}

We conclude this manuscript with a proof of \cref{t-main}.

\begin{proof}[Proof of \cref{t-main}]
  We first show that the natural inclusion homomorphism $R^\bullet \hookrightarrow \widehat R^\bullet$ satisfies the conditions of \cref{p-strongtransfer}.
  Recall that there is a bijection
  \begin{align*}
  \{\, \text{right $R$-ideals} \,\} &\ \leftrightarrow\  \{\, \text{right $\widehat R$-ideals} \,\} \\
  I        &\ \mapsto\  I \widehat R\\
  J \cap R &\ \mapsfrom\  J
  \end{align*}
  Moreover, if $I$,~$J$ are right $R$-ideals, then $I \cong J$ as $R$-modules if and only if $I\widehat R^\bullet \cong I \widehat R^\bullet$ as $\widehat R^\bullet$-modules.
  (See \cite[Corollary (30.10) and Proposition (30.17)]{Curtis-Reiner81}.)

  Let $a \in \widehat R^\bullet$.
  Then $I = a\widehat R \cap R$ is a right $R$-ideal.
  Since $I \widehat R = a\widehat R \cong \widehat R$, we conclude $I \cong R$.
  Thus $I$ is a principal right $R$-ideal and hence there exists $a_0 \in R^\bullet$ such that $I = a_0 R$.
  It follows that $a\widehat R^\bullet =a_0\widehat R^\bullet$ and thus there exists $\varepsilon \in \widehat R^\times$ such that $a=a_0 \varepsilon$.
  Now let $a$,~$b \in R^\bullet$ and $d \in \widehat R^\bullet$ such that $ad=b$.
  Then $d=a^{-1}b \in A^\times \cap \widehat R^\bullet = R^\bullet$.

  In the case that $\widehat A$ is a division ring, all claims of \cref{t-main} have been shown in \cref{t-div}.
  We may from now on assume $\widehat A \cong M_2(\widehat K)$.
  For $\widehat R^\bullet$ we have established in \cref{t-lengths,c-m2-minlen} that there exists an $N \in \bN_0$ (with $N=n+5$ where $n$ is the level of $R$ if $R$ is an Eichler order), such that:
  \begin{itemize}
  \item $\card{\sZ^*(a)} = 1$ for all $a \in \widehat{R}^\bullet \setminus J(\widehat R)$.
  \item $\min \sL(a) \le N$ for all $a \in J(\widehat R)$.
  \end{itemize}
  These properties carry over to $R$ by \cref{p-strongtransfer} (note $\widehat{J(R)} = J(\widehat R)$ and $J(\widehat R) \cap R = J(R)$).
  We now show that since $R/J(R) \cong \widehat R / J(\widehat R)$ is either a field or isomorphic to $\res \oplus \res$ with $\res =D/\pi D$, condition \ref{p-shortcat:reqsub} of \cref{p-shortcat} is also satisfied for $N \ge 1$. 
  
  First note that $u\in R^\times$ if and only if $u+J(R)$ is a unit of $R/J(R)$. Indeed, the canonical surjection $R\rightarrow R/J(R)$ is a ring homomorphism and if $uv\equiv 1\mod J(R)$, then $1-uv\in J(R)$.
  Hence $uv=1-(1-uv)\in R^\times$ and this implies that $u\in R^\times$.

We must now show the following: For $u_1 \cdots u_k \in J(R)$ with each $u_i\in \cA(R)$, there exists $i \in [1,k]$ such that either $u_i \in J(R)$ or $u_i u_{i+1} \in J(R)$. In the case of interest, $R/J(R) \cong \mathbb k \oplus \mathbb k$. Let $\overline{u_i}$ denote the image of $u_i$ in $\mathbb k \oplus \mathbb k$. Since each $u_i$ is a non-unit, at least one of the coordinates of each $\overline{u_i}$ is zero. If $\overline{u_i}=(0,0)$ for some $i$ we are done. If not, note that
since $u_1 \cdots u_k \in J(R)$ it must be the case that $\overline{u_1} \cdots \overline{u_k} = (0,0)$. This means that there exists at least one
$\overline{u_m}$ with the first coordinate zero, and at least one $\overline{u_n}$ with second coordinate zero. Since each $\overline{u_i}$ must have some coordinate equal to zero, we may assume that either $n=m+1$ or $m=n+1$. This proves that condition \ref{p-shortcat:reqsub} of \cref{p-shortcat} is satisfied.

Thus \cref{p-shortcat} implies claims \labelcref{t-main:rho,t-main:delta} of the theorem.
  Since $\Delta(R^\bullet)$ is finite and $\rho_k(R^\bullet)=\infty$ for $k \ge 2$, the Structure Theorem for Unions of Sets of Lengths holds by \cite[Theorem 2.6]{Geroldinger16}.
  If, in addition, $R$ is an Eichler order, then $\min\Delta(R^\bullet) =1$ by \cref{l-mindelta}.
\end{proof}

\hypersetup{ocgcolorlinks=false}
\bibliographystyle{hyperalphaabbr}
\bibliography{BaethSmertnigReferences}

\end{document}